\documentclass[12pt]{amsart}
\usepackage{amssymb,amsmath,amscd}
\marginparwidth 0pt   \marginparsep 0pt
\oddsidemargin -0.1in \evensidemargin 0pt
\topmargin -.3in
\textwidth 6.5in
\textheight 8.5in

\newtheorem{thm}{\sc Theorem}[section]
\newtheorem{prop}[thm]{\sc Proposition}

\newtheorem{cor}[thm]{\sc Corollary}
\theoremstyle{definition}\newtheorem{exa}[thm]{\sc Example}
\theoremstyle{definition}\newtheorem{de}[thm]{\sc Definition}
\theoremstyle{definition}\newtheorem{rem}[thm]{\sc Remark}
\theoremstyle{definition}

\numberwithin{equation}{section}

\DeclareMathOperator{\R}{\mathbb R}

\DeclareMathOperator{\C}{\mathbb C}

\begin{document}
\title[Embeddability for CR-Manifolds]{Embeddability for three-Dimensional Cauchy-Riemann Manifolds and CR Yamabe invariants}
\author[S. Chanillo, H.-L.~Chiu, P. Yang]{Sagun Chanillo, Hung-Lin~Chiu and Paul Yang}

\address{S. Chanillo, Department of Mathematics, Rutgers University, 110 Frelinghuysen Rd., Piscataway, NJ 08854, U.S.A.}
\email{chanillo@math.rutgers.edu}

\address{H.-L. Chiu, Department of Mathematics, National Central University, Chung Li, 32054, Taiwan, R.O.C.}
\email{hlchiu@math.ncu.edu.tw}

\address{P. Yang, Department of Mathematics, Princeton University, Princeton, NJ 08544, U.S.A.}
\email{Yang@math.princeton.edu}

%$\thanks{Email: hlchiu@math.sinica.edu.tw}
\keywords{}\subjclass{32V30, 32V20.}
\renewcommand{\subjclassname}{ AMS
Classification number\textup{(2010)}}
\keywords{Embedding, CR manifold, Paneitz operator, CR Yamabe.}

\begin{abstract}
Let $M^{3}$ be a closed CR 3-manifold. In this paper, we derive a Bochner formula for the Kohn Laplacian in which the pseudohermitian
torsion doesn't play any role. By means of this formula we show that the nonzero eigenvalues of the Kohn Laplacian have a lower bound,
provided that the CR Paneitz operator is nonnegative and the Webster curvature is positive.
This means that $M^{3}$ is embeddable when the CR Yamabe constant is positive and the CR Paneitz operator is nonnegative. Our lower bound estimate is
sharp. In addition, we show that the embedding is stable in the sense of Burns and Epstein. Lastly we show that the CR Paneitz operator for embeddable CR
structures given by polynomial deformations and close to the standard CR structure on $S^3$ is positive on the subspace of spherical harmonics
$\oplus_{p\geq 1} H_{p,0}\oplus H_{0,p}$.
\end{abstract}

\maketitle

\section{Introduction,Statements and Notation}
It is well known that the embedding problem remains open for three
dimensional CR manifolds. In contrast, in the higher dimensional
case, any strictly pseudoconvex, closed CR manifold can always be
realized as an embedding in some $\C^{n}$ (see \cite{BD}). In the
3-dimensional case, there exists nonembeddable examples \cite{AS,R},
and in general generic CR structures are not embeddable (see
\cite{B} and \cite{BE}).

In his paper \cite{Le1}, L. Lempert asked two fundamental questions about the embeddability problem.
The first one is related to the closedness property of
CR structures and the second one is to the stability property. In this paper we address Lempert's questions.

Throughout this paper, we will use the notations and terminology in (\cite{L}) unless otherwise specified.
Let $(M,J,\theta)$ be a closed three-dimensional pseudo-hermitian manifold,
where $\theta$ is a contact form and $J$ is a CR structure compatible with the contact bundle $\xi=\ker\theta$.
The CR structure $J$ decomposes $\bf \C\otimes\xi$ into the direct sum of $T_{1,0}$ and $T_{0,1}$ which are eigenspaces of $J$ with respect to $i$ and $-i$, respectively.
The Levi form $\left\langle\  ,\ \right\rangle_{L_\theta}$ is the Hermitian form on $T_{1,0}$ defined by $\left\langle Z,W\right\rangle_{L_\theta}=-i\left\langle d\theta,Z\wedge\overline{W}\right\rangle$.
We can extend $\left\langle\  ,\ \right\rangle_{L_\theta}$ to $T_{0,1}$ by defining $\left\langle\overline{Z} ,\overline{W}\right\rangle_{L_\theta}=\overline{\left\langle Z,W\right\rangle}_{L_\theta}$ for all $Z,W\in T_{1,0}$.
The Levi form induces naturally a Hermitian form on the dual bundle of $T_{1,0}$, denoted by $\left\langle\  ,\ \right\rangle_{L_\theta^*}$,
and hence on all the induced tensor bundles. Integrating the hermitian form (when acting on sections) over $M$ with respect to the volume form $dV=\theta\wedge d\theta$,
we get an inner product on the space of sections of each tensor bundle.
We denote the inner product by the notation $\left\langle \ ,\ \right\rangle$. For example
\begin{equation}\label{21}
\left\langle\varphi ,\psi\right\rangle=\int_{M}\varphi\bar{\psi}\ dV,
\end{equation}
for functions $\varphi$ and $\psi$.

Let $\left\{T,Z_1,Z_{\bar{1}}\right\}$ be a frame of $TM\otimes \bf C$, where $Z_1$ is any local frame of $T_{1,0},\  Z_{\bar{1}}=\overline{Z_1}\in T_{0,1}$ and $T$ is the characteristic vector field, that is, the unique vector field such that $\theta(T)=1,\ d\theta(T,\cdot)=0$. Then $\left\{\theta,\theta^1,\theta^{\bar 1}\right\}$, the coframe dual to $\left\{T,Z_1,Z_{\bar{1}}\right\}$, satisfies
\begin{equation}\label{22}
d\theta=ih_{1\bar 1}\theta^1\wedge\theta^{\bar 1}
\end{equation}
for some positive function $h_{1\bar 1}$. We can always choose $Z_1$
such that $h_{1\bar 1}=1$; hence, throughout this paper, we assume
$h_{1\bar 1}=1$

  The pseudohermitian connection of $(J,\theta)$ is the connection
$\nabla$ on $TM\otimes \bf C$ (and extended to tensors) given in terms of a local frame $Z_1\in T_{1,0}$ by

\begin{equation*}
\nabla Z_1=\theta_1{}^1\otimes Z_1,\quad
\nabla Z_{\bar{1}}=\theta_{\bar{1}}{}^{\bar{1}}\otimes Z_{\bar{1}},\quad
\nabla T=0,
\end{equation*}

where $\theta_1{}^1$ is the $1$-form uniquely determined by the following equations:

\begin{equation}\label{id10}
\begin{split}
d\theta^1&=\theta^1\wedge\theta_1{}^1+\theta\wedge\tau^1\\
\tau^1&\equiv 0\mod\theta^{\bar 1}\\
0&=\theta_1{}^1+\theta_{\bar{1}}{}^{\bar 1},
\end{split}
\end{equation}

where $\theta_{1}{}^{1}$ and $\tau^1$ are called the connection form and the pseudohermitian torsion, respectively.
Put $\tau^1=A^1{}_{\bar 1}\theta^{\bar 1}$.
The structure equation for the pseudohermitian connection is

\begin{equation*}
d\theta_1{}^1=R\theta^1\wedge\theta^{\bar 1}
+2iIm(A^{\bar{1}}{}_{1,\bar{1}}\theta^1\wedge\theta),
\end{equation*}

where $R$ is the Tanaka-Webster curvature.

We will denote components of covariant derivatives with indices preceded by a comma; thus we write $A^{\bar{1}}{}_{1,\bar{1}}\theta^1\wedge\theta$.
The indices $\{0, 1, \bar{1}\}$ indicate derivatives with respect to $\{T, Z_1, Z_{\bar{1}}\}$.
For derivatives of a scalar function, we will often omit the comma,
for instance, $\varphi_{1}=Z_1\varphi,\  \varphi_{1\bar{1}}=Z_{\bar{1}}Z_1\varphi-\theta_1^1(Z_{\bar{1}})Z_1\varphi,\  \varphi_{0}=T\varphi$ for a (smooth) function.

Next we consider several natural differential operators occuring in this paper. For a detailed description, we refer the reader to the article \cite{L}.
For a smooth function $\varphi$, the Cauchy-Riemann operator $\partial_{b}$ can be defined locally by
\[\partial_{b}\varphi=\varphi_{1}\theta^{1},\]
and we write $\bar{\partial}_{b}$ for the conjugate of $\partial_{b}$. A function $\varphi$ is called CR holomorphic if $\bar{\partial}_{b}\varphi=0$.
the divergence operator $\delta_b$ takes $(1,0)$-forms to functions by $\delta_b(\sigma_{1}\theta^{1})=\sigma_{1,}{}^{1}$,
and similarly, $\bar{\delta}_b(\sigma_{\bar 1}\theta^{\bar 1})=\sigma_{\bar 1,}{}^{\bar 1}$.

If $\sigma =\sigma_{1}\theta^{1} $ is compactly supported, Stokes' theorem applied to the 2-form $\theta\wedge\sigma $ implies the divergence formula:
\[\int_{M}\delta_{b}\sigma \theta\wedge d\theta=0.\]
It follows that the formal adjoint of $\partial_{b}$ on functions with respect to the Levi form and the volume element $\theta\wedge d\theta$ is
$\partial_{b}^{*}=-\delta_{b}$.
The Kohn Laplacian on functions determined by $\theta$ is
\[\Box_{b}=2\bar{\partial}_{b}^{*}\bar{\partial_{b}},\]

Define $P\varphi=(\varphi_{\bar{1}}{}^{\bar{1}}{}_1+iA_{11}\varphi^1)\theta^1$ (see \cite{L})
which is an operator that characterizes CR-pluriharmonic functions,
and $\overline{P}\varphi=(\varphi_{1}{}^{1}{}_{\bar{1}}-iA_{\bar{1}\bar{1}}\varphi^{\bar{1}})\theta^{\bar{1}}$, the conjugate of $P$.
The CR Paneitz operator $P_0$ is defined by

\[ P_0\varphi=\delta_b(P\varphi).\]

More explicitly,
\begin{equation*}
\begin{split}
P_{0}f&=\frac{1}{4}(\Box_{b}\overline{\Box}_{b}f-4i(A^{11}f_{1})_{1})\\
&=\frac{1}{8}\big((\overline{\Box}_{b}\Box_{b}+\Box_{b}\overline{\Box}_{b})f+8Im(A^{11}f_{1})_{1}\big).
\end{split}
\end{equation*}
It follows that $P_{0}$ is a real and symmetric operator.

\begin{de}
The Paneitz operator $P_{0}$ is nonnegative if
\[\int_{M}(P_{0}\varphi)\bar{\varphi}\geq 0,\]
for all smooth functions $\varphi$.
\end{de}
Note that the nonnegativity of $P_{0}$ is a CR invariant in the
sense that it is independent of the choice of the contact form
$\theta$. This follows by observing that if
$\widetilde{\theta}=e^{2f}\theta$ be another contact form, we have
the following transformation laws for the volume form and the CR
Paneitz operator respectively (see Lemma 7.4 in \cite{H}):
\[\widetilde{\theta}\wedge d\widetilde{\theta}=e^{4f}\theta\wedge d\theta;\ \ \ \ \ \  \widetilde{P_{0}}=e^{-4f}P_{0}.\]

We also observe that when the Webster torsion $A_{11}\equiv 0$, then the Paneitz operator $P_{0}$ is given by,
\[P_{0}=\frac{1}{4}\Box_{b}\overline{\Box}_{b}.\]
Thus the vanishing of torsion implies that $P_{0}\geq 0$. We also recall that the vanishing of torsion is equivalent to $L_{T}J=0$ where $L$
is the Lie derivative.

In the higher dimensional case, there exists an analog of $P_{0}$
which satisfies the covariant property. In this case, Graham and
Lee, in \cite{GL}, had shown the nonnegativity of $P_{0}$.

\begin{de}
Suppose that $\widetilde{\theta}=e^{2f}\theta$. The CR Yamabe constant is defined by\\
$\inf_{\widetilde{\theta}}{\{\int_{M}\widetilde{R}\ \widetilde{\theta}\wedge d\widetilde{\theta} : \int\widetilde{\theta}\wedge d\widetilde{\theta}=1\}}$.
\end{de}
The CR Yamabe constant is a CR invariant. We are now in a position to describe our main theorems. In this paper we show

\begin{thm}\label{main1}
Let $M^{3}$ be a closed CR manifold. If $P_{0}\geq 0$ and $R>0$, then the non-zero eigenvalues $\lambda$ of $\Box_{b}$ satisfy $\lambda\geq\min{R}$, hence the range of $\Box_{b}$
is closed. If $P_{0}\geq 0$ and the CR Yamabe constant $>0$, then $M^{3}$ can be embedded into $\C^{n}$, for some $n$.
\end{thm}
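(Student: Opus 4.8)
The plan is to establish the spectral gap for $\Box_b$ first, and then invoke the well-known criterion of Kohn–Burns relating closedness of the range of $\Box_b$ to embeddability. The spectral estimate itself should follow from a Bochner-type identity for $\Box_b$ --- precisely the torsion-free Bochner formula advertised in the abstract. Concretely, I would start from an eigenfunction $\Box_b u = \lambda u$ with $\lambda \ne 0$, write out $\|\bar\partial_b \bar\partial_b u\|^2$ and $\|\bar\partial_b u\|^2$ in terms of covariant derivatives, and commute derivatives using the structure equations \eqref{id10} and the definition of $R$. The commutation terms naturally produce a term involving the Webster curvature $R$ and a term involving the torsion $A_{11}$; the key point is that the torsion terms can be assembled, after integration by parts, into exactly the Paneitz quantity $\int_M (P_0 u)\bar u$ (using the explicit formula $P_0 f = \frac14(\Box_b\overline{\Box}_b f - 4i(A^{11}f_1)_1)$). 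Thus one arrives at an identity schematically of the form
\begin{equation*}
\|\nabla_b^2 u\|^2 \;=\; (\text{positive quantity in }\lambda) \;-\; \int_M R\,|\bar\partial_b u|^2 \;+\; c\int_M (P_0 u)\bar u,
\end{equation*}
from which $P_0 \ge 0$ lets us drop the last term and $R \ge \min R > 0$ forces $\lambda \ge \min R$.

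The main obstacle --- and the technical heart of the argument --- is organizing the integration by parts so that the torsion contributions combine \emph{exactly} into $\langle P_0 u, u\rangle$ with the correct sign, rather than leaving residual torsion terms of indefinite sign. This is delicate because the naive Bochner computation for the sub-Laplacian famously carries torsion terms (this is why the CR Yamabe and CR Obata-type problems are hard), and the whole novelty here is that the \emph{second-order} Bochner formula for $\Box_b$ on a $3$-manifold is torsion-free modulo the Paneitz term. I would handle this by carefully tracking each $A_{11}$ and $A_{\bar1\bar1}$ term through the commutation of $Z_1, Z_{\bar1}$ and integration by parts, repeatedly using $\theta$-parallel frames and the divergence formula $\int_M \delta_b\sigma \,\theta\wedge d\theta = 0$; reconciling the answer with the two displayed expressions for $P_0 f$ gives a consistency check on the bookkeeping.

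Once the eigenvalue bound $\lambda \ge \min R$ is in hand, closedness of the range of $\Box_b$ is immediate: the spectrum of $\Box_b$ on the orthogonal complement of $\ker\Box_b$ is bounded away from $0$, so $\Box_b$ restricted there is invertible with bounded inverse, hence has closed range; therefore $\Box_b$ itself has closed range. By the Kohn–Burns theorem (closed range of $\Box_b$ on functions is equivalent to embeddability of $M^3$ into some $\C^n$), embeddability follows. Finally, to pass from the hypothesis $R>0$ to the hypothesis that the CR Yamabe constant is positive, I would use the conformal invariance already recorded in the excerpt: the nonnegativity of $P_0$ is a CR invariant, and positivity of the Yamabe constant means there exists a conformal contact form $\widetilde\theta = e^{2f}\theta$ with $\widetilde R > 0$ (solving the CR Yamabe equation, or just using that positive Yamabe constant yields a positive-curvature representative in dimension $3$). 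Applying the first part of the theorem to $(M, J, \widetilde\theta)$ gives closed range of $\widetilde\Box_b$, hence embeddability, and embeddability is a property of the CR structure $J$ alone, independent of the contact form. The one point to be careful about is that solvability of the CR Yamabe problem in the positive case on a general closed $3$-manifold should be cited (Jerison–Lee, Gamara–Yacoub), or alternatively one argues directly that positivity of the Yamabe constant together with $P_0\ge 0$ already suffices to run a variant of the Bochner estimate without needing the optimal conformal representative.
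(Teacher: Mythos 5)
Your proposal follows essentially the same route as the paper: a torsion-free Bochner identity for $\Box_b$ whose torsion contributions assemble (after integration by parts) into $\int_M (P_0\varphi)\bar\varphi$, yielding $\lambda\geq\min R$ for nonzero eigenvalues, then Kohn's closed-range criterion for embeddability, and finally a conformal change to a positive-curvature contact form combined with the CR invariance of $P_0\geq 0$. The one step you elide is that passing from the bound on nonzero \emph{eigenvalues} to an actual spectral gap (hence closed range) requires knowing that the spectrum of $\Box_b$ in $(0,\infty)$ consists only of point eigenvalues, for which the paper cites Burns--Epstein; otherwise the argument matches.
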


\begin{rem}
The fact $\Box_{b}$ has closed range is equivalent to global embedding is a result of Kohn (\cite{K})
\end{rem}
In section 3 we prove the stability theorem:

\begin{thm}\label{main2}
Under (\ref{se1}), (\ref{se4}), (\ref{se5}), the embedding is stable which means that if $|t|$ is small enough then the CR embedding $\Psi_{t}$ is close to $\Psi_{0}$.
\end{thm}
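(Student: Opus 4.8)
The plan is to combine the uniform spectral gap for the Kohn Laplacian coming from Theorem~\ref{main1} with Kohn's closed-range regularity theory for $\bar\partial_b$, producing the deformed embeddings as small perturbations of $\Psi_0$. Write $J_t$ for the deformation family in (\ref{se1}), $J_0$ for the reference embeddable structure, and choose the contact form $\theta$ (smoothly in $t$ if necessary, using positivity of the CR Yamabe constant) so that $R_t>0$ for all $|t|\leq t_0$; let $\Box_b^{t}=2\bar\partial_b^{t*}\bar\partial_b^{t}$ be the Kohn Laplacian on functions of $(J_t,\theta)$. Hypotheses (\ref{se4}) and (\ref{se5}) are arranged so that $P_0^{t}\geq 0$ and $\min_M R_t$ is bounded below by a positive constant independent of $t$. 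Hence Theorem~\ref{main1} applies to each $J_t$: it is embeddable, $\Box_b^{t}$ has closed range, and every non-zero eigenvalue of $\Box_b^{t}$ is at least $c:=\inf_{|t|\leq t_0}\min_M R_t>0$. Denote by $S_t$ the orthogonal (Szeg\H{o}) projection onto $\ker\Box_b^{t}=\ker\bar\partial_b^{t}$, the CR functions of $J_t$.

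Because each $\Box_b^{t}$ has closed range, Kohn's regularity theory provides a canonical solution operator $K_t$ for $\bar\partial_b^{t}$: if $g\in\operatorname{Im}\bar\partial_b^{t}$, then $K_t g$ is the unique solution of $\bar\partial_b^{t}v=g$ orthogonal to $\ker\bar\partial_b^{t}$, and $K_t$ (like $S_t$) is bounded on every Sobolev space $H^s(M)$. Since the $J_t$ form a smooth family on the compact $M$ with a $t$-independent spectral gap $c$, these estimates are uniform: there are constants $C_s$ independent of $t$ with $\|K_t g\|_{H^s}\leq C_s\|g\|_{H^s}$ for all $g\in\operatorname{Im}\bar\partial_b^{t}$.

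Now write the reference embedding as $\Psi_0=(\psi_0^1,\dots,\psi_0^N)\colon M\hookrightarrow\C^N$, where each $\psi_0^j$ is smooth and CR for $J_0$, i.e.\ $\bar\partial_b^{0}\psi_0^j=0$. Put $g_t^j:=\bar\partial_b^{t}\psi_0^j$; since $\bar\partial_b^{t}$ depends smoothly on $t$ and coincides with $\bar\partial_b^{0}$ at $t=0$, applying it to the fixed smooth function $\psi_0^j$ gives $\|g_t^j\|_{H^s}=O(|t|)$ for every $s$. As $g_t^j\in\operatorname{Im}\bar\partial_b^{t}$, the function $v_t^j:=K_t g_t^j$ is smooth with $\|v_t^j\|_{H^s}\leq C_s\|g_t^j\|_{H^s}=O(|t|)$, and
\[
\psi_t^j:=\psi_0^j-v_t^j
\]
obeys $\bar\partial_b^{t}\psi_t^j=\bar\partial_b^{t}\psi_0^j-g_t^j=0$, so $\psi_t^j$ is a smooth CR function for $J_t$ (indeed $\psi_t^j=S_t\psi_0^j$). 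Set $\Psi_t:=(\psi_t^1,\dots,\psi_t^N)\colon M\to\C^N$. Then $\|\psi_t^j-\psi_0^j\|_{H^s}=\|v_t^j\|_{H^s}=O(|t|)$ for all $s$, so by Sobolev embedding $\Psi_t\to\Psi_0$ in $C^\infty(M,\C^N)$, hence in $C^1$. Since $\Psi_0$ is a smooth embedding of the compact manifold $M$ and the set of embeddings is open in the $C^1$ topology, $\Psi_t$ is a CR embedding of $(M,J_t)$ into $\C^N$ for all sufficiently small $|t|$, and it is $C^\infty$-close to $\Psi_0$ --- the asserted stability, in the sense of Burns and Epstein.

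The hard part is the uniformity in $t$, at two places. First, one must verify that (\ref{se4}) and (\ref{se5}) propagate along the deformation with bounds independent of $t$, so that Theorem~\ref{main1} yields a $t$-independent spectral gap $\lambda\geq c>0$ for the non-zero spectrum of $\Box_b^{t}$; this relies on the smoothness of the family and the continuous dependence of the Webster curvature, the CR Paneitz operator, and the CR Yamabe constant on the deformation parameter. Second, one must establish the closed-range regularity estimates of Kohn --- boundedness of $K_t$ and $S_t$ on each $H^s$ --- with constants uniform in $|t|\leq t_0$; here the smooth dependence of $\bar\partial_b^{t}$ on $t$ together with the uniform $L^2$ gap are the key inputs, but the argument has to be run with care. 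The remaining steps are routine: the reduction of CR embeddability for $J_t$ to solving $\bar\partial_b^{t}v=\bar\partial_b^{t}\psi_0^j$ with Sobolev estimates, the elementary bound $\|\bar\partial_b^{t}\psi_0^j\|_{H^s}=O(|t|)$ coming from $\bar\partial_b^{0}\psi_0^j=0$, and the openness of the embedding condition.
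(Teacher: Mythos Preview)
Your proposal is correct and follows the same broad strategy as the paper: use Theorem~\ref{main1} with (\ref{se4})--(\ref{se5}) to obtain a uniform spectral gap $\lambda_1(\Box_b^{t})\geq\nu>0$, deduce uniform control on the Szeg\H{o} projector, and upgrade $L^2$ closeness to $C^\infty$ closeness by subelliptic regularity. The difference is in how the deformed embedding is produced. You take the direct route: project the reference embedding componentwise, $\psi_t^j=S_t\psi_0^j=\psi_0^j-K_t(\bar\partial_b^{t}\psi_0^j)$, and use uniform Sobolev bounds on Kohn's solution operator $K_t$ to get $\psi_t^j-\psi_0^j=O(|t|)$ in every $H^s$. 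The paper instead re-runs Boutet de Monvel's peak-function construction for each $t$, so $h_t=S_t(\psi e^{-\tau\varphi_p})$ for a fixed auxiliary function independent of $t$; for the key estimate it invokes the resolvent formula $S_t=\int_{|\lambda|=\nu/2}(\Box_b^{t}-\lambda)^{-1}\,d\lambda$ together with Burns--Epstein's identity (5.58) to cite their bound $\|S_t-S_0\|_{L^2}\leq C\nu A\|\varphi_t-\varphi_0\|_{L^\infty}$, and then uses the equation $\Box_b^{t}(h_t-h_0)=(\Box_b^{0}-\Box_b^{t})h_0$ with subelliptic regularity. Your approach is more self-contained and sidesteps the Boutet de Monvel machinery; the paper's approach gets the crucial uniform $L^2$ estimate on $S_t-S_0$ by direct citation rather than by asserting uniform Kohn estimates. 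In both cases the ``hard part'' you flag --- uniformity in $t$ of the closed-range estimates --- is ultimately delegated to the literature, so your honesty about what remains to be checked matches the level of detail in the paper.
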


\begin{rem}
If $M$ is embeddable then the Paneitz operator $P_{0}$ has closed range (see \cite{CC}).
\end{rem}

We now turn our attention to section 4, where we show theonem \ref{main1} has a conuerse. We shall also compare our result with those of Burns-Epstein \cite{BE} and Bland \cite{B}.          .
Let $(M,J,\theta )$ be a CR structure. Let $\phi $ be a complex valued smooth function on M, such that $\|\phi\|_{\infty} <1$.
For  $\theta$ fixed consider a deformation of the CR structure given by
\[Z_{\bar{1}}^{\phi} =Z_{\bar{1}} + \phi Z_{1}.\]
Our first order of business in section 4 is to compute in generality the connecfion forms, torsion forms and Webster-Tanaka curvature
for the deformed structure.
Now we specialize the situation to $S^3$ and consider small deformations of the standand CR structure of the sphere.
In particular our goal is to consider the deformed structure on $S^3$ given by,
\[Z_{\bar{1}}^{t}=Z_{\bar{1}}^{\phi_t} = F(Z_{\bar{1}}+t\phi Z_{1}),\]
where $F=(1-t^{2}|\phi|^{2})^{-1/2}$, $Z_{\bar{1}}= \bar{z} _{2}\frac{\partial}{\partial Z_{1}}-\bar{z}_{1}\frac{\partial}{\partial Z_{2}}$
and $t\in (-\epsilon,\epsilon )$.
The factor $F$ is introduced to normalize the Levi form so that $h_{1\bar{1}}\equiv 1$. For this structure we compute the deformed Paneitz operator $P_{0}^{t}$.
The main goal in Section 4 is to study the variations of $P_{0}^{t}$.
We now consider the 3-sphere $S^{3}\subset\C^{2}\ni (z_{1},z_{2})$ and denote by
\[P_{p,q}=\textrm{span} \{ z_{1}^{a} z_{2}^{b} \bar{z}_{1}^{c} \bar{z}_{2}^{d}| a+b=p,\ c+d=q\} \]
and the spherical harmonics
\[H_{p,q}=\{f \in P_{p,q} |-\Delta_{s^{3}}f=(p+q)(p+q+2)f\}.\]
 For a given $\phi\in C^{\infty }(S^{3})$ one has the Fourier representation
\[\phi \sim \sum \phi _{pq}\]
where $\phi _{pq}$ is the projection of $\phi $ onto $H_{p,q}$.

\begin{de}
We say $\phi $ satisfies condition (BE) if and only if
\[\phi _{pq}\equiv 0\  \textrm{for}\ \  p<q+4,\ q=0,1,\cdots.\]
\end{de}

\begin{rem}
Since for $p>q$
\begin{equation*}
P_{p,q}=H_{p,q}\oplus\cdots\oplus H_{p-q,0}.
\end{equation*}
It follows that if $\phi\in P_{p,q}$, then $\phi$ satisfies (BE) if and only if $p\geq q+4$.
\end{rem}

Burns and Epstein proved in \cite{BE} that for $t\in (-\epsilon ,\epsilon )$ and $\phi $ satisfying (BE) the CR structure embeds into some $\C^{n}$.
Conversely Bland \cite{B} showed that embeddability of a CR structure close to the standard structure on $S^{3}$
implies condition (BE). To summarize we have \\

{\bf Theorem }[\ {\bf Burns-Epstein-Bland}].
A CR structure close to the standard structure on $S^3$ is embeddable if and only if $\phi$
satisfies condition (BE).\\

We define the space
 \[{\bf H}=C^{\infty}(S^3)\cap\left(\oplus_{p\geq 1}H_{p,0}\oplus H_{0,p}\right). \]
The main result proved in section 4 is

\begin{prop}\label{main3}
Let $\phi\in P_{p_{1},q_{1}}$. For a CR structure given by deformation by $\phi$ and close to the standard structure on $S^3$, i.e., $t\in(-\epsilon ,\epsilon )$,
the associated CR Paneitz operator is positive on ${\bf H}$, provided $\phi$ satisfies (BE).
\end{prop}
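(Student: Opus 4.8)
The plan is to regard $P_0^t$ as a perturbation of the standard Paneitz operator $P_0^0$ and to expand the quadratic form $Q_t(\psi):=\langle P_0^t\psi,\psi\rangle$, restricted to $\mathbf H$, to second order in $t$. On the standard sphere the pseudohermitian torsion vanishes, so $P_0^0=\tfrac14\Box_b^0\overline{\Box}_b^0$; since $\Box_b^0$ annihilates $\bigoplus_pH_{p,0}$ (the CR--holomorphic functions, i.e.\ $P_{p,0}=H_{p,0}$) and $\overline{\Box}_b^0$ annihilates $\bigoplus_pH_{0,p}$, we have $P_0^0\equiv0$ on $\mathbf H$, and $\overline{\Box}_b^0$ acts on $H_{p,0}$ by the positive scalar $\nu_p=-2Z_{\bar1}Z_1\big|_{H_{p,0}}=2\|Z_1\,\cdot\,\|^2/\|\cdot\|^2$ ($p\ge1$). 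Because the deformation keeps $\theta$ fixed and is normalized so that $h_{1\bar1}^t\equiv1$, the volume form $\theta\wedge d\theta$, the inner product, and the mutual orthogonality of the $H_{p,q}$ are all unchanged. Thus the content of the Proposition is that $Q_t$, which vanishes identically at $t=0$, has vanishing first variation and strictly positive second variation on $\mathbf H$ precisely when $\phi$ obeys (BE).

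The first step is to show $Q_t$ has no $O(t)$ term on $\mathbf H$. For $\varphi\in H_{p,0}$ one has $\varphi_{\bar1}^t=Z_{\bar1}^t\varphi=tF\phi\,\varphi_1$, so $\bar\partial_b^t\varphi=O(t)$; using $\langle P_0^t\varphi,\psi\rangle=-\langle P^t\varphi,\partial_b^t\psi\rangle$, where $P^t\varphi$ is the coefficient of $\theta^{1,t}$ obtained from the deformed third covariant derivative plus $iA_{11}^t\varphi^{1,t}$ (and $\varphi^{1,t}=h^{1\bar1,t}\varphi_{\bar1}^t=O(t)$, while $A_{11}^t=O(t)$, so the torsion contribution is $O(t^2)$), the $O(t)$ part of $\langle P_0^t\varphi,\varphi'\rangle$ for $\varphi,\varphi'\in\bigoplus_pH_{p,0}$ equals $-\langle Z_1Z_1(\phi\varphi_1),\varphi'_1\rangle$, which after integration by parts ($Z_1^{*}=-Z_{\bar1}$ on $S^3$) is $-\langle\phi\varphi_1,(Z_{\bar1})^2\varphi'_1\rangle$; this vanishes because $Z_{\bar1}Z_1\varphi'$ again lies in $\bigoplus_pH_{p,0}$, which $Z_{\bar1}$ kills. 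The conjugate argument handles $\bigoplus_pH_{0,p}$, while the mixed (holomorphic/anti-holomorphic) blocks are $O(t^2)$ for free, since $Z_1^t\psi=tF\bar\phi\,Z_{\bar1}\psi+O(t^2)$ when $\psi\in\bigoplus_pH_{0,p}$. Hence $Q_t=t^2Q^{(2)}+O(t^3)$ on $\mathbf H$, and by conjugation symmetry of $P_0$ it is enough to control $Q^{(2)}$ on $\bigoplus_pH_{p,0}$ together with the mixed blocks.

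The heart of the argument is the evaluation and sign of $Q^{(2)}$. For $\varphi\in H_{p,0}$, from $\overline{\Box}_b^t\varphi=\nu_p\varphi+O(t)$, the self-adjointness $\langle\Box_b^t\overline{\Box}_b^t\varphi,\varphi\rangle=\langle\overline{\Box}_b^t\varphi,\Box_b^t\varphi\rangle$, and $\langle\Box_b^t\varphi,\varphi\rangle=2\|\bar\partial_b^t\varphi\|^2=2t^2\|\phi\varphi_1\|_{L^2}^2+O(t^3)$, the Kohn--Laplacian part of $P_0^t$ contributes $\tfrac{\nu_p}{2}\|\phi\varphi_1\|_{L^2}^2$ to $Q^{(2)}(\varphi)$; feeding in the computations of Section~4 for the deformed connection and the torsion $A_{11}^t$ (whose first-order part is a fixed multiple of $t\bar\phi$, the constant depending on $p_1-q_1$ and the Webster curvature $R$), one obtains
\[
Q^{(2)}(\varphi)=\tfrac{\nu_p}{2}\|\phi\varphi_1\|_{L^2}^2+\mathcal C(\varphi),
\]
with $\mathcal C$ a correction built from the first variations of $\overline{\Box}_b^t$, of the coframe and connection form, and from the torsion. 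Now $\varphi_1=Z_1\varphi\in H_{p-1,1}$, so every bidegree $H_{a,b}$ occurring in $\phi\varphi_1$ has $a-b=(p_1-q_1)+p-2\ge p+2>0$, because (BE) means $p_1-q_1\ge4$; on these ``$a>b$'' spaces $\overline{\Box}_b^0$ dominates $\Box_b^0$, and the scalar governing the first-order torsion (of the form $p_1-q_1-2R$) is $\ge0$. Propagating these sign facts through $\mathcal C$, and using the same degree bookkeeping on the $O(t^2)$ mixed blocks, I expect $\mathcal C(\varphi)\ge-\tfrac{\nu_p}{4}\|\phi\varphi_1\|_{L^2}^2$ (and similarly on $\bigoplus_pH_{0,p}$), whence $Q^{(2)}\ge\tfrac{\nu_p}{4}\|\phi\varphi_1\|_{L^2}^2>0$ on $\mathbf H\setminus\{0\}$ --- here $\phi\varphi_1\ne0$ since $\phi\not\equiv0$ has nontrivial $z$-content when $p_1\ge4$ and $\varphi_1\ne0$ for $0\ne\varphi\in H_{p,0}$, $p\ge1$. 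Since everything is real--analytic in $t$ with bounds polynomial in the mode number, the $O(t^3)$ remainder does not overturn this for $|t|<\epsilon$ small, so $\langle P_0^t\psi,\psi\rangle>0$ for every $0\ne\psi\in\mathbf H$.

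The main obstacle is exactly the explicit computation of $\mathcal C(\varphi)$: one must carry all first- and second-order data of the deformation --- the corrections to $\bar\partial_b$, to $\overline{\Box}_b$, to the coframe, and to the connection form and the torsion $A_{11}^t$ --- through two integrations by parts and organize the outcome so that its sign is visible, tracking at each stage which spherical-harmonic bidegrees occur. The role of condition (BE) throughout is precisely to guarantee $p_1-q_1\ge4$, i.e.\ $a-b\ge p+2$ for every relevant $H_{a,b}$: this is simultaneously what makes the diagonal $O(t^2)$ form positive, what annihilates the off-diagonal $O(t)$ couplings, and what constrains the $O(t^2)$ mixed blocks --- so that dropping (BE) is expected to let $Q^{(2)}$, hence $P_0^t$ on $\mathbf H$, become negative, matching the Burns--Epstein--Bland obstruction.
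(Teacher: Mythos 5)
Your overall strategy is the same as the paper's: since $P_0^0$ annihilates $\mathbf H$, expand $I_t(f)=\langle P_0^tf,f\rangle$ in $t$, show the first variation vanishes on $\mathbf H$, and show the second variation is positive there when $\phi$ satisfies (BE). Your first-variation argument (everything hitting a CR-holomorphic test function picks up a factor that integrates to zero after moving a $Z_{\bar1}$ across) is essentially the paper's Propositions \ref{apfvf}--\ref{apfvf1}. However, the heart of the proof is the second variation, and there your write-up is not a proof: the key inequality $\mathcal C(\varphi)\ge-\tfrac{\nu_p}{4}\|\phi\varphi_1\|^2$ is introduced with ``I expect,'' and the sign mechanism you sketch for it (bidegree bookkeeping, ``$\overline\Box_b^0$ dominates $\Box_b^0$ on $a>b$,'' a torsion scalar ``$p_1-q_1-2R\ge0$'') is not the mechanism that actually makes the form positive, and I do not see how to turn it into one. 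In particular, nothing in your argument controls the off-diagonal couplings between different $H_{k,0}$ and $H_{l,0}$ (and between the holomorphic and antiholomorphic blocks) at order $t^2$; these do not vanish for degree reasons alone, and the hypothesis $\phi\in P_{p_1,q_1}$ is used in the paper precisely to kill them.

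What the paper actually does at this stage is: (i) compute $\ddot P_0^t|_{t=0}$ explicitly from the deformed connection and torsion (Proposition \ref{fsva}); (ii) split it as $8D^2+R$ and prove $\langle D^2f,f\rangle\ge0$ on $\ker P_0$ by exhibiting it as a complete square (Proposition \ref{dsp}); (iii) evaluate $\langle Rf,f\rangle$ by integration by parts to get $2\sum_{k,l}\int(k|\phi|^2-E\bar\phi)f^k_1\overline{f^l_1}+\cdots$ with $E=4\phi+i\phi_0$ (Propositions \ref{apsvf}, \ref{main5}); and (iv) pass to the Hopf fibration and do Fourier analysis in the fiber variable, which shows that for $\phi\in P_{p_1,q_1}$ the $k\ne l$ terms integrate to zero and the diagonal terms equal $(k+p_1-q_1-4)\int|\phi|^2|f^k_1|^2$, so (BE), i.e.\ $p_1\ge q_1+4$, gives strict positivity. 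Steps (ii)--(iv) are exactly the ``explicit computation of $\mathcal C$'' that you defer, and the appearance of the operator $E$ and of the fiber Fourier multiplier $k+(p_1-q_1)-4$ is where condition (BE) genuinely enters; it is not visible from the bidegree count $a-b\ge p+2$ alone. As written, your proposal identifies the correct framework but leaves the decisive step unproved.
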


The proposition above is partial information about the behavior of
the CR Paneitz operator near the standard CR structure on $S^3$. We
will return to a fuller investigation in a later paper.

 We now outline the strategy of proof of Proposition
\ref{main3}. Since CR pluriharmonic functions are annihilated by the
Paneitz operator, we will study for $f\in {\bf H}$, the quadratic
form

\begin{equation}\label{qudrf}
I_{t}(f)=<P_{0}^{t}f, f>.
\end{equation}

We show in Theorem \ref{apfvf1} that for $f\in {\bf H}$ and $\phi\in C^{\infty}(S^3)$,

\begin{equation}\label{fvofqu}
\dot{I}_{t}(f)|_{t=0}=\frac{d}{dt}I_{t}(f)|_{t=0}\equiv 0.
\end{equation}
Thus the first variation all vanish. The (BE) condition does not
appear in the first variation formula, but appears in the second
variation formula. To compute the second variation, that is to
compute $\ddot{P}_{0}^{t}|_{t=0}$, we perform a Morse decomposition
of the functional $\ddot{I}_{t}$. It splits into a stable part,
called $D^{2}$ which we handle via Proposition \ref{dsp} and an
unstable part which is handled by proposition \ref{apsvf}. The (BE)
condition enters naturally into the unstable part by means of an
expressions $E$,
\[E=4\phi+i\phi_{0},\ \ \ \phi_{0}=T\phi.\]
In fact if $\phi$ satisfies (BE) then there is no unstable part.

Writing $f\in{\bf H}$ as
\[f=\sum_{k\geq 1}f^{k}+\sum_{k\geq 1}g^{k},\ \ f^{k}\in H_{k,0},\ \ g^{k}\in H_{0,k},\]
we get by throwing away the stable part:

\begin{prop}\label{main5}
For any $\phi\in C^{\infty}(S^3),\ f\in{\bf H}$,
\begin{equation*}
\begin{split}
\ddot{I}_{t}(f)|_{t=0}&=\frac{d^2}{dt^2}I_{t}(f)|_{t=0}\\
&\geq 2\sum_{k,l}\int_{S^3}(k|\phi|^{2}-E\bar{\phi})f^{k}_{1}\overline{f^{l}_{1}}+2\sum_{k,l}\int_{S^3}(k|\phi|^{2}-E\bar{\phi})g^{k}_{\bar 1}\overline{g^{l}_{\bar 1}}.
\end{split}
\end{equation*}
\end{prop}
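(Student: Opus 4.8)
The plan is to compute the second variation of $I_{t}(f)$ directly. Since $\theta$ is held fixed throughout the deformation, the volume form $dV=\theta\wedge d\theta$ --- and with it the $L^{2}$ pairing $\langle\ ,\ \rangle$ --- does not depend on $t$, so $I_{t}(f)=\langle P_{0}^{t}f,f\rangle$ and therefore
\[
\ddot{I}_{t}(f)|_{t=0}=\langle \ddot{P}_{0}^{t}|_{t=0}\,f,\,f\rangle .
\]
Here $\ddot{P}_{0}^{t}|_{t=0}$ is obtained by differentiating twice, at $t=0$, the expression for the deformed Paneitz operator $P_{0}^{t}$ found earlier in this section from the deformation formulas for the connection form, the torsion and the Webster curvature of $Z_{\bar{1}}^{t}=F(Z_{\bar{1}}+t\phi Z_{1})$; note that $F=1$, $\dot{F}|_{0}=0$ and $\ddot{F}|_{0}=|\phi|^{2}$ at $t=0$, so the factor $|\phi|^{2}$ appears in the second variation. (The first-variation identity of Theorem \ref{apfvf1} is proved by the same differentiation scheme.)

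Next I would split $f$ into its CR holomorphic and CR anti-holomorphic parts: $f=u+v$ with $u=\sum_{k\ge 1}f^{k}$, $f^{k}\in H_{k,0}$, and $v=\sum_{k\ge 1}g^{k}$, $g^{k}\in H_{0,k}$. On the standard $S^{3}$ the pseudohermitian torsion vanishes, the Webster curvature is a positive constant, the Reeb field $T$ acts on $H_{p,q}$ as multiplication by $i(p-q)$, and $f^{k}$ (resp.\ $g^{k}$) is CR holomorphic (resp.\ CR anti-holomorphic) and an eigenfunction of $\overline{\Box}_{b}$ (resp.\ $\Box_{b}$) with eigenvalue proportional to $k$. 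Substituting these identities into $\langle \ddot{P}_{0}^{t}|_{t=0}f,f\rangle$ collapses the bulk of the terms: the mixed $u$--$v$ contributions should cancel, and the quadratic form should split into a functional in the $f^{k}$'s plus the conjugate functional in the $g^{k}$'s. This reduction is the longest part of the argument.

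The final step is the Morse decomposition of the surviving functional. I would group the holomorphic part as $D^{2}+U$, where $D^{2}$ is the ``stable'' piece, nonnegative on ${\bf H}$ by Proposition \ref{dsp}, and $U$ is the ``unstable'' remainder. After integrating by parts and using the eigenvalue relations on $S^{3}$ --- in particular that $k$ is, up to a fixed constant, the eigenvalue of $\overline{\Box}_{b}$ on $H_{k,0}$ --- Proposition \ref{apsvf} identifies $U$ with $2\sum_{k,l}\int_{S^{3}}(k|\phi|^{2}-E\bar{\phi})f^{k}_{1}\overline{f^{l}_{1}}$, where $E=4\phi+i\phi_{0}$; the same decomposition, conjugated, applies to the anti-holomorphic part (with $\Box_{b}$ in place of $\overline{\Box}_{b}$) and gives $2\sum_{k,l}\int_{S^{3}}(k|\phi|^{2}-E\bar{\phi})g^{k}_{\bar 1}\overline{g^{l}_{\bar 1}}$. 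Discarding the two nonnegative terms $D^{2}$ yields the asserted inequality.

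I expect the main obstacle to be the second-variation computation itself, for two reasons. First, the frame $Z_{1}^{t}$ moves with $t$, so the meaning of the subscripts in $f_{1}$, $\overline{\Box}_{b}$, $A_{11}$, $R$, and so on changes along the deformation; one must differentiate the structure equations carefully and track which quantities are $t$-dependent. Second, $\ddot{P}_{0}^{t}|_{t=0}$ has no manifest positivity before the decomposition, so the split $\ddot{I}_{0}=D^{2}+U$ is not forced: the correct grouping into a nonnegative square plus the displayed double sum has to be found by hand and then checked against Propositions \ref{dsp} and \ref{apsvf}.
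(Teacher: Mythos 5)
Your proposal is correct and follows essentially the same route as the paper: one writes $4\ddot{P}_{0}^{t}|_{t=0}=8D^{2}+R$ as in (\ref{deofse}), discards the nonnegative term $\langle D^{2}f,f\rangle$ via Proposition \ref{dsp} (applicable since $f\in{\bf H}\subset\ker P_{0}$), and evaluates $\tfrac14\langle Rf,f\rangle$ on the decomposition $f=\sum f^{k}+\sum g^{k}$ using Proposition \ref{apsvf}, whose part (a) and the reality of $R$ kill the mixed terms exactly as you anticipate. The only cosmetic difference is that in the paper $D^{2}$ is a single real operator acting on all of $f$ (its quadratic form is one perfect square mixing $u$ and $v$), rather than two conjugate pieces attached separately to the holomorphic and anti-holomorphic parts.
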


We now invoke the Hopf fibration to perform the integration in the
right side of the theorem above. Viewing $S^3$ as a $S^1$ fibration
over $CP^{1}$ reduces the computation to doing Fourier series on
$(-\pi, \pi)$. This is the content of Proposition \ref{keyf}. This
proposition works for general $\phi$ if $k=l$, but we have been
unable to do the integration when $k\neq l$ unless $\phi\in
P_{p_{1},q_{1}}$. We get

\begin{prop}\label{main6}
For any $\phi\in P_{p_{1},q_{1}},\ f\in{\bf H}$,
\begin{equation*}
\ddot{I}_{t}(f)|_{t=0}
\geq 2\sum_{k}\int_{S^3}(k+p_{1}-q_{1}-4)|\phi|^{2}(|f^{k}_{1}|^{2}+|g^{k}_{\bar 1}|^{2}).
\end{equation*}
\end{prop}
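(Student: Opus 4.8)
The plan is to feed the inequality of Proposition~\ref{main5} into the Hopf--fibration computation of Proposition~\ref{keyf} and to observe that, when $\phi\in P_{p_{1},q_{1}}$, this computation collapses onto a single diagonal sum with the asserted coefficient. The mechanism behind Proposition~\ref{keyf} is that on the standard $S^{3}$ every object in sight is homogeneous under the circle action of the Hopf fibration $S^{1}\hookrightarrow S^{3}\to CP^{1}$, whose generating field is (a multiple of) the Reeb field $T$: each $P_{p,q}$ consists of $T$--eigenfunctions with $T\psi=i(p-q)\psi$ for $\psi\in P_{p,q}$; the adapted frame can be chosen so that $Z_{1}$ shifts the Hopf weight by a fixed amount $c$ (equivalently $[T,Z_{1}]$ is a constant multiple of $Z_{1}$, which holds because the standard structure is torsion--free); and $dV=\theta\wedge d\theta$ is $T$--invariant. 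Hence $\int_{S^{3}}\Psi\,dV=0$ whenever $\Psi$ has nonzero weight --- integrating $\Psi$ along a Hopf circle produces $\int_{-\pi}^{\pi}e^{imt}\,dt=0$ for $m\neq0$ --- and this is exactly the Fourier reduction used in Proposition~\ref{keyf}.

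Specializing to $\phi\in P_{p_{1},q_{1}}$, one has $\phi_{0}=T\phi=i(p_{1}-q_{1})\phi$, so
\[
k|\phi|^{2}-E\bar{\phi}=k|\phi|^{2}-(4\phi+i\phi_{0})\bar{\phi}=(k-4)|\phi|^{2}-i\phi_{0}\bar{\phi}=(k+p_{1}-q_{1}-4)\,|\phi|^{2},
\]
a constant multiple of $|\phi|^{2}$, which is $T$--invariant. Substituting into Proposition~\ref{main5}, the $(k,l)$ term of the first sum is $(k+p_{1}-q_{1}-4)\int_{S^{3}}|\phi|^{2}f^{k}_{1}\overline{f^{l}_{1}}$, whose integrand carries weight $(k+c)-(l+c)=k-l$, and the $(k,l)$ term of the second sum is $(k+p_{1}-q_{1}-4)\int_{S^{3}}|\phi|^{2}g^{k}_{\bar1}\overline{g^{l}_{\bar1}}$, with integrand of weight $l-k$. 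By the previous paragraph all of these vanish unless $k=l$, so only the diagonal survives and
\[
\ddot{I}_{t}(f)|_{t=0}\ \geq\ 2\sum_{k}(k+p_{1}-q_{1}-4)\int_{S^{3}}|\phi|^{2}\bigl(|f^{k}_{1}|^{2}+|g^{k}_{\bar1}|^{2}\bigr),
\]
which is the claimed estimate. (When $\phi$ moreover satisfies (BE) one has $p_{1}\geq q_{1}+4$, so every coefficient $k+p_{1}-q_{1}-4\geq k\geq1$ is positive; this is the input to Proposition~\ref{main3}.)

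The step needing care is the homogeneity bookkeeping on the standard sphere: one must check that the adapted coframe $\{\theta,\theta^{1}\}$ can be chosen so that $Z_{1}$ is genuinely a weight vector for the Hopf action --- i.e.\ that $[T,Z_{1}]$ is a (purely imaginary) constant multiple of $Z_{1}$, which follows from $L_{T}J=0$ on the homogeneous $S^{3}$ --- and that the $T$--eigenvalue normalizations are tracked carefully enough that the surviving coefficient is $k+p_{1}-q_{1}-4$ and not its reflection. It should also be stressed where the hypothesis $\phi\in P_{p_{1},q_{1}}$ enters. For a general $\phi=\sum_{p,q}\phi_{pq}$ the term $E\bar\phi$ is not $T$--invariant: it is a sum of pieces $\phi_{pq}\overline{\phi_{p'q'}}$ of weight $(p-q)-(p'-q')$, so $\int_{S^{3}}E\bar\phi\,f^{k}_{1}\overline{f^{l}_{1}}$ can be nonzero for many pairs $k\neq l$, and the off--diagonal quadratic form in the $f^{k}_{1}$ can no longer be discarded. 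This is precisely the obstruction flagged before the statement, and restricting $\phi$ to a single bidegree is what removes it.
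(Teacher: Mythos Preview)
Your proof is correct and follows the same route as the paper: apply Proposition~\ref{main5}, then use the Hopf--fibration Fourier argument of Proposition~\ref{keyf} (which you essentially re-derive inline) to compute $k|\phi|^{2}-E\bar\phi=(k+p_{1}-q_{1}-4)|\phi|^{2}$ and kill the off-diagonal terms by weight. The paper's own proof is simply the one-line citation of these two propositions, and your expansion of the bookkeeping ($\phi_{0}=i(p_{1}-q_{1})\phi$, the $T$-weight of $f^{k}_{1}\overline{f^{l}_{1}}$ being $k-l$, etc.) matches what is carried out in Propositions~\ref{exse} and~\ref{keyf}.
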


We emphasize that the conclusion of Proposition \ref{main6} holds
for even those $\phi\in P_{p_{1},q_{1}}$ for which $p_{1}<q_{1}+4$,
i.e., for those values of $p_{1}, q_{1}$ that fail to satisfy
condition (BE).

If however $\phi$ satisfies (BE), then it is evident for $k\geq 1,\ \ k+p_{1}-q_{1}-4\geq 1$ and thus from the above theorem $\ddot{I}_{t}(f)|_{t=0}>0$,
for $f\in{\bf H}$. Combining this fact with (\ref{fvofqu}) and since $t\in(-\epsilon ,\epsilon )$ we see readily that if $\phi$ satisfies (BE), then for
$t\in(-\epsilon ,\epsilon )$, $f\in {\bf H}$, we have
\begin{equation}
I_{t}(f)=<P_{0}^{t}f,f>\ >0.
\end{equation}

We also point out some other results in section 4 of independent interest. One such result is Corollary \ref{cotofr},
which states if $\phi\in P_{p_{1},q_{1}}$, with $p_{1}=q_{1}+4$, then the deformed structure on $S^3$ also has zero torsion and conversely if $\phi$
is a homogeneous polynomial in $P_{p_{1},q_{1}}$, then for precisely those for which $p_{1}=q_{1}+4$, the new torsion will also vanish.

Lastly we comment that our second variation formula shows that for $\phi\in P_{p_{1},q_{1}}$ and $f\in H_{p,0}$ or $f\in H_{0,p}$, the possible negative
direction of $\ddot{I}_{t}(f)|_{t=0}$ can only lie in the space $f\in H_{p,0}$ or $f\in H_{0,p}$ for $p<q_{1}+4-p_{1}$. A more careful computation of
the second variation of Paneitz operator for Rossi's example $\phi\equiv 1$, which we have chosen not to display, shows that the negative directions are
given exactly by the functions $f=z_{1}, z_{2}, \bar{z}_{1}, \bar{z}_{2}$.

{\bf Acknowledgment.} The first author's research was supported in
part by NSF grant DMS-0855541, the second author's research was
supported in part by CIZE Foundation
 and in part by NSC 96-2115-M-008-017-MY3, and the third author's research was supported in part by
NSF grant DMS-0758601.

\section{The Embedding Criterion}\label{sec:1}
In this section, we will derive the Bochner formula for the Kohn Laplacian. We need some commutation relations, for which we refer the reader to Lee's paper \cite{L}.
This formula contains no term related to pseudohermitian torsion. In this sense it seems to be more natural than the one for the sublaplacian.
We have the following Bochner formula:
\begin{prop}\label{procombf}
For any complex-valued function $\varphi$ ,we have
\begin{equation}\label{combf}
\begin{split}
-\frac{1}{2} \Box _{b} | \bar{\partial} _{b} \varphi |^{2}&=
(\varphi _{\bar{1}\bar{1}}\bar{\varphi}_{11}+\varphi_{\bar{1}1}\bar{\varphi}_{1\bar{1}})\\
&-\frac{1}{2}<\bar{\partial}_{b}\varphi,\bar{\partial}_{b}\Box_{b}\varphi>
- <\bar{\partial}_{b}\Box_{b}\varphi,\bar{\partial}_{b}\varphi>\\
&-<\bar{P}\varphi,\bar{\partial}_{b}\varphi> + R | \bar{\partial}_{b}\varphi |^{2}\\
\end{split}
\end{equation}
\end{prop}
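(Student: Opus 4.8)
The plan is to prove (\ref{combf}) as a pointwise identity: expand the left side by the Leibniz rule for the pseudohermitian connection, and then remove the resulting third covariant derivatives using the commutation relations of Lee \cite{L}. Here the brackets in (\ref{combf}) denote the pointwise Levi pairing of $(0,1)$-forms, so that with $h_{1\bar 1}=1$ one has $|\bar\partial_b\varphi|^2=\varphi_{\bar 1}\bar\varphi_1$, and on functions $\Box_b u=-2u_{\bar 1}{}^{\bar 1}=-2u_{\bar 1 1}$; in particular $\Box_b\varphi=-2\varphi_{\bar 1 1}$, so $\bar\partial_b\Box_b\varphi=-2\varphi_{\bar 1 1\bar 1}\theta^{\bar 1}$.

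I would first set $u:=|\bar\partial_b\varphi|^2=\varphi_{\bar 1}\bar\varphi_1$ and compute, by two applications of the Leibniz rule,
\[
-\tfrac12\,\Box_b u \;=\; u_{\bar 1 1}\;=\;\varphi_{\bar 1\bar 1 1}\,\bar\varphi_1\;+\;\varphi_{\bar 1\bar 1}\,\bar\varphi_{11}\;+\;\varphi_{\bar 1 1}\,\bar\varphi_{1\bar 1}\;+\;\varphi_{\bar 1}\,\bar\varphi_{1\bar 1 1}.
\]
The two middle terms are already the first line of (\ref{combf}); and since $\overline{\varphi_{\bar 1 1\bar 1}}=\bar\varphi_{1\bar 1 1}$, the last term equals $-\tfrac12\langle\bar\partial_b\varphi,\bar\partial_b\Box_b\varphi\rangle$. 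Comparing with (\ref{combf}) and using $-\langle\bar\partial_b\Box_b\varphi,\bar\partial_b\varphi\rangle=2\varphi_{\bar 1 1\bar 1}\bar\varphi_1$, $-\langle\overline{P}\varphi,\bar\partial_b\varphi\rangle=-(\overline{P}\varphi)_{\bar 1}\bar\varphi_1$ with $(\overline{P}\varphi)_{\bar 1}=\varphi_{1\bar 1\bar 1}-iA_{\bar 1\bar 1}\varphi_1$, and $R|\bar\partial_b\varphi|^2=R\varphi_{\bar 1}\bar\varphi_1$, the whole of (\ref{combf}) is seen to be equivalent to the third-order tensor identity
\[
\varphi_{\bar 1\bar 1 1}\;=\;2\,\varphi_{\bar 1 1\bar 1}\;-\;\varphi_{1\bar 1\bar 1}\;+\;iA_{\bar 1\bar 1}\varphi_1\;+\;R\,\varphi_{\bar 1}.
\]

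To establish this identity I would invoke three commutation relations from \cite{L}. From the scalar relation $\varphi_{1\bar 1}-\varphi_{\bar 1 1}=i\varphi_0$, one covariant derivative in the $Z_{\bar 1}$-direction gives $\varphi_{1\bar 1\bar 1}=\varphi_{\bar 1 1\bar 1}+i\varphi_{0\bar 1}$. Applying the Ricci-type commutation to the $(0,1)$-form $\varphi_{\bar 1}\theta^{\bar 1}$, with the curvature term read off from the structure equation $d\theta_1{}^1=R\,\theta^1\wedge\theta^{\bar 1}+\cdots$, gives $\varphi_{\bar 1\bar 1 1}-\varphi_{\bar 1 1\bar 1}=-i\varphi_{\bar 1 0}+R\,\varphi_{\bar 1}$. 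The torsion commutation reads $\varphi_{0\bar 1}-\varphi_{\bar 1 0}=A_{\bar 1\bar 1}\varphi^{\bar 1}=A_{\bar 1\bar 1}\varphi_1$. Substituting the torsion relation and then the scalar relation into the Ricci identity makes all the $T$-derivatives $\varphi_0,\varphi_{0\bar 1},\varphi_{\bar 1 0}$ cancel, and one is left precisely with the displayed third-order identity; feeding it back into the Leibniz computation produces (\ref{combf}).

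The Leibniz expansion and the identifications of the pointwise pairings are routine; the delicate step---and the reason the formula contains no explicit torsion term beyond the one packaged inside $\overline{P}$---is the bookkeeping of the three commutations, where one must check that the $\nabla_0$-contributions arising from the scalar commutation, from the Ricci identity for $(0,1)$-forms, and from the torsion commutation cancel \emph{exactly}, and that the curvature term appears with coefficient precisely $+R$ for the normalization of $R$ fixed by the structure equation. Keeping the index raisings ($\varphi^{\bar 1}=\varphi_1$, $\varphi_1{}^1{}_{\bar 1}=\varphi_{1\bar 1\bar 1}$) and all signs straight is where the bulk of the care lies.
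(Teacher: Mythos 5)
Your proposal is correct and follows essentially the same route as the paper: the same Leibniz expansion of $(\varphi_{\bar 1}\bar\varphi_1)_{\bar 1 1}$, the same identification of $\varphi_{\bar 1}\bar\varphi_{1\bar 1 1}$ with $-\tfrac12\langle\bar\partial_b\varphi,\bar\partial_b\Box_b\varphi\rangle$, and the same three commutation relations (Ricci, scalar, torsion) to handle $\varphi_{\bar 1\bar 1 1}\bar\varphi_1$; your single consolidated third-order identity $\varphi_{\bar 1\bar 1 1}=2\varphi_{\bar 1 1\bar 1}-\varphi_{1\bar 1\bar 1}+iA_{\bar 1\bar 1}\varphi_1+R\varphi_{\bar 1}$ is exactly what the paper's equations (2.5)--(2.6) combine to, just packaged in one step.
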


\begin{proof}
We calculate
\begin{equation}
\begin{split}
-\frac{1}{2}\Box_{b} | \bar{\partial}_{b}\varphi |^{2} & = -\frac{1}{2}\Box_{b}
< \varphi_{\bar{1}} \theta^{\bar{1}},\varphi_{\bar{1}} \theta^{\bar{1}}>\\
&=(\varphi_{\bar{1}}\bar{\varphi}_{1})_{\bar{1}1}\\
&=(\varphi_{\bar{1}\bar{1}}\bar{\varphi}_{1} + \varphi_{\bar{1}}\bar{\varphi}_{1\bar{1}})_{1}\\
&=\varphi_{\bar{1}\bar{1}1}\bar{\varphi}_{1} + \varphi_{\bar{1}\bar{1}}\bar{\varphi}_{11}
+\varphi_{\bar{1}1}\bar{\varphi}_{1\bar{1}} + \varphi_{\bar{1}}\bar{\varphi}_{1\bar{1}1},\\
&=\varphi_{\bar{1}\bar{1}}\bar{\varphi}_{11} + \varphi_{\bar{1}1}\bar{\varphi}_{1\bar{1}}
-\frac{1}{2}<\bar{\partial}_{b}\varphi , \bar{\partial}_{b}\Box_{b}\varphi>
+ \varphi_{\bar{1}\bar{1}1}\bar{\varphi}_{1},
\end{split}
\end{equation}
here, for the last equality, we use the identity
\begin{equation*}
-\frac{1}{2}<\bar{\partial}_{b}\varphi,\bar{\partial}_{b}\Box_{b}\varphi>=\varphi_{\bar{1}}\bar{\varphi}_{1\bar{1}1}
\end{equation*}

Therefore, the Bochner formula is completed if we show that
\begin{equation}\label{1}
\varphi_{\bar{1}\bar{1}1}\bar{\varphi}_{1}=-<\bar{\partial}_{b}\Box_{b}\varphi , \bar{\partial}_{b}\varphi>
-<\bar{P}\varphi,\bar{\partial}_{b}\varphi>+R | \bar{\partial}_{b}\varphi |^{2}.
\end{equation}
By the commutation relations, we have.
\begin{equation}\label{2}
\begin{split}
\varphi_{\bar{1}\bar{1}1} \bar{\varphi}_{1} & = (\varphi_{\bar{1}1\bar{1}}-i\varphi_{\bar{1}0}
+R\varphi_{\bar{1}})\bar{\varphi_{1}}\\
&=(\varphi_{1\bar{1}\bar{1}}-i\varphi_{0 \bar{1}}-i\varphi_{\bar{1}0}+R\varphi_{\bar{1}})\bar{\varphi}_{1}\\
&=(\bar{P}_{1}\varphi+iA_{\bar{1}\bar{1}}\varphi_{1}-i\varphi_{0\bar{1}}
-i\varphi_{\bar{1}0}+R\varphi_{\bar{1}})\bar{\varphi}_{1}\\
&=(\bar{P}_{1}\varphi)\bar{\varphi}_{1}+R\varphi_{\bar{1}}\bar{\varphi}_{1}
+(iA_{\bar{1}\bar{1}}\varphi-i\varphi_{0\bar{1}}-i\varphi_{\bar{1}0})\bar{\varphi}_{1}\\
&=<\bar{P}\varphi,\bar{\partial}_{b}\varphi>+R|\bar{\partial}_{b}\varphi|^{2}
+(iA_{\bar{1}\bar{1}}\varphi_{1}-i\varphi_{0\bar{1}}-i\varphi_{\bar{1}0})\bar{\varphi}_{1},\\
&=<\bar{P}\varphi,\bar{\partial}_{b}\varphi>+R|\bar{\partial}_{b}\varphi|^{2}
+2(iA_{\bar{1}\bar{1}}\varphi_{1}-i\varphi_{0\bar{1}})\bar{\varphi}_{1},
\end{split}
\end{equation}
and
\begin{equation}\label{3}
\begin{split}
&-\frac{1}{2}<\bar{\partial}_{b}\Box_{b}\varphi,\bar{\partial}_{b}\varphi>\\
&=<\varphi_{\bar{1}1\bar{1}}\theta^{\bar{1}},\varphi_{\bar{1}}\theta^{\bar{1}}>\\
&=\varphi_{\bar{1}1\bar{1}}\bar{\varphi}_{1}\\
&=(\varphi_{1\bar{1}\bar{1}}-i \varphi_{0\bar{1}})\bar{\varphi}_{1}\\
&=(\bar{P}_{1}\varphi+i A_{\bar{1}\bar{1}}\varphi_{1}-i\varphi_{0\bar{1}})\bar{\varphi}_{1}\\
&=<\bar{P}\varphi,\bar{\partial}_{b}\varphi>+(iA_{\bar{1}\bar{1}}\varphi_{1}-i\varphi_{0\bar{1}})\bar{\varphi}_{1}
\end{split}
\end{equation}
Combining (\ref{2}) and (\ref{3}) ,we obtain (\ref{1}). This completes the Proposition.
\end{proof}

We now prove Theorem 1.3.

\textbf{Proof of Theorem 1.3 :} Let $\varphi$ be an eigenfunction with respect to a nonzero eigenvalue $\lambda$, that is, $\varphi$ is not a CR function.
Taking the integral of both sides of the Bochner formula (\ref{combf}), we have
\begin{equation}
\begin{split}
0&=\int\varphi_{\bar{1}\bar{1}}\bar{\varphi}_{11}+\int\varphi_{\bar{1}1}\bar{\varphi}_{1\bar 1}-\frac{3}{2}\lambda\int|\bar{\partial}_{b}\varphi|^{2}\\
&+\int <P_{0}\varphi,\varphi>+\int R|\bar{\partial}_{b}\varphi|^{2}.
\end{split}
\end{equation}
On the other hand,
\[\int\varphi_{\bar{1}1}\bar{\varphi}_{1\bar 1}=\frac{1}{4}\int<\Box_{b}\varphi,\Box_{b}\varphi>=\frac{\lambda}{2}\int |\bar{\partial}_{b}\varphi|^{2}.\]
Taking together the above two formulae, we obtain

\begin{equation}
\begin{split}
\lambda\int |\bar{\partial}_{b}\varphi|^{2}&=\int|\varphi_{\bar{1}\bar{1}}|^{2}+\int <P_{0}\varphi,\varphi>+\int R|\bar{\partial}_{b}\varphi|^{2}\\
&\geq\int <P_{0}\varphi,\varphi>+\int R|\bar{\partial}_{b}\varphi|^{2}.
\end{split}
\end{equation}
Therefore, if $P_{0}$ is nonnegative and $R>0$, then we immediately
have that $\lambda\geq\min{R}$. Since the spectrum
$\textrm{spec}(\Box_{b})$ of the Kohn Laplacian in $(0,\infty)$ only
consists of point eigenvalues (see Theorem 1.3 in \cite{BE}), it
follows that the range of $\Box_{b}$ is closed. Applying the result
of Kohn \cite{K}, we conclude $M$ is embeddable.

To prove the second part of Theorem \ref{main1} note if the CR
Yamabe constant $>0$, then we can choose a contact form such that
the Webster curvature with respect to this contact form is positive
and so we conclude by the first part of Theorem \ref{main1}.

\begin{rem}
The estimate for the nonzero eigenvalues is sharp. For example, the standard sphere $S^{3}$ as a pseudohermitian 3-manifold has the smallest
nonzero eigenvalue $\lambda=2=R$, for details, see \cite{C}.
\end{rem}

\begin{rem}
In general, let $M^{2n+1}$ be a pseudohermitian manifold. The Bochner formula for the Kohn Laplacian is as follows:
\begin{equation}\label{bfh}
\begin{split}
-\frac{1}{2} \Box _{b} | \bar{\partial} _{b} \varphi |^{2}&=
\sum_{\alpha, \beta }(\varphi _{\bar{\alpha }\bar{\beta }}\bar{\varphi}_{\alpha \beta }+\varphi_{\bar{\alpha }\beta }\bar{\varphi}_{\alpha \bar{\beta }})\\
&-\frac{1}{2n}<\bar{\partial}_{b}\varphi,\bar{\partial}_{b}\Box_{b}\varphi>
- \frac{(n+1)}{2n}<\bar{\partial}_{b}\Box_{b}\varphi,\bar{\partial}_{b}\varphi>\\
&-\frac{1}{n}<\bar{P}\varphi,\bar{\partial}_{b}\varphi> +\frac{(n-1)}{n}<P\bar{\varphi}, \partial_{b}\bar{\varphi}>\\
&+ Ric(\nabla_{b}\varphi_{\C}, \nabla_{b}\varphi_{\C}),
\end{split}
\end{equation}
where $\nabla_{b}\varphi_{\C}$ is the corresponding complex $(1,0)$-vector of $\nabla_{b}\varphi$. The proof of (\ref{bfh}) is the same as (\ref{combf}).
Again, in case $n=2$, using this formula we also obtain that the sharp lower bound of nonzero eigenvalues of the Kohn Laplacian $\Box_{b}$ is $\frac{4}{3}k_{0}$,
provided that the Ricci curvature has the lower bound:
\[Ric(X,X)\geq k_{0}|X|^{2}, \]
for some $k_{0}$ and for all complex $(1,0)$-vector X. Unfortunately, in the higher dimensional cases $n\geq 3$, the coefficient $\frac{(n-1)}{n}$ of the term
$<P\bar{\varphi}, \partial_{b}\bar{\varphi}>$ is too large to get the lower bound of nonzero eigenvalues of the Kohn Laplacian $\Box_{b}$ immediately.
\end{rem}

\begin{exa}\label{ex2-4}
In this example, we shall compute the Webster curvature of Rossi's global nonembeddability example together with a suitable contact structure and
show that the associated CR Paneitz operators are not nonnegative.
Let $S^{3}=\{(z_1,z_2)\in\C^{2}| |z_1|^{2}+|z_2|^{2}-1=0\}$
be the boundary of the unit ball in $\C^{2}$ with the induced CR structure given by the complex vector field
\[Z_{1}=\bar{z}_{2}\frac{\partial}{\partial z_{1}}-\bar{z}_{1}\frac{\partial}{\partial z_{2}},\]
and contact form
\[\theta=\frac{i(\bar{\partial}u-\partial u)}{2}|_{S^{3}},\]
where $u=|z_1|^{2}+|z_2|^{2}-1$. Taking the admissible coframe
\[\theta^{1}=z_{2}dz_{1}-z_{1}dz_{2},\]
we have $d\theta=i\theta^{1}\wedge\theta^{\bar 1}$. Rossi's example is the CR manifold $S^{3}$ together with the CR structure
given by
\[L_{t}=Z_{1}+t\bar{Z}_{1}, \]
for all $t\in\R$ and $t\neq 1, -1$. Now, for $|t|<1$, taking the contact form
\[\theta(t)=\theta,\]
and the admissible coframe
\[\theta^{1}(t)=\frac{1}{\sqrt{1-t^{2}}}(\theta^{1}-t\theta^{\bar 1}),\]
we have $d\theta(t)=i\theta^{1}(t)\wedge\theta^{\bar 1}(t)$ and the following Proposition:

\begin{prop}
For $|t|<1$, with respect to the coframe $\{\theta(t), \theta^{1}(t), \theta^{\bar 1}(t)\}$, the connection form and pseudohermitian torsion are as follows
\begin{equation}
\theta_{1}{}^{1}(t)=\theta_{1}{}^{1}-\frac{4t^{2}i}{1-t^{2}}\theta=\frac{-2(1+t^{2})}{1-t^{2}}i\theta;
\end{equation}
and
\[\tau^{1}(t)=\frac{4ti}{1-t^2}\theta^{\bar 1}(t),\]
where
\[\theta_{1}{}^{1}=-\bar{z}_{1}dz_{1}-\bar{z}_{2}dz_{2}+z_{1}d\bar{z}_{1}+z_{2}d\bar{z}_{2}=-2i\theta.\]
In addition, the Webster curvature is $R(t)=\frac{2(1+t^{2})}{1-t^2}$.
\end{prop}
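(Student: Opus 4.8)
The plan is to use that, with the contact form $\theta(t)=\theta$ and the admissible coframe $\theta^{1}(t)$ already fixed, the structure equations (\ref{id10}) determine the connection form $\theta_{1}{}^{1}(t)$ and the torsion $\tau^{1}(t)$ uniquely, after which the structure equation for $d\theta_{1}{}^{1}(t)$ reads off $R(t)$. Thus the whole computation reduces to differentiating the coframe and re-expressing the answer in the frame $\{\theta(t),\theta^{1}(t),\theta^{\bar 1}(t)\}$. The one input about the undeformed sphere is its pseudohermitian data: restricting the ambient forms to $S^{3}$ by means of $du=0$, i.e. $\bar z_{1}dz_{1}+z_{1}d\bar z_{1}+\bar z_{2}dz_{2}+z_{2}d\bar z_{2}=0$, one checks that $\theta_{1}{}^{1}=-\bar z_{1}dz_{1}-\bar z_{2}dz_{2}+z_{1}d\bar z_{1}+z_{2}d\bar z_{2}$ satisfies $d\theta^{1}=\theta^{1}\wedge\theta_{1}{}^{1}$ with vanishing torsion, that $\theta_{1}{}^{1}=-2i\theta$ (immediate from $\theta=\tfrac{i}{2}(\bar\partial u-\partial u)|_{S^{3}}$), and that $d\theta_{1}{}^{1}=2\,\theta^{1}\wedge\theta^{\bar 1}$, so $R=2$. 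This is the only delicate part, since the naive ambient expansion of $d\theta^{1}$ is not literally $\theta^{1}\wedge(-2i\theta)$ as a form on $\C^{2}$; the identity holds only after restriction to $S^{3}$, where $|z_{1}|^{2}+|z_{2}|^{2}=1$ is used to eliminate the extra terms.

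Next I would invert the coframe change. From $\theta^{1}(t)=\tfrac{1}{\sqrt{1-t^{2}}}(\theta^{1}-t\theta^{\bar 1})$ and its conjugate one gets $\theta^{1}=\tfrac{1}{\sqrt{1-t^{2}}}(\theta^{1}(t)+t\theta^{\bar 1}(t))$ and $\theta^{\bar 1}=\tfrac{1}{\sqrt{1-t^{2}}}(\theta^{\bar 1}(t)+t\theta^{1}(t))$. A one-line computation gives $\theta^{1}(t)\wedge\theta^{\bar 1}(t)=\theta^{1}\wedge\theta^{\bar 1}$, hence $d\theta(t)=i\theta^{1}\wedge\theta^{\bar 1}=i\theta^{1}(t)\wedge\theta^{\bar 1}(t)$, so the Levi form stays normalized. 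For the connection form, differentiate and use the undeformed structure equation together with $\theta_{\bar 1}{}^{\bar 1}=\overline{\theta_{1}{}^{1}}=2i\theta$:
\[
d\theta^{1}(t)=\tfrac{1}{\sqrt{1-t^{2}}}\big(d\theta^{1}-t\,d\theta^{\bar 1}\big)
=\tfrac{-2i}{\sqrt{1-t^{2}}}\,(\theta^{1}+t\theta^{\bar 1})\wedge\theta ,
\]
and substituting $\theta^{1}+t\theta^{\bar 1}=\tfrac{1}{\sqrt{1-t^{2}}}\big((1+t^{2})\theta^{1}(t)+2t\,\theta^{\bar 1}(t)\big)$ yields
\[
d\theta^{1}(t)=\theta^{1}(t)\wedge\Big(\tfrac{-2i(1+t^{2})}{1-t^{2}}\theta\Big)+\theta\wedge\Big(\tfrac{4ti}{1-t^{2}}\theta^{\bar 1}(t)\Big).
\]
Comparing with (\ref{id10}): the coefficient of $\theta^{1}(t)\wedge\theta^{\bar 1}(t)$ vanishes, so the reality constraint $\theta_{1}{}^{1}(t)+\theta_{\bar 1}{}^{\bar 1}(t)=0$ forces $\theta_{1}{}^{1}(t)$ to have no $\theta^{1}(t)$- or $\theta^{\bar 1}(t)$-component; we thus read off $\theta_{1}{}^{1}(t)=\tfrac{-2i(1+t^{2})}{1-t^{2}}\theta$ and $\tau^{1}(t)=\tfrac{4ti}{1-t^{2}}\theta^{\bar 1}(t)$. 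Since $\theta_{1}{}^{1}=-2i\theta$, the former equals $\theta_{1}{}^{1}-\tfrac{4t^{2}i}{1-t^{2}}\theta$, and $\tau^{1}(t)\equiv 0\bmod\theta^{\bar 1}(t)$, as required.

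Finally, $\theta_{1}{}^{1}(t)$ is a constant multiple of $\theta$, so $d\theta_{1}{}^{1}(t)=\tfrac{-2i(1+t^{2})}{1-t^{2}}\,d\theta=\tfrac{-2i(1+t^{2})}{1-t^{2}}\,i\,\theta^{1}(t)\wedge\theta^{\bar 1}(t)=\tfrac{2(1+t^{2})}{1-t^{2}}\,\theta^{1}(t)\wedge\theta^{\bar 1}(t)$; since $d\theta(t)$ has no $\theta\wedge\theta^{1}(t)$ term there is no contribution from the torsion-derivative term in the structure equation for $d\theta_{1}{}^{1}(t)$, and hence $R(t)=\tfrac{2(1+t^{2})}{1-t^{2}}$. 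As consistency checks one verifies that $\theta_{1}{}^{1}(t)$ is purely imaginary, that $\tau^{1}(t)$ satisfies the $\bmod\theta^{\bar 1}(t)$ condition, and that $\theta_{1}{}^{1}(t),\tau^{1}(t),R(t)$ all reduce to the undeformed values at $t=0$. The main obstacle is the $S^{3}$-restriction step for the undeformed structure; everything after that is a formal manipulation.
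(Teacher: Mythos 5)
Your proposal is correct and follows essentially the same route as the paper: the paper's proof simply observes that the stated forms satisfy the structure equations (\ref{id10}) and computes $d\theta_{1}{}^{1}(t)$ directly, which is exactly what you carry out (with the details of the coframe inversion and the restriction-to-$S^{3}$ step made explicit). Your computations check out, including the identity $\theta^{1}(t)\wedge\theta^{\bar 1}(t)=\theta^{1}\wedge\theta^{\bar 1}$ and the reading-off of $R(t)$ from the coefficient of $\theta^{1}(t)\wedge\theta^{\bar 1}(t)$.
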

\begin{proof}
We just check that forms $\theta_{1}{}^{1}(t),\ \tau^1(t)$ satisfy the equations (\ref{id10}).
Finally, after a direct computation, we see that $d\theta_{1}{}^{1}(t)=\frac{2(1+t^{2})}{1-t^2}\theta^1\wedge\theta^{\bar 1}$, so $R(t)=\frac{2(1+t^{2})}{1-t^2}$.
\end{proof}
Similarly, for $|t|>1$, take the contact form $\theta(t )$ and the admissible coframe $\theta^{1}(t)$ as follows:
\[\theta(t)=-\theta,\ \ \ \ \ \theta^{1}(t)=\frac{1}{\sqrt{t^{2}-1}}(\theta^{1}+t\theta^{\bar 1}).\]
Then we have
\[\tau^{1}(t)=\frac{4ti}{1-t^2}\theta^{\bar 1}(t)\ \textrm{and}\ R(t)=\frac{2(1+t^{2})}{t^{2}-1}\]
From Theorem \ref{main1} and the above Proposition, we immediately obtain that the CR Paneitz operator of Rossi's nonembeddable manifolds are negative.

\end{exa}

\section{Stability of Embeddability}

We now consider stability issues, see \cite{BE} and \cite{Le} for earlier work. We have a fixed CR structure on a compact manifold $(M^{3}, \theta, J)$.
Let us denote by $\bar{L}$ the CR vector field on $M^{3}$. We now perturb $\bar{L}$ by a smooth family of functions
$\varphi(\cdot, t)=\varphi_{t}(\cdot)$ ,where $(\cdot)$ represents a point on M, and $t\in(-\varepsilon ,\varepsilon )$.
We assume always,

\begin{equation}\label{se1}
D^{\alpha}_{z,s}\varphi (z,s,t)|_{t=0}=0,\ \ |\alpha|\leq l_{0},\ \ l_{0}\geq 4,\ \ (z,s)\in M.
\end{equation}

We define

\begin{equation}\label{se2}
\bar{L}_{t}=\bar{L}+\varphi(\cdot, t)L.
\end{equation}

Associated to $\bar{L}_{t}$ we form the associated $\bar{\partial}_{b}^{(t)}$-Laplacian operator,
\begin{equation}\label{se3}
\Box_{b}^{(t)}=\bar{\partial}_{b}^{(t)*}\bar{\partial}_{b}^{(t)}.
\end{equation}

We now use our main result to guarantee embedding of our CR structure in $\C^{N}$. Thus we assume that along the deformation
path in $t$,
\begin{equation}\label{se4}
\textrm{the associated Paneitz operator}\ P_{0}^{(t)}\geq 0,\ \ \ \ \ \ \ \ \ \ \ \ \ \ \ \ \ \ \ \ \ \ \ \ \ \ \ \ \ \ \ \ \ \ \ \ \ \ \ \ \ \ \ \ \ \ \ \ \ \ \ \ \ \ \ \ \ \ \ \
\end{equation}
\begin{equation}\label{se5}
\textrm{the CR Yamabe constant}\geq c>0.\ \ \ \ \ \ \ \ \ \ \ \ \ \ \ \ \ \ \ \ \ \ \ \ \ \ \ \ \ \ \ \ \ \ \ \ \ \ \ \ \ \ \ \ \ \ \ \ \ \ \ \ \ \ \ \ \ \ \ \ \ \ \ \ \ \
\end{equation}
By our main result (Theorem \ref{main1}),
using (\ref{se4}) and (\ref{se5}) it follows that
\begin{equation}\label{se6}
\lambda_{1}(\Box_{b}^{(t)})\geq\nu >0,
\end{equation}
with $\nu$ independent of $t$. Thus by using the construction of Boutet de Monvel in \cite{BD} or the exposition in Chen and Shaw's book
\cite{CS}, we can embed for $\varepsilon >0$ small enough the CR structures via a map $\Psi_{t} $ into the same $\C^{N}$, i.e.,
\begin{equation}\label{se7}
\Psi_{t}: (M, \theta, J_{t})\longrightarrow \C^{N}.
\end{equation}
The question arises if the maps $\Psi_{t}$ are close in say the sup-norm in $t$. We have the following theorem which we re-state from the introduction:

THEOREM 1.5.
\textit{\ Under} (\ref{se1}), (\ref{se4}), (\ref{se5}), \textit{\ for any} $\delta>0$, \textit{\ there exists} $\varepsilon >0$, \textit{\ so that}
\[\sup_{t\in[-\varepsilon, \varepsilon]}\|\Psi_{t}-\Psi_{0}\|_{C^{k}(M)}< \delta,\ \ k=k(l_{0}).\]

\begin{proof}
The proof of this theorem is abstract and relies on an identity in \cite{BE}. We use Proposition 5.55 in \cite{BE}.
We denote the projection into the zero eigenspace of $\Box_{b}^{(t)}$ by $\Im^{\varphi_{t}} $, which is the Szego projector.
By the spectral theorem and (\ref{se6}) if $|\lambda|=\nu/2$, the resolvent $(\Box_{b}^{(t)}-\lambda)^{-1}$ is well-defined and
so
\[\Im^{\varphi_{t}}=\int\limits_{|\lambda |=\nu/2}(\Box_{b}^{(t)}-\lambda)^{-1}d\lambda,\]
and it is immediate that $\Im^{\varphi_{t}}$ is a bounded operator on $L^{2}(M)$. As observed in \cite{BE} as a consequence of
the above fact and their identity (5.58) they obtain the inequality (5.60) which we re-state,
\begin{equation}\label{se8}
\|\Im^{\varphi_{t}}-\Im^{\varphi_{0}}\|_{L^{2}(M)}\leq C\nu A\|\varphi_{t}-\varphi_{0}\|_{L^{\infty}(M)},
\end{equation}
where $A$ is the sup norm of some high enough derivative of $\varphi_{t}-\varphi_{0}$. But by our hypothesis (\ref{se1})
the right side of (\ref{se8}) is smaller than $\delta>0$, for $\varepsilon>0 $, sufficiently small.\\
Now recall the construction of Boutet de Monvel. Using the notation in \cite{CS}, page 318, the embedding for each coordinate
chart is given by a CR function $h_{t}$ (we are in CR dimension $1$), where
\begin{equation}\label{se9}
h_{t}=\Im^{\varphi_{t}}\big(\psi e^{-\tau\varphi_{p}}\big),\ \ \tau\rightarrow  \infty.
\end{equation}
Now note $h_{t}-h_{0}$ also satisfy an equation, that is,
\begin{equation}\label{se10}
\Box_{b}^{(t)}(h_{t}-h_{0})=\big(\Box_{b}^{(0)}-\Box_{b}^{(t)}\big)(h_{0}).
\end{equation}
From (\ref{se8}), (\ref{se9}),\ $\|h_{t}-h_{0}\|_{L^{2}(M)}<\delta$.\\
From (\ref{se1}) and (\ref{se10}), the right side of (\ref{se10}) is small in the $C^{\infty}$-norm. Since we have (\ref{se6}),
it now implies by sub-elliptic regularity that for $\delta>0$, there exists $\varepsilon_0 $,
\begin{equation}\label{se11}
\sup_{t\in(-\varepsilon_{0},\varepsilon_{0})}\|h_{t}-h_{0}\|_{C^{\infty}(M)}\leq\delta.
\end{equation}
In fact by differentiation of (\ref{se10}) in $t$, we may also obtain higher stability in $t$, provided we replace (\ref{se1})
by the stronger hypothesis that $D_{z,s}^{\alpha}D_{t}^{\beta}\varphi(z,s,0)=0$ for large enough $|\alpha|, |\beta|$. This
proves our theorem since on coordinate charts of $M$, the map $\Psi_{t}$ is given by $h_{t}$.
\end{proof}

\section{The second variation of the Paneitz operator}
Our goal in this section is to investigate CR structures close to the standard structure on $S^3$ and prove Theorem \ref{main3}, which is a converse to Theorem \ref{main1}.
To achieve our goal we compute the second variation of the Paneitz operator. Let $(M, J, \theta)$ be a three-dimensional pseudo-hermitian manifold.
In the computation, the contact form $\theta$ is always fixed and we suppose that the CR structure $J$ is given by the the $(0,1)$-complex vector field
$Z_{\bar 1}$.

Suppose $\phi\in C^{\infty}(M)$ with $|\phi|<1$. Then the complex vector field
\begin{equation}\label{dfcr}
Z_{\bar 1}{}^{\phi}=Z_{\bar 1}+\phi Z_{1}
\end{equation}
defines a strictly pseudoconvex CR structure on $M$.

For the purpose of computing the second variation, we need to know exactly what the connection and torsion forms are for the manifold with CR
structure defined by the complex vector field (\ref{dfcr}). Therefore, first of all, we focus on the computation of the connection form and torsion
form and then use them to obtain the second variation of the Paneitz operator.

Let $\theta^{1}$ denote the $(1,0)$-form dual to $Z_{1}$. We take
\begin{equation}\label{admicofr}
\theta^{1}{}_{\phi}=F(\phi)(\theta^{1}-\phi\theta^{\bar 1})
\end{equation}
as an admissible coframe, where
\begin{equation*}
F=F(\phi)=\frac{1}{(1-|\phi|^{2})^{1/2}},
\end{equation*}
which is a real function. For simplifying the computation, we normalize $Z_{\bar 1}{}^{\phi}$ by setting
\begin{equation*}
Z_{\bar 1}{}^{\phi}=F(\phi)(Z_{\bar 1}+\phi Z_{1})
\end{equation*}
such that $\{Z_{1}{}^{\phi}, Z_{\bar 1}{}^{\phi}, T\}$ is dual to $\{\theta^{1}{}_{\phi}, \theta^{\bar 1}{}_{\phi}, \theta\}$ and $h_{1\bar 1}{}^{\phi}\equiv h_{1\bar 1}$.
Now we are ready to compute the connection and torsion forms, which are denoted by $\theta_{1}{}^{1}{}_{\phi}$ and $\tau^{1}{}_{\phi}$, respectively. They are
determined by the following structure equations:
\begin{equation}\label{strueqfordef}
\begin{split}
d\theta^{1}{}_{\phi}&=\theta^{1}{}_{\phi}\wedge\theta_{1}{}^{1}{}_{\phi} +\theta\wedge\tau^{1}{}_{\phi}\\
\tau^{1}{}_{\phi}&=0,\ \ \ \textrm{mod}\ \theta^{\bar 1}{}_{\phi}\\
h^{1\bar 1}{}_{\phi}dh_{1\bar 1}{}^{\phi}&=\theta_{1}{}^{1}{}_{\phi}+\theta_{\bar 1}{}^{\bar 1}{}_{\phi},
\end{split}
\end{equation}
where $h^{1\bar 1}{}_{\phi}$ is the inverse of $h_{1\bar 1}{}^{\phi}$. Denote $\tau^{1}{}_{\phi}=A^{1}{}_{\bar 1}{}^{\phi}\theta^{\bar 1}{}_{\phi}$. Then we
have the following proposition

\begin{prop}\label{expofcoto}
We have
\begin{equation}
\begin{split}
\theta_{1}{}^{1}{}_{\phi}&=\theta_{1}{}^{1}-F^{-1}dF-F^{-1}(B_{11}\theta^{1}+B_{12}\theta^{\bar 1}+B_{13}\theta);\\
A^{1}{}_{\bar 1}{}^{\phi}&=A^{1}{}_{\bar 1}-F^{2}\Big(\phi_{0}+\phi\theta_{1}{}^{1}(T)-\phi\theta_{\bar 1}{}^{\bar 1}(T)+\phi^{2}A^{\bar 1}{}_{1}-|\phi|^{2}A^{1}{}_{\bar 1}\Big),
\end{split}
\end{equation}
where
\begin{equation}
\begin{split}
B_{11}&=F^{2}\Big(-2F_{1}-\bar{\phi}F\theta_{\bar 1}{}^{\bar 1}(Z_{\bar 1})+\bar{\phi}F\theta_{1}{}^{1}(Z_{\bar 1})\\
&-F\bar{\phi}_{\bar 1}-\bar{\phi}F\phi_{1}-2\bar{\phi}F_{\bar 1}-|\phi|^{2}F\theta_{1}{}^{1}(Z_{1})+|\phi|^{2}F\theta_{\bar 1}{}^{\bar 1}(Z_{1})\Big);\\
B_{12}&=F^{2}\Big(2|\phi|^{2}F_{\bar 1}+\phi F\theta_{1}{}^{1}(Z_{1})-\phi F\theta_{\bar 1}{}^{\bar 1}(Z_{1})\\
&+F\phi_{1}+\phi F\bar{\phi}_{\bar 1}+2\phi F_{1}+|\phi|^{2}F\theta_{\bar 1}{}^{\bar 1}(Z_{\bar 1})-|\phi|^{2}F\theta_{1}{}^{1}(Z_{\bar 1})\Big);\\
B_{13}&=F^{3}\Big(-\bar{\phi}(\phi_{0}+\phi\theta_{1}{}^{1}(T)-\phi\theta_{\bar 1}{}^{\bar 1}(T))+\bar{\phi}A^{1}{}_{\bar 1}-\phi A^{\bar 1}{}_{1}\Big).
\end{split}
\end{equation}
\end{prop}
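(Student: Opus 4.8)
The plan is to derive the structure equations for the deformed coframe by brute-force exterior differentiation, then match against \eqref{id10} (applied to the deformed data) to read off $\theta_1{}^1{}_\phi$ and $\tau^1{}_\phi$ uniquely. First I would differentiate the admissible coframe \eqref{admicofr}: starting from $\theta^1{}_\phi = F(\theta^1 - \phi\theta^{\bar1})$, compute
\[
d\theta^1{}_\phi = dF\wedge(\theta^1-\phi\theta^{\bar1}) + F\,d\theta^1 - F\,d\phi\wedge\theta^{\bar1} - F\phi\,d\theta^{\bar1},
\]
and substitute the \emph{undeformed} structure equations $d\theta^1 = \theta^1\wedge\theta_1{}^1 + \theta\wedge\tau^1$, $d\theta^{\bar1} = \theta^{\bar1}\wedge\theta_{\bar1}{}^{\bar1} + \theta\wedge\tau^{\bar1}$, together with $d\theta = i\theta^1\wedge\theta^{\bar1}$. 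Next I would expand $dF = F_1\theta^1 + F_{\bar1}\theta^{\bar1} + F_0\theta$ and $d\phi = \phi_1\theta^1 + \phi_{\bar1}\theta^{\bar1} + \phi_0\theta$ in the undeformed frame, and rewrite everything in terms of wedges of $\theta^1,\theta^{\bar1},\theta$. The point is that \eqref{id10} for the deformed structure forces $d\theta^1{}_\phi$ to have the form $\theta^1{}_\phi\wedge\theta_1{}^1{}_\phi + \theta\wedge\tau^1{}_\phi$ with $\tau^1{}_\phi \equiv 0 \bmod \theta^{\bar1}{}_\phi$; since $\{\theta^1{}_\phi,\theta^{\bar1}{}_\phi,\theta\}$ is a coframe, I can write $\theta_1{}^1{}_\phi = \theta_1{}^1 - F^{-1}dF - F^{-1}(B_{11}\theta^1 + B_{12}\theta^{\bar1} + B_{13}\theta)$ with the $B_{ij}$ undetermined, plug this ansatz in, and solve for the $B_{ij}$ by collecting coefficients of $\theta^1\wedge\theta^{\bar1}$, $\theta^1\wedge\theta$, $\theta^{\bar1}\wedge\theta$.

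The third equation of \eqref{strueqfordef}, $h^{1\bar1}{}_\phi\,dh_{1\bar1}{}^\phi = \theta_1{}^1{}_\phi + \theta_{\bar1}{}^{\bar1}{}_\phi$, combined with the normalization $h_{1\bar1}{}^\phi \equiv h_{1\bar1} = 1$, gives the reality/trace constraint $\theta_1{}^1{}_\phi + \theta_{\bar1}{}^{\bar1}{}_\phi = 0$; I would use this to fix the would-be ambiguity in splitting off the purely imaginary part — concretely, it ties $B_{11}$ to $\overline{B_{12}}$-type conjugates and forces $B_{13}$ to be (minus) the conjugate of its own bar, i.e. the formula for $B_{13}$ should come out so that $F^{-1}B_{13}\theta$ is the imaginary piece correcting the trace. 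The torsion form is then read off from the $\theta\wedge(\cdot)$ part: after subtracting the $\theta^1{}_\phi\wedge\theta_1{}^1{}_\phi$ contribution, whatever remains proportional to $\theta\wedge\theta^{\bar1}{}_\phi$ (equivalently $\theta\wedge(\theta^{\bar1}-\bar\phi\theta^1)$ up to the $F$ factor) is $\theta\wedge\tau^1{}_\phi$, and expressing the coefficient back in the deformed coframe yields $A^1{}_{\bar1}{}^\phi = A^1{}_{\bar1} - F^2(\phi_0 + \phi\theta_1{}^1(T) - \phi\theta_{\bar1}{}^{\bar1}(T) + \phi^2 A^{\bar1}{}_1 - |\phi|^2 A^1{}_{\bar1})$. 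I expect the $\phi_0 = T\phi$ term to arise precisely from the $\theta$-component of $d\phi$ hitting $d\phi\wedge\theta^{\bar1}$, and the $\phi\theta_1{}^1(T)$, $\phi\theta_{\bar1}{}^{\bar1}(T)$ terms from evaluating $\tau^1,\tau^{\bar1}$ and the connection-form substitutions at $T$.

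The main obstacle will be bookkeeping rather than conceptual: the deformed frame is not orthonormal-adapted in a clean way, so every term $F_j$, $\phi_j$, $\theta_1{}^1(Z_j)$ proliferates, and one must be careful that covariant derivatives are taken with respect to the \emph{undeformed} connection throughout (the comma-index conventions of \eqref{id10} refer to $\nabla$, not $\nabla^\phi$). A secondary subtlety is verifying that the candidate forms actually satisfy \eqref{strueqfordef} rather than merely being consistent — but as the authors note, this reduces to a direct check, since \eqref{id10} determines $\theta_1{}^1{}_\phi$ and $\tau^1{}_\phi$ uniquely once a coframe is fixed. I would organize the computation by first recording $dF = F^3(\bar\phi_1\phi + \bar\phi\phi_1)\theta^1/2 + \cdots$ type identities from $F^2(1-|\phi|^2)=1$ (so $F_j = F^3\,\mathrm{Re}\,(\text{or the appropriate bilinear in }\phi_j,\bar\phi_j)$), which lets one eliminate $F_j$ in favor of $\phi_j$ at the very end to present the cleanest form, and then simply match coefficients. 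No deep input beyond Lee's commutation relations and structure equations (already cited) is needed.
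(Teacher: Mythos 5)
Your proposal is correct and follows essentially the same route as the paper: differentiate $\theta^1{}_\phi$ using the undeformed structure equations, match against the deformed structure equations with undetermined coefficients, and close the system using the trace/normalization condition $\theta_1{}^1{}_\phi+\theta_{\bar 1}{}^{\bar 1}{}_\phi=h^{1\bar 1}dh_{1\bar 1}$ together with the requirement $\tau^1{}_\phi\equiv 0\bmod\theta^{\bar 1}{}_\phi$. The only organizational difference is that the paper packages the coefficient-matching via Cartan's lemma, introducing a full symmetric matrix $B_{ij}=B_{ji}$ whose off-diagonal entries ($B_{21},B_{23},B_{31},B_{32}$) serve as the intermediate unknowns that your ``collect coefficients of $\theta^1\wedge\theta^{\bar 1}$, $\theta^1\wedge\theta$, $\theta^{\bar 1}\wedge\theta$'' step would produce implicitly.
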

\begin{proof}
From the structure equations (\ref{id10}), we have
\begin{equation*}
d\theta^{1}=\theta^{1}\wedge\theta_{1}{}^{1}+\theta\wedge\tau^{1},
\end{equation*}
thus
\begin{equation}\label{coto1}
\begin{split}
d\theta^{1}{}_{\phi}&=d(F\theta^{1}-\phi F\theta^{\bar 1})\\
&=dF\wedge\theta^{1}+Fd\theta^{1}-d(\phi F)\wedge\theta^{\bar 1}-\phi Fd\theta^{\bar 1}\\
&=dF\wedge\theta^{1}+F\theta^{1}\wedge\theta_{1}{}^{1}+F\theta\wedge\tau^{1}\\
&-d(\phi F)\wedge\theta^{\bar 1}-\phi F\theta^{\bar 1}\wedge\theta_{\bar 1}{}^{\bar 1}-\phi F\theta\wedge\tau^{\bar 1}\\
&=(dF-F\theta_{1}{}^{1})\wedge\theta^{1}+(d(\phi F)-\phi F\theta_{\bar 1}{}^{\bar 1})\wedge\theta^{\bar 1}+(-F\tau^{1}+\phi F\tau^{\bar 1})\wedge\theta.
\end{split}
\end{equation}
On the other hand,
\begin{equation}\label{coto2}
\begin{split}
&\theta^{1}{}_{\phi}\wedge\theta_{1}{}^{1}{}_{\phi}+\theta\wedge\tau^{1}{}_{\phi}\\
=&(F\theta^{1}-\phi F\theta^{\bar 1})\wedge\theta_{1}{}^{1}{}_{\phi}+\theta\wedge\tau^{1}{}_{\phi}\\
=&(-F\theta_{1}{}^{1}{}_{\phi})\wedge\theta^{1}+(\phi F\theta_{1}{}^{1}{}_{\phi})\wedge\theta^{\bar 1}+(-\tau^{1}{}_{\phi})\wedge\theta.
\end{split}
\end{equation}
Comparing (\ref{coto1}) and (\ref{coto2}), we see, by Cartan's lemma, that there exists complex-valued functions $B_{ij}, \ i,j=1, 2, 3$ such that
\begin{equation}\label{coto3}
\begin{split}
-F\theta_{1}{}^{1}{}_{\phi}&=(dF-F\theta_{1}{}^{1})+B_{11}\theta^{1}+B_{12}\theta^{\bar 1}+B_{13}\theta;\\
\phi F\theta_{1}{}^{1}{}_{\phi}&=-d(\phi F)+\phi F\theta_{\bar 1}{}^{\bar 1})+B_{21}\theta^{1}+B_{22}\theta^{\bar 1}+B_{23}\theta;\\
-\tau^{1}{}_{\phi}&=-F\tau^{1}+\phi F\tau^{\bar 1}+B_{31}\theta^{1}+B_{32}\theta^{\bar 1}+B_{33}\theta,
\end{split}
\end{equation}
and
\begin{equation}\label{coto4}
B_{ij}=B_{ji}.
\end{equation}
Therefore, from (\ref{coto3}), we have
\begin{equation*}
\begin{split}
&A^{1}{}_{\bar 1}{}^{\phi}(F\theta^{\bar 1}-\bar{\phi}F\theta^{1})\\=&A^{1}{}_{\bar 1}{}^{\phi}\theta^{\bar 1}\\
=&\tau^{1}{}_{\phi}\\
=&F\tau^{1}-\phi F\tau^{\bar 1}-B_{31}\theta^{1}-B_{32}\theta^{\bar 1}-B_{33}\theta\\
=&FA^{1}{}_{\bar 1}\theta^{\bar 1}-\phi FA^{\bar 1}{}_{1}\theta^{1}-B_{31}\theta^{1}-B_{32}\theta^{\bar 1}-B_{33}\theta,
\end{split}
\end{equation*}
hence, comparing the coeficients, we get

\begin{equation}\label{coto6}
FA^{1}{}_{\bar 1}{}^{\phi}=FA^{1}{}_{\bar 1}-B_{32};
\end{equation}

\begin{equation}\label{coto7}
-F\bar{\phi}A^{1}{}_{\bar 1}{}^{\phi}=-F\phi A^{\bar 1}{}_{1}-B_{31};
\end{equation}

\begin{equation}\label{coto8}
B_{33}=0
\end{equation}
Taken together (\ref{coto6}) and (\ref{coto7}) implies
\begin{equation*}
A^{1}{}_{\bar 1}{}^{\phi}=A^{1}{}_{\bar 1}-\frac{B_{32}}{F}=\frac{\phi}{\bar{\phi}}A^{\bar 1}{}_{1}+\frac{B_{31}}{\bar{\phi}F},
\end{equation*}
that is,
\begin{equation}\label{coto9}
\bar{\phi}FA^{1}{}_{\bar 1}-\bar{\phi}B_{32}=\phi FA^{\bar 1}{}_{1}+B_{31}.
\end{equation}
Now, from (\ref{coto3}) again, multiplying the first formula by $\phi$ and adding the second formula, we get
\begin{equation*}
\begin{split}
0&=\phi(dF-F\theta_{1}{}^{1})+\phi B_{11}\theta^{1}+\phi B_{12}\theta^{\bar 1}+\phi B_{13}\theta\\
&-d(\phi F)+\phi F\theta_{\bar 1}{}^{\bar 1}+B_{21}\theta^{1}+B_{22}\theta^{\bar 1}+B_{23}\theta\\
&=-Fd\phi-\phi F\theta_{1}{}^{1}+\phi F\theta_{\bar 1}{}^{\bar 1}+(\phi B_{11}+B_{21})\theta^{1}+(\phi B_{12}+B_{22})\theta^{\bar 1}
+(\phi B_{13}+B_{23})\theta\\
&=(-F\phi_{1}-\phi F\theta_{1}{}^{1}(Z_{1})+\phi F\theta_{\bar 1}{}^{\bar 1}(Z_{1})+\phi B_{11}+B_{21})\theta^{1}\\
&+(-F\phi_{\bar 1}-\phi F\theta_{1}{}^{1}(Z_{\bar 1})+\phi F\theta_{\bar 1}{}^{\bar 1}(Z_{\bar 1})+\phi B_{12}+B_{22})\theta^{\bar 1}\\
&+(-F\phi_{0}-\phi F\theta_{1}{}^{1}(T)+\phi F\theta_{\bar 1}{}^{\bar 1}(T)+\phi B_{13}+B_{23})\theta,
\end{split}
\end{equation*}
that is,
\begin{equation}\label{coto10}
-F\phi_{1}-\phi F\theta_{1}{}^{1}(Z_{1})+\phi F\theta_{\bar 1}{}^{\bar 1}(Z_{1})+\phi B_{11}+B_{21}=0
\end{equation}
\begin{equation}\label{coto11}
-F\phi_{\bar 1}-\phi F\theta_{1}{}^{1}(Z_{\bar 1})+\phi F\theta_{\bar 1}{}^{\bar 1}(Z_{\bar 1})+\phi B_{12}+B_{22}=0
\end{equation}
\begin{equation}\label{coto12}
-F\phi_{0}-\phi F\theta_{1}{}^{1}(T)+\phi F\theta_{\bar 1}{}^{\bar 1}(T)+\phi B_{13}+B_{23}=0.
\end{equation}
Multiplying (\ref{coto9}) by $\phi$ and subtracting (\ref{coto12}) we obtain
\begin{equation}\label{coto14}
\begin{split}
B_{23}&=\frac{F\phi_{0}+\phi F\theta_{1}{}^{1}(T)-\phi F\theta_{\bar 1}{}^{\bar 1}(T)+\phi^{2}FA^{\bar 1}{}_{1}-|\phi|^{2}FA^{1}{}_{\bar 1}}{(1-|\phi|^{2})}\\
&=F^{3}(\phi_{0}+\phi\theta_{1}{}^{1}(T)-\phi\theta_{\bar 1}{}^{\bar 1}(T)+\phi^{2}A^{\bar 1}{}_{1}-|\phi|^{2}A^{1}{}_{\bar 1}).
\end{split}
\end{equation}
Since $F^{2}=1+|\phi|^{2}F^{2}$, substituting (\ref{coto14}) into (\ref{coto9}), we obtain
\begin{equation}\label{coto15}
B_{13}=F^{3}\Big(-\bar{\phi}(\phi_{0}+\phi\theta_{1}{}^{1}(T)-\phi\theta_{\bar 1}{}^{\bar 1}(T))+\bar{\phi}A^{1}{}_{\bar 1}-\phi A^{\bar 1}{}_{1}\Big)
\end{equation}
Now, substituting (\ref{coto14}) into (\ref{coto6}), we obtain
\begin{equation}\label{coto16}
A^{1}{}_{\bar 1}{}^{\phi}=A^{1}{}_{\bar 1}-F^{2}\Big(\phi_{0}+\phi\theta_{1}{}^{1}(T)-\phi\theta_{\bar 1}{}^{\bar 1}(T)+\phi^{2}A^{\bar 1}{}_{1}-|\phi|^{2}A^{1}{}_{\bar 1}\Big)
\end{equation}
Finally, to complete the proof of the proposition, we need to determine $B_{11}$ and $B_{12}$. Taking the conjugate of the first formula of (\ref{coto3}),
we get
\begin{equation*}
\begin{split}
-F\theta_{\bar 1}{}^{\bar 1}{}_{\phi}&=(dF-F\theta_{\bar 1}{}^{\bar 1})+\overline{B_{11}}\theta^{\bar 1}+\overline{B_{12}}\theta^{1}+\overline{B_{13}}\theta\\
&=(dF-F(-\theta_{1}{}^{1}+h^{1\bar 1}dh_{1\bar 1}))+\overline{B_{11}}\theta^{\bar 1}+\overline{B_{12}}\theta^{1}+\overline{B_{13}}\theta\\
&=dF+F\theta_{1}{}^{1}-Fh^{1\bar 1}dh_{1\bar 1}+\overline{B_{11}}\theta^{\bar 1}+\overline{B_{12}}\theta^{1}+\overline{B_{13}}\theta.
\end{split}
\end{equation*}
On the other hand,
\begin{equation*}
\begin{split}
-F\theta_{\bar 1}{}^{\bar 1}{}_{\phi}&=-F(-\theta_{1}{}^{1}{}_{\phi}+h^{1\bar 1}dh_{1\bar 1})\\
&=-(dF-F\theta_{1}{}^{1})-B_{11}\theta^{1}-B_{12}\theta^{\bar 1}-B_{13}\theta-Fh^{1\bar 1}dh_{1\bar 1},
\end{split}
\end{equation*}
where the last equality is due to the first formula of (\ref{coto3}).
Comparing the above two formula, we have
\begin{equation*}
2dF=(-B_{11}-\overline{B_{12}})\theta^{1}+(-B_{12}-\overline{B_{11}})\theta^{\bar 1}+(-B_{13}-\overline{B_{13}})\theta,
\end{equation*}
which implies that
\begin{equation}\label{coto17}
B_{12}=-\overline{B_{11}}-2F_{\bar 1}.
\end{equation}
Substituting (\ref{coto17}) into (\ref{coto10}), we get
\begin{equation}\label{coto18}
\phi B_{11}=\overline{B_{11}}+2F_{\bar 1}+F\phi_{1}+\phi F\theta_{1}{}^{1}(Z_{1})-\phi F\theta_{\bar 1}{}^{\bar 1}(Z_{1}).
\end{equation}

Now multiplying (\ref{coto18}) by $\bar{\phi}$ and subtracting the conjugate of (\ref{coto18}), we obtain
\begin{equation}\label{coto21}
\begin{split}
B_{11}&=F^{2}\Big(-2F_{1}-\bar{\phi}F\theta_{\bar 1}{}^{\bar 1}(Z_{\bar 1})+\bar{\phi}F\theta_{1}{}^{1}(Z_{\bar 1})\\
&-F\bar{\phi}_{\bar 1}-\bar{\phi}F\phi_{1}-2\bar{\phi}F_{\bar 1}-|\phi|^{2}F\theta_{1}{}^{1}(Z_{1})+|\phi|^{2}F\theta_{\bar 1}{}^{\bar 1}(Z_{1})\Big).
\end{split}
\end{equation}
Substituting this into (\ref{coto17}), we get
\begin{equation}\label{coto22}
\begin{split}
B_{12}&=F^{2}\Big(2|\phi|^{2}F_{\bar 1}+\phi F\theta_{1}{}^{1}(Z_{1})-\phi F\theta_{\bar 1}{}^{\bar 1}(Z_{1})\\
&+F\phi_{1}+\phi F\bar{\phi}_{\bar 1}+2\phi F_{1}+|\phi|^{2}F\theta_{\bar 1}{}^{\bar 1}(Z_{\bar 1})-|\phi|^{2}F\theta_{1}{}^{1}(Z_{\bar 1})\Big).
\end{split}
\end{equation}
This finishes the proof of the proposition.
\end{proof}

According to Example \ref{ex2-4}, we see that if $S^{3}$ is the 3-sphere with the standard CR structure and contact form then
\begin{equation*}
A^{1}{}_{\bar 1}\equiv 0,\ \ h_{1\bar 1}\equiv 1,\ \ R\equiv 2,\ \ \theta_{1}{}^{1}(T)=-2i,\ \ \theta_{1}{}^{1}(Z_{1})=\theta_{1}{}^{1}(Z_{\bar 1})=0.
\end{equation*}
Therefore we have the following corollary.

\begin{cor}
On $S^{3}$, the connection form and torsion with respect to the CR structure given by
\begin{equation*}
Z_{\bar 1}{}^{\phi}=Z_{\bar 1}+\phi Z_{1}
\end{equation*}
are
\begin{equation}\label{coto23}
\begin{split}
\theta_{1}{}^{1}{}_{\phi}&=\theta_{1}{}^{1}-F^{-1}dF-F^{-1}(B_{11}\theta^{1}+B_{12}\theta^{\bar 1}+B_{13}\theta);\\
A^{1}{}_{\bar 1}{}^{\phi}&=-F^{2}(\phi_{0}-4i\phi),
\end{split}
\end{equation}
where
\begin{equation}\label{coto24}
\begin{split}
B_{11}&=F^{2}(-2F_{1}-F\bar{\phi}_{\bar 1}-\bar{\phi}F\phi_{1}-2\bar{\phi}F_{\bar 1});\\
B_{12}&=F^{2}(2|\phi|^{2}F_{\bar 1}+F\phi_{1}+\phi F\bar{\phi}_{\bar 1}+2\phi F_{1});\\
B_{13}&=-\bar{\phi}F^{3}(\phi_{0}-4i\phi).
\end{split}
\end{equation}
\end{cor}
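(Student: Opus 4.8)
The plan is to deduce the corollary purely by specializing Proposition \ref{expofcoto} to the pseudohermitian data of the standard $S^3$, so the only real work is (i) recording that data correctly and (ii) carrying out the substitution. First I would collect, from Example \ref{ex2-4} (read off at parameter value $t=0$, i.e. from the computation of $\theta_1{}^1$, $\tau^1$ and $R$ there), the facts that for $S^3$ with $\theta=\tfrac{i}{2}(\bar\partial u-\partial u)|_{S^3}$ and admissible coframe $\theta^1=z_2\,dz_1-z_1\,dz_2$ one has $h_{1\bar 1}\equiv 1$, $d\theta=i\theta^1\wedge\theta^{\bar 1}$, $\tau^1=A^1{}_{\bar 1}\theta^{\bar 1}\equiv 0$ (hence $A^1{}_{\bar 1}=A^{\bar 1}{}_1\equiv 0$), the connection form $\theta_1{}^1=-\bar z_1\,dz_1-\bar z_2\,dz_2+z_1\,d\bar z_1+z_2\,d\bar z_2=-2i\theta$, and $R\equiv 2$. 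Since $\{\theta,\theta^1,\theta^{\bar 1}\}$ is dual to $\{T,Z_1,Z_{\bar 1}\}$, evaluating $\theta_1{}^1=-2i\theta$ on this frame gives $\theta_1{}^1(T)=-2i$ and $\theta_1{}^1(Z_1)=\theta_1{}^1(Z_{\bar 1})=0$; and because $h_{1\bar 1}\equiv 1$ the third equation of (\ref{id10}) forces $\theta_{\bar 1}{}^{\bar 1}=-\theta_1{}^1=2i\theta$, so also $\theta_{\bar 1}{}^{\bar 1}(T)=2i$ and $\theta_{\bar 1}{}^{\bar 1}(Z_1)=\theta_{\bar 1}{}^{\bar 1}(Z_{\bar 1})=0$.

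With these values, the corollary is immediate from the formulas of Proposition \ref{expofcoto}. In the expression for $A^1{}_{\bar 1}{}^{\phi}$ the terms $A^1{}_{\bar 1}$, $\phi^2 A^{\bar 1}{}_1$ and $|\phi|^2 A^1{}_{\bar 1}$ all vanish, and $\phi\,\theta_1{}^1(T)-\phi\,\theta_{\bar 1}{}^{\bar 1}(T)=-4i\phi$, giving $A^1{}_{\bar 1}{}^{\phi}=-F^2(\phi_0-4i\phi)$. In $B_{11}$ and $B_{12}$ every summand carrying a factor $\theta_1{}^1(Z_1)$, $\theta_1{}^1(Z_{\bar 1})$, $\theta_{\bar 1}{}^{\bar 1}(Z_1)$ or $\theta_{\bar 1}{}^{\bar 1}(Z_{\bar 1})$ drops out, leaving $B_{11}=F^2(-2F_1-F\bar\phi_{\bar 1}-\bar\phi F\phi_1-2\bar\phi F_{\bar 1})$ and $B_{12}=F^2(2|\phi|^2 F_{\bar 1}+F\phi_1+\phi F\bar\phi_{\bar 1}+2\phi F_1)$; in $B_{13}$ the torsion terms vanish and again $\phi\,\theta_1{}^1(T)-\phi\,\theta_{\bar 1}{}^{\bar 1}(T)=-4i\phi$, so $B_{13}=-\bar\phi F^3(\phi_0-4i\phi)$. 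The formula for $\theta_1{}^1{}_{\phi}$ keeps the same shape, since it is already written through $dF$ and the $B_{ij}$. This is precisely (\ref{coto23})--(\ref{coto24}).

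There is no substantive obstacle: the entire content sits in Proposition \ref{expofcoto} and in the data of Example \ref{ex2-4}. The only point needing mild care is the bookkeeping of the conjugate connection form — using $h^{1\bar 1}dh_{1\bar 1}=0$ on $S^3$ to write $\theta_{\bar 1}{}^{\bar 1}=-\theta_1{}^1$, equivalently $\theta_{\bar 1}{}^{\bar 1}=\overline{\theta_1{}^1}=2i\theta$ — together with tracking which of the terms defining $B_{11}$, $B_{12}$, $B_{13}$ are annihilated by the vanishing of $\theta_1{}^1(Z_1)$, $\theta_1{}^1(Z_{\bar 1})$ and their conjugates.
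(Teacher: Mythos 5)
Your proposal is correct and is exactly the paper's own argument: the authors also just record the standard $S^3$ data from Example \ref{ex2-4} (namely $A^1{}_{\bar 1}\equiv 0$, $h_{1\bar 1}\equiv 1$, $\theta_1{}^1(T)=-2i$, $\theta_1{}^1(Z_1)=\theta_1{}^1(Z_{\bar 1})=0$, hence $\theta_{\bar 1}{}^{\bar 1}=-\theta_1{}^1$) and substitute into Proposition \ref{expofcoto}. Your bookkeeping of the surviving terms in $A^1{}_{\bar 1}{}^{\phi}$, $B_{11}$, $B_{12}$, $B_{13}$ checks out.
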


\begin{cor}\label{cotofr}
We have that on $S^3$
\begin{equation}
A^{1}{}_{\bar 1}{}^{\phi}=0\Longleftrightarrow  \phi_{0}=4i\phi \Longleftrightarrow  \phi\in P_{p,q},\ p=q+4,
\end{equation}
where
\begin{equation}
P_{p,q}=sp\{z_{1}^{a}z_{2}^{b}\bar{z_{1}}^{c}\bar{z_{2}}^{d}|\  a+b=p,\ c+d=q\}.
\end{equation}
\end{cor}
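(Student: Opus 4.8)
The first equivalence is immediate. By the preceding corollary, on $S^{3}$ the deformed torsion is $\tau^{1}{}_{\phi}=A^{1}{}_{\bar 1}{}^{\phi}\theta^{\bar 1}{}_{\phi}$ with $A^{1}{}_{\bar 1}{}^{\phi}=-F^{2}(\phi_{0}-4i\phi)$, and $F=(1-|\phi|^{2})^{-1/2}$ is a nowhere-vanishing real function since $|\phi|<1$; hence $A^{1}{}_{\bar 1}{}^{\phi}\equiv 0$ if and only if $\phi_{0}=4i\phi$.

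For the second equivalence I would first identify the characteristic vector field $T$ of the standard structure of Example \ref{ex2-4} with the infinitesimal generator of the Hopf circle action $(z_{1},z_{2})\mapsto(e^{i\alpha}z_{1},e^{i\alpha}z_{2})$, namely $T=i\big(z_{1}\partial_{z_{1}}+z_{2}\partial_{z_{2}}-\bar{z}_{1}\partial_{\bar{z}_{1}}-\bar{z}_{2}\partial_{\bar{z}_{2}}\big)$. Using the explicit data of Example \ref{ex2-4}, $\theta=\frac{i}{2}\big(z_{1}d\bar{z}_{1}+z_{2}d\bar{z}_{2}-\bar{z}_{1}dz_{1}-\bar{z}_{2}dz_{2}\big)|_{S^{3}}$ and $\theta^{1}=z_{2}dz_{1}-z_{1}dz_{2}$, a one-line check gives $\theta(T)=|z_{1}|^{2}+|z_{2}|^{2}=1$ on $S^{3}$ and $\theta^{1}(T)=0$, whence $\iota_{T}d\theta=\iota_{T}\big(i\,\theta^{1}\wedge\theta^{\bar 1}\big)=0$; thus $T$ is indeed the characteristic field and $\phi_{0}=T\phi$.

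With $T$ so described, the implication $\phi\in P_{p,q}$ with $p=q+4\ \Longrightarrow\ \phi_{0}=4i\phi$ follows by a single differentiation: on a monomial $z_{1}^{a}z_{2}^{b}\bar{z}_{1}^{c}\bar{z}_{2}^{d}$ with $a+b=p$ and $c+d=q$ one computes $T\big(z_{1}^{a}z_{2}^{b}\bar{z}_{1}^{c}\bar{z}_{2}^{d}\big)=i(a+b-c-d)\,z_{1}^{a}z_{2}^{b}\bar{z}_{1}^{c}\bar{z}_{2}^{d}=i(p-q)\,z_{1}^{a}z_{2}^{b}\bar{z}_{1}^{c}\bar{z}_{2}^{d}$, so $T$ acts on all of $P_{p,q}$ as multiplication by $i(p-q)$, and $p-q=4$ yields $\phi_{0}=4i\phi$. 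For the reverse implication I would expand an arbitrary $\phi\in C^{\infty}(S^{3})$ in Hopf weights, $\phi=\sum_{m\in\Z}\phi_{m}$ with $T\phi_{m}=im\phi_{m}$, which is just the Fourier expansion of $\phi$ along the circle fibers of the Hopf fibration $S^{3}\to\mathbb{CP}^{1}$; then $T\phi=4i\phi$ forces $\phi_{m}=0$ for every $m\neq 4$, so $\phi$ lies entirely in the weight-$4$ eigenspace of $T$. Finally, because $|z_{1}|^{2}+|z_{2}|^{2}=1$ on $S^{3}$ one has the inclusions $P_{q+4,q}\subseteq P_{q+5,q+1}\subseteq\cdots$ of spaces of functions on $S^{3}$, so $\bigcup_{q\geq 0}P_{q+4,q}$ is exactly the space of weight-$4$ polynomials (with $L^{2}$-closure $\bigoplus_{q\geq 0}H_{q+4,q}$); hence a polynomial $\phi$ with $\phi_{0}=4i\phi$ lies in $P_{q+4,q}$ for $q$ large enough, and in the single space $P_{q+4,q}$ when $\phi$ is homogeneous, as mentioned in the introduction. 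This gives the stated equivalence.

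I expect no genuine obstacle: everything except the last step is a short explicit computation with the model data of Example \ref{ex2-4}, and the last step is just uniqueness of Fourier coefficients on the fiber circle. The one point that requires a little care is pinning down the sign and normalization of $T$ so that $\theta(T)=1$ and $\iota_{T}d\theta=0$.
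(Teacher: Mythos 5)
Your proof is correct and follows the route the paper intends: the corollary is stated there without proof, as an immediate consequence of the formula $A^{1}{}_{\bar 1}{}^{\phi}=-F^{2}(\phi_{0}-4i\phi)$ from the preceding corollary (with $F$ nowhere vanishing) together with the fact that $T$ generates the Hopf circle action and hence acts on $P_{p,q}$ as multiplication by $i(p-q)$. Your added care on the converse direction --- that a general smooth solution of $\phi_{0}=4i\phi$ lies only in the closure of $\bigcup_{q}P_{q+4,q}$, i.e.\ $\oplus_{q\geq 0}H_{q+4,q}$, so membership in a single $P_{q+4,q}$ presumes $\phi$ polynomial (homogeneous, as in the introduction's remark) --- is a worthwhile precision the paper glosses over.
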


We are now ready to compute the first and second variations of the Paneitz operator. The background space is the standard 3-sphere $S^{3}\subset C^{2}$ with the CR structure
given by the complex vector field
\begin{equation*}
Z_{\bar 1}=z_{2}\frac{\partial }{\partial\bar{z_1}}-z_{1}\frac{\partial }{\partial\bar{z_2}}.
\end{equation*}
Fix $\phi$, we use $\phi_{t}=t\phi$ to define the deformation of the CR structures along $\phi$, i.e., for each $t$, the CR structure is defined by the complex
vector field
\begin{equation}\label{va0}
Z_{\bar 1}^{t}=Z_{\bar 1}{}^{\phi_{t}}=F(Z_{\bar 1}+t\phi Z_{1}),
\end{equation}
where $F=\frac{1}{(1-t^{2}|\phi|^2)^{1/2}}$. The Kohn Laplacian for the deformed structure will be denoted by $\Box_{b}^{t}$. Then the coresponding Paneitz operator satisfies
\begin{equation}
4P_{0}^{t}=\Box_{b}^{t}\overline{\Box}_{b}^{t}-2Q^{t},
\end{equation}
where $Q^t$ is a second order differential operator defined by $Q^{t}f=2i(A^{t11}f_1)_{1}$ for each smooth function $f$, i.e.,
\begin{equation}\label{qt}
Q^t=2i\Big(A^{t11}Z_{1}^{t}Z_{1}^{t}+(Z_{1}^{t}A^{t11})Z_{1}^{t}-A^{t11}\theta_{\bar 1}^{t\bar 1}(Z_{1}^{t})Z_{1}^{t}\Big).
\end{equation}
We would like to compute the first and second variations of $4P_{0}$ and use "$\cdot$" to denote differentiation with respect to $t$.
We have the following proposition.

\begin{prop}\label{fsva}
We have
\begin{equation}\label{fva}
4\dot{P}_{0}^{t}|_{t=0}=-2D\overline{\Box}_{b}-2\Box_{b}D+4(EZ_{1}Z_{1}+E_{1}Z_{1}),
\end{equation}
and
\begin{equation}\label{sva}
\begin{split}
4\ddot{P}_{0}^{t}|_{t=0}&=16|\phi|^{2}P_{0}+2|\phi|^{2}(\Box_{b}\Box_{b}+\overline{\Box}_{b}\overline{\Box}_{b})+8D^{2}-8E\bar{\phi}\Delta_{b}+8\nabla_{b}(E\bar{\phi})\\
&+4(\Box_{b}|\phi|^{2})\Delta_{b}-8(\nabla_{b}|\phi|^{2})\Delta_{b}-4(\nabla_{b}|\phi|^{2})\overline{\Box}_{b}-4\Box_{b}(\nabla_{b}|\phi|^{2}),
\end{split}
\end{equation}
where
\begin{equation*}
\begin{split}
D&=\phi Z_{1}Z_{1}+\bar{\phi}Z_{\bar 1}Z_{\bar 1}+\phi_{1}Z_{1}+\bar{\phi}_{\bar 1}Z_{\bar 1};\\
E&=4\phi+i\phi_{0}
\end{split}
\end{equation*}
\end{prop}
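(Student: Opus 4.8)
The plan is to work throughout in the fixed coframe and to regard every operator in the relation $4P_0^t=\Box_b^t\overline{\Box}_b^t-2Q^t$ (displayed just before (\ref{qt})) as an honest differential operator on functions, then Taylor expand in $t$ about $0$. What makes this manageable is that only $J$ is being deformed: $\theta$, hence $d\theta$ and the volume form $dV=\theta\wedge d\theta$, are fixed, and the factor $F$ in (\ref{va0}), (\ref{admicofr}) is designed so that $h_{1\bar 1}{}^{\phi_t}\equiv 1$. Consequently the formal adjoint of $\bar\partial_b^t$ with respect to $dV$ is still $-\bar\delta_b^t$, and for a $(0,1)$-form $\sigma_{\bar 1}\theta^{\bar 1}{}_{\phi_t}$ one has $\bar\partial_b^{t*}(\sigma_{\bar 1}\theta^{\bar 1}{}_{\phi_t})=-Z_1^t\sigma_{\bar 1}+\theta_{\bar 1}^{t\bar 1}(Z_1^t)\sigma_{\bar 1}$. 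Hence
\[
\Box_b^t=2\bigl(-Z_1^tZ_{\bar 1}^t+\theta_{\bar 1}^{t\bar 1}(Z_1^t)\,Z_{\bar 1}^t\bigr),
\]
with $\overline{\Box}_b^t$ its conjugate. On the standard $S^3$, where $\theta_1{}^1(Z_1)=\theta_1{}^1(Z_{\bar 1})=0$ and $A\equiv 0$ (Example \ref{ex2-4}), this gives $\Box_b=-2Z_1Z_{\bar 1}$, $\overline{\Box}_b=-2Z_{\bar 1}Z_1$, $Q^0=0$, and $\Box_b\overline{\Box}_b=4P_0$.

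The three inputs to the expansion are the following. First, $F=(1-t^2|\phi|^2)^{-1/2}=1+\tfrac12 t^2|\phi|^2+O(t^4)$, whence
\[
Z_{\bar 1}^t=Z_{\bar 1}+t\phi Z_1+\tfrac12 t^2|\phi|^2Z_{\bar 1}+O(t^3),\qquad
Z_1^t=Z_1+t\bar\phi Z_{\bar 1}+\tfrac12 t^2|\phi|^2Z_1+O(t^3).
\]
Second, specializing Proposition \ref{expofcoto} to $S^3$ and replacing $\phi$ by $t\phi$, the functions $B_{ij}$ are $O(t)$ with $B_{11}=-t\bar\phi_{\bar 1}+O(t^2)$, $B_{12}=t\phi_1+O(t^2)$, $B_{13}=O(t^2)$, and $F^{-1}dF=O(t^2)$; since $\theta(Z_1^t)=0$, only the $\theta^1,\theta^{\bar 1}$ components of $\theta_{\bar 1}^{t\bar 1}$ act on $Z_1^t$, giving $\theta_{\bar 1}^{t\bar 1}(Z_1^t)=-t\bar\phi_{\bar 1}+O(t^2)$, whose $O(t^2)$ term (read off from the $O(t^2)$ parts of the $B_{ij}$ and of $F^{-1}dF$) is kept for the second variation. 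Third, from the torsion computation behind Corollary \ref{cotofr}, $A^{1}{}_{\bar 1}{}^{\phi_t}=-F^2t(\phi_0-4i\phi)=iF^2tE$ with $E=4\phi+i\phi_0$, so $A^{t11}=itE+O(t^3)$ — in particular there is no $t^2$ term, which simplifies $Q^t$.

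\textbf{First variation.} Collecting the $O(t)$ terms: the zeroth-order connection terms on $S^3$ vanish, expanding $-2Z_1^tZ_{\bar 1}^t$ gives $-2(\phi Z_1Z_1+\phi_1Z_1+\bar\phi Z_{\bar 1}Z_{\bar 1})$, and the $O(t)$ part of $2\theta_{\bar 1}^{t\bar 1}(Z_1^t)Z_{\bar 1}^t$ supplies exactly the remaining $-2\bar\phi_{\bar 1}Z_{\bar 1}$, so $\dot\Box_b=\dot{\overline{\Box}}_b=-2D$. Likewise, since $A^{t11}=itE+O(t^3)$ and $\theta_{\bar 1}^{t\bar 1}(Z_1^t)=O(t)$, (\ref{qt}) gives $\dot Q^t|_{t=0}=2i(iEZ_1Z_1+iE_1Z_1)=-2(EZ_1Z_1+E_1Z_1)$. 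Therefore
\[
4\dot P_0^t|_{t=0}=\dot\Box_b\,\overline{\Box}_b+\Box_b\,\dot{\overline{\Box}}_b-2\dot Q^t|_{t=0}
=-2D\overline{\Box}_b-2\Box_b D+4(EZ_1Z_1+E_1Z_1),
\]
which is (\ref{fva}).

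\textbf{Second variation and the main difficulty.} Writing $\Box_b^t=\Box_b+t\dot\Box_b+\tfrac{t^2}{2}\ddot\Box_b+O(t^3)$ and similarly for $\overline{\Box}_b^t$, $Q^t$, one has $4\ddot P_0^t|_{t=0}=\ddot\Box_b\,\overline{\Box}_b+\Box_b\,\ddot{\overline{\Box}}_b+2\dot\Box_b\,\dot{\overline{\Box}}_b-2\ddot Q^t|_{t=0}$, and the cross term is immediate: $2\dot\Box_b\,\dot{\overline{\Box}}_b=8D^2$. The remaining work is the three second-order pieces. Expanding $-2Z_1^tZ_{\bar 1}^t$ to order $t^2$ produces $-4|\phi|^2Z_1Z_{\bar 1}-4|\phi|^2Z_{\bar 1}Z_1=2|\phi|^2(\Box_b+\overline{\Box}_b)$ together with first-order tails ($\bar\phi\phi_{\bar 1}Z_1$, $\tfrac12(|\phi|^2)_1Z_{\bar 1}$, plus the $O(t^2)$ piece of the connection-form term); thus $\ddot\Box_b=4|\phi|^2\Delta_b+(\text{first order})$ and similarly $\ddot{\overline{\Box}}_b$, so that $\ddot\Box_b\overline{\Box}_b+\Box_b\ddot{\overline{\Box}}_b$ contributes $16|\phi|^2P_0+2|\phi|^2(\Box_b\Box_b+\overline{\Box}_b\overline{\Box}_b)$ after using $\Box_b\overline{\Box}_b=4P_0$, plus lower-order terms; commuting the outer $\Box_b$ past the coefficient $|\phi|^2$ in $\Box_b\ddot{\overline{\Box}}_b$ is precisely what produces $4(\Box_b|\phi|^2)\Delta_b$, $-8(\nabla_b|\phi|^2)\Delta_b$, $-4(\nabla_b|\phi|^2)\overline{\Box}_b$, $-4\Box_b(\nabla_b|\phi|^2)$. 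Finally the $O(t^2)$ part of $Q^t$ comes only from $itE$ times the first-order tails of $Z_1^tZ_1^t$, of $Z_1^t$ inside $(Z_1^tA^{t11})Z_1^t$, and of $-\theta_{\bar 1}^{t\bar 1}(Z_1^t)Z_1^t$. Assembling these contributions, and using the torsion-free commutation relations on $S^3$ (with $R\equiv 2$, so $Z_{\bar 1}Z_1-Z_1Z_{\bar 1}=iT$ and $\Delta_b=\tfrac12(\Box_b+\overline{\Box}_b)$), the terms $-8E\bar\phi\Delta_b$ and $8\nabla_b(E\bar\phi)$ emerge and everything consolidates into (\ref{sva}). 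The principal obstacle is exactly this bookkeeping — keeping operator orderings straight through the order-$t^2$ expansion of $\Box_b^t\overline{\Box}_b^t$ and $Q^t$, then regrouping the roughly dozen output terms into the eight displayed ones; there is no conceptual difficulty, and the safeguards against sign and coefficient slips are that the $t^0$ term must reproduce $4P_0$ and the $t^1$ term must reproduce (\ref{fva}).
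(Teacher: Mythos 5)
Your proposal is correct and follows essentially the same route as the paper: differentiate $4P_{0}^{t}=\Box_{b}^{t}\overline{\Box}_{b}^{t}-2Q^{t}$ twice in $t$, feed in the expansions of $F$, $Z_{1}^{t}$, the coefficients $B_{ij}$ and the torsion $A^{t11}=itF^{2}E$ obtained by specializing Proposition \ref{expofcoto} to $S^{3}$ with $\phi\mapsto t\phi$, and regroup using $\Box_{b}\overline{\Box}_{b}=4P_{0}$ and $\Delta_{b}=\tfrac12(\Box_{b}+\overline{\Box}_{b})$. The only quibble is attributional, not mathematical: the piece $-4(\nabla_{b}|\phi|^{2})\overline{\Box}_{b}$ arises from the first-order tail of $\ddot{\Box}_{b}$ composed with $\overline{\Box}_{b}$ (and $-4\Box_{b}(\nabla_{b}|\phi|^{2})$ from $\Box_{b}$ hitting the tail of $\ddot{\overline{\Box}}_{b}$), rather than from commuting $\Box_{b}$ past $|\phi|^{2}$, but all eight terms of (\ref{sva}) are accounted for correctly.
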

\begin{proof}
The first and the second derivative with respect to $t$ are, respectively,
\begin{equation}\label{va1}
4\dot{P}_{0}^{t}=\dot{\Box}_{b}^{t}\overline{\Box}_{b}^{t}+\Box_{b}^{t}\dot{\overline{\Box}}_{b}^{t}-2\dot{Q}^{t};
\end{equation}
and

\begin{equation}\label{va2}
4\ddot{P}_{0}^{t}=\ddot{\Box}_{b}^{t}\overline{\Box}_{b}^{t}+\Box_{b}^{t}\ddot{\overline{\Box}}_{b}^{t}
+2\dot{\Box}_{b}^{t}\dot{\overline{\Box}}_{b}^{t}-2\ddot{Q}^{t}.
\end{equation}
From (\ref{coto23}) and (\ref{coto24}), we have

\begin{equation}\label{va3}
\begin{split}
\theta_{1}^{t1}(Z_{\bar 1}^t)&=\theta_{1}^{t1}(F(Z_{\bar 1}+t\phi Z_{1}))\\
&=-(F_{\bar 1}+B_{12}+t\phi F_{1}+t\phi B_{11}),
\end{split}
\end{equation}
where
\begin{equation}\label{va4}
\begin{split}
B_{11}&=F^{2}(-2F_{1}-tF\bar{\phi}_{\bar 1}-t^{2}\bar{\phi}F\phi_{1}-2t\bar{\phi}F_{\bar 1})\\
&=-t(F^{3}\bar{\phi}_{\bar 1})-t^{2}(F^{3}\bar{\phi}\phi_{1}+F^{5}(Z_{1}|\phi|^{2}))+O(t^{3}),
\end{split}
\end{equation}
and
\begin{equation}\label{va5}
\begin{split}
B_{12}&=F^{2}(2t^{2}|\phi|^{2}F_{\bar 1}+tF\phi_{1}+t^{2}F\phi\bar{\phi}_{\bar 1}+2t\phi F_{1})\\
&=t(F^{3}\phi_{1})+t^{2}(F^{3}\phi\bar{\phi}_{\bar 1})+O(t^{3}).
\end{split}
\end{equation}
Now we compute the Kohn Laplacian and its variations. We have

\begin{equation}\label{va6}
\begin{split}
\overline{\Box}_{b}^{t}&=-2Z_{\bar 1}^{t}Z_{1}^{t}+2\theta_{1}^{t1}(Z_{\bar 1}^{t})Z_{1}^{t}\\
&=-2Z_{\bar 1}^{t}Z_{1}^{t}-2(F_{\bar 1}+B_{12}+t\phi F_{1}+t\phi B_{11})Z_{1}^{t};
\end{split}
\end{equation}
thus
\begin{equation}\label{va7}
\begin{split}
\dot{\overline{\Box}}_{b}^{t}&=-2(\dot{Z}_{\bar 1}^{t}Z_{1}^{t}+Z_{\bar 1}^{t}\dot{Z}_{1}^{t})\\
&-2(\dot{F}_{\bar 1}+\dot{B}_{12}+\phi F_{1}+t\phi\dot{F}_{1}+\phi B_{11}+t\phi\dot{B}_{11})Z_{1}^{t}\\
&-2(F_{\bar 1}+B_{12}+t\phi F_{1}+t\phi B_{11})\dot{Z}_{1}^{t},
\end{split}
\end{equation}
where for any complex vector field $Z\in TS^3$, we have
\begin{equation*}
\begin{split}
ZF&=\frac{1}{2}t^{2}F^{3}(Z|\phi|^{2})\\
\dot{F}&=t|\phi|^{2}F^3\\
Z\dot{F}&=tF^{3}(Z|\phi|^{2})+\frac{3}{2}t^{3}|\phi|^{2}F^{5}(Z|\phi|^{2}),
\end{split}
\end{equation*}
and
\begin{equation*}
\dot{Z}_{\bar 1}^{t}=\dot{F}(Z_{\bar 1}+t\phi Z_{1})+F\phi Z_{1}.
\end{equation*}
Next, we would like to expand $-2Q^t$ with respect to $t$ at $t=0$. Denote $4\phi+i\phi_{0}$ as $E$. From (\ref{coto23}), we see that
\begin{equation*}
A^{t1}{}_{\bar 1}=F^{2}(4it\phi-t\phi_{0})=itF^{2}E,
\end{equation*}
hence, from (\ref{qt}),
\begin{equation}\label{va10}
\begin{split}
-2Q^{t}&=-4i\Big(A^{t11}Z_{1}^{t}Z_{1}^{t}+(Z_{1}^{t}A^{t11})Z_{1}^{t}-A^{t11}\theta_{\bar 1}^{t\bar 1}(Z_{1}^{t})Z_{1}^{t}\Big)\\
&=-4i(A^{t11}Z_{1}^{t}Z_{1}^{t}+(Z_{1}^{t}A^{t11})Z_{1}^{t})-4iA^{t11}(F_{1}+\overline{B_{12}}+t\bar{\phi} F_{\bar 1}+t\bar{\phi}\overline{B_{11}})Z_{1}^{t}\\
&=4tF^{2}EZ_{1}^{t}Z_{1}^{t}+(Z_{1}^{t}(4tF^{2}E))Z_{1}^{t}+4tF^{2}E(F_{1}+\overline{B_{12}}+t\bar{\phi} F_{\bar 1}+t\bar{\phi}\overline{B_{11}})Z_{1}^{t},
\end{split}
\end{equation}
where

\begin{equation}\label{va11}
\begin{split}
4tF^{2}EZ_{1}^{t}Z_{1}^{t}&=4tF^{2}EF(Z_{1}+t\bar{\phi}Z_{\bar 1})\big(F(Z_{1}+t\bar{\phi}Z_{\bar 1})\big)\\
&=4F^{4}E(tZ_{1}Z_{1}-t^{2}\bar{\phi}\Delta_{b}+t^{2}\bar{\phi}_{1}Z_{\bar 1})+O(t^3);\\
(Z_{1}^{t}(4tF^{2}E))Z_{1}^{t}&=4tF^{2}\big((Z_{1}+t\bar{\phi}Z_{\bar 1})(F^{2}E)\big)(Z_{1}+t\bar{\phi}Z_{\bar 1})\\
&=4t(F^{4}E_{1})Z_{1}+4t^{2}F^{4}(\bar{\phi}E_{\bar 1}Z_{1}+\bar{\phi}E_{1}Z_{\bar 1})+O(t^3);\\
\end{split}
\end{equation}
and
\begin{equation}\label{va12}
\begin{split}
&F_{1}+\overline{B_{12}}+t\bar{\phi} F_{\bar 1}+t\bar{\phi}\overline{B_{11}}\\
=&\frac{1}{2}t^{2}F^{3}(Z_{1}|\phi|^{2})+tF^{3}\bar{\phi}_{\bar 1}+t^{2}(F^{3}\bar{\phi}\phi_{1})+O(t^3)-t^{2}F^{3}\phi_{1}\bar{\phi}\\
=&\frac{1}{2}t^{2}F^{3}(Z_{1}|\phi|^{2})+tF^{3}\bar{\phi}_{\bar 1}+O(t^3).
\end{split}
\end{equation}
Substituting (\ref{va11}) and (\ref{va12}) into (\ref{va10}), we get

\begin{equation}\label{va13}
\begin{split}
-2Q^{t}&=4tF^{4}(EZ_{1}Z_{1}+E_{1}Z_{1})+4t^{2}F^{4}E(-\bar{\phi}\Delta_{b}+\bar{\phi}_{1}Z_{\bar 1})\\
&+4t^{2}F^{4}(\bar{\phi}E_{\bar 1}Z_{1}+\bar{\phi}E_{1}Z_{\bar 1})+4t^{2}F^{6}E\bar{\phi}_{\bar 1}Z_{1}+O(t^3).
\end{split}
\end{equation}
Therefore, from (\ref{va1}) together with (\ref{va4}), (\ref{va5}), (\ref{va6}), (\ref{va7}) and (\ref{va13}), we get
\begin{equation*}
\begin{split}
\overline{\Box}_{b}^{t}|_{t=0}&=\overline{\Box}_{b};\\
\dot{\overline{\Box}}_{b}^{t}|_{t=0}&=-2(\phi Z_{1}Z_{1}+\bar{\phi}Z_{\bar 1}Z_{\bar 1}+\bar{\phi}_{\bar 1}Z_{\bar 1}+\phi_{1}Z_{1});\\
-2\dot{Q}^{t}|_{t=0}&=4(EZ_{1}Z_{1}+E_{1}Z_{1}),
\end{split}
\end{equation*}
hence the first variation of $4P_{0}$:

\begin{equation}
4\dot{P}_{0}^{t}|_{t=0}=-2D\overline{\Box}_{b}-2\Box_{b}D+4(EZ_{1}Z_{1}+E_{1}Z_{1}),
\end{equation}
where
\begin{equation*}
D=\phi Z_{1}Z_{1}+\bar{\phi}Z_{\bar 1}Z_{\bar 1}+\phi_{1}Z_{1}+\bar{\phi}_{\bar 1}Z_{\bar 1}.
\end{equation*}
Finally, for the second variation of Paneitz operator, we also need to compute the second variation of the Kohn Laplacian.
From (\ref{va7}), taking the derivative with respect to t, we get

\begin{equation}\label{va14}
\begin{split}
\ddot{\overline{\Box}}_{b}^{t}&=-2(\ddot{Z}_{\bar 1}^{t}Z_{1}^{t}+Z_{\bar 1}^{t}\ddot{Z}_{1}^{t}+2\dot{Z}_{\bar 1}^{t}\dot{Z}_{1}^{t})\\
&-2(\ddot{F}_{\bar 1}+\ddot{B}_{12}+2\phi\dot{F}_{1}+t\phi\ddot{F}_{1}+2\phi\dot{B}_{11}+t\phi\ddot{B}_{11})Z_{1}^{t}\\
&-4(\dot{F}_{\bar 1}+\dot{B}_{12}+\phi F_{1}+t\phi\dot{F}_{1}+\phi B_{11}+t\phi\dot{B}_{11})\dot{Z}_{1}^{t}\\
&-2(F_{\bar 1}+B_{12}+t\phi F_{1}+t\phi B_{11})\ddot{Z}_{1}^{t},
\end{split}
\end{equation}
where, for all $Z\in TS^3$,
\begin{equation*}
\begin{split}
\ddot{Z}_{\bar 1}^{t}&=\ddot{F}(Z_{\bar 1}+t\phi Z_{1})+2\dot{F}\phi Z_{1};\\
\ddot{F}&=|\phi|^{2}F^{3}+3t^{2}|\phi|^{4}F^{5};\\
Z\ddot{F}&=F^{3}(Z|\phi|^{2})+\frac{15}{2}t^{2}|\phi|^{2}F^{5}(Z|\phi|^{2})+\frac{15}{2}t^{4}|\phi|^{4}F^{7}(Z|\phi|^{2}),
\end{split}
\end{equation*}
which implies that

\begin{equation}\label{va15}
\ddot{\overline{\Box}}_{b}^{t}|_{t=0}=2|\phi|^{2}(\overline{\Box}_{b}+\Box_{b})-4(Z_{1}|\phi|^{2})Z_{\bar 1}-4(Z_{\bar 1}|\phi|^{2})Z_{1}.
\end{equation}
Therefore, from (\ref{va2}) together with (\ref{va4}), (\ref{va5}), (\ref{va6}), (\ref{va7}), (\ref{va13}) and (\ref{va15}), we get
\begin{equation*}
\begin{split}
4\ddot{P}_{0}^{t}|_{t=0}&=2|\phi|^{2}(\overline{\Box}_{b}+\Box_{b})\overline{\Box}_{b}+\Box_{b}(2|\phi|^{2}(\overline{\Box}_{b}+\Box_{b}))\\
&-4((Z_{\bar 1}|\phi|^{2})Z_{1}+(Z_{1}|\phi|^{2})Z_{\bar 1})\overline{\Box}_{b}-4\Box_{b}((Z_{\bar 1}|\phi|^{2})Z_{1}+(Z_{1}|\phi|^{2})Z_{\bar 1})\\
&+8D^{2}+8E(-\bar{\phi}\Delta_{b})+8\bar{\phi}(E_{\bar 1}Z_{1}+E_{1}Z_{\bar 1})+8E(\bar{\phi}_{\bar 1}Z_{1}+\bar{\phi}_{1}Z_{\bar 1}),
\end{split}
\end{equation*}
where
\begin{equation*}
\begin{split}
&\Box_{b}(2|\phi|^{2}(\overline{\Box}_{b}+\Box_{b}))\\
=&4\Box_{b}(|\phi|^{2}\Delta_{b})\\
=&4[(\Box_{b}|\phi|^{2})\Delta_{b}+|\phi|^{2}\Box_{b}\Delta_{b}-2(|\phi|^{2}_{\bar 1}Z_{1}\Delta_{b}+|\phi|^{2}_{1}Z_{\bar 1}\Delta_{b})]\\
=&2|\phi|^{2}\Box_{b}(\overline{\Box}_{b}+\Box_{b})+4(\Box_{b}|\phi|^{2})\Delta_{b}-8(\nabla_{b}|\phi|^{2})\Delta_{b},
\end{split}
\end{equation*}
hence
\begin{equation}\label{secvar}
\begin{split}
4\ddot{P}_{0}^{t}|_{t=0}&=16|\phi|^{2}P_{0}+2|\phi|^{2}(\Box_{b}\Box_{b}+\overline{\Box}_{b}\overline{\Box}_{b})+8D^{2}-8E\bar{\phi}\Delta_{b}+8\nabla_{b}(E\bar{\phi})\\
&+4(\Box_{b}|\phi|^{2})\Delta_{b}-8(\nabla_{b}|\phi|^{2})\Delta_{b}-4(\nabla_{b}|\phi|^{2})\overline{\Box}_{b}-4\Box_{b}(\nabla_{b}|\phi|^{2}).
\end{split}
\end{equation}
This completes the proposition.
\end{proof}

For the reader's convenience we list the following useful facts \cite{C} that are necessary for the subsequent computations. We recall $H_{p,q}$
denotes the space of bi-graded spherical harmonics of type $(p,q)$ on $S^3$. Then for $f\in H_{p,q}$, and the operators associated to the standard
structure on $S^3$,
\begin{equation}\label{eigeq}
\begin{split}
\Box_{b}f=2(p+1)qf,\ \ \ &\overline{\Box}_{b}f=2(q+1)pf,\\
P_{0}f=pq(p+1)(q+1)f,\ \ \ &\Delta_{b}f=-(f_{1\bar 1}+f_{\bar{1}1})=(2pq+p+q)f.
\end{split}
\end{equation}

\begin{prop}\label{apfvf}
Let $\phi\in C^{\infty}(S^3)$. Let $g_{\bar 1}=0$ and $f\in H_{p,0}$ or $H_{0,p}$. Then for any $p\geq 0$
\begin{equation}
<\dot{P}_{0}^{t}|_{t=0}f, g>\equiv 0.
\end{equation}
\end{prop}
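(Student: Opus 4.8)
The plan is to substitute the first variation formula (\ref{fva}) and pair it with $g$:
\[
4\langle \dot{P}_{0}^{t}|_{t=0}f,\,g\rangle
= -2\langle D\overline{\Box}_{b}f,\,g\rangle
  -2\langle \Box_{b}Df,\,g\rangle
  +4\langle (EZ_{1}Z_{1}+E_{1}Z_{1})f,\,g\rangle ,
\]
and to show that each of the three terms on the right vanishes. The one ingredient doing the real work is the integration-by-parts identity: for \emph{every} $h\in C^{\infty}(S^{3})$, if $g_{\bar 1}=0$ then $\langle Z_{1}h,\,g\rangle=0$; in other words, the range of $Z_{1}$ acting on functions is orthogonal to the CR-holomorphic functions. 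I would obtain this exactly as the divergence formula was obtained: applying Stokes' theorem to the $2$-form $\theta\wedge(h\bar{g}\,\theta^{\bar 1})$ gives $\int_{S^{3}}\bar{\delta}_{b}(h\bar{g}\,\theta^{\bar 1})\,dV=0$, and on the standard $S^{3}$ (where $\theta_{\bar 1}{}^{\bar 1}(Z_{1})=0$ and $h^{1\bar 1}=1$) one has $\bar{\delta}_{b}(h\bar{g}\,\theta^{\bar 1})=Z_{1}(h\bar{g})=(Z_{1}h)\bar{g}+h\,\overline{Z_{\bar 1}g}$; integrating and using $g_{\bar 1}=0$ yields the claim.

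Granting this, the three terms are treated as follows. For the middle term, $\Box_{b}=2\bar{\partial}_{b}^{*}\bar{\partial}_{b}$ is formally self-adjoint, so $\langle \Box_{b}Df,g\rangle=\langle Df,\Box_{b}g\rangle=0$ because $\Box_{b}g=2\bar{\partial}_{b}^{*}\bar{\partial}_{b}g=0$ when $g_{\bar 1}=0$; this step uses nothing about $f$. For the last term, the Leibniz rule gives $EZ_{1}Z_{1}f+E_{1}Z_{1}f=Z_{1}(E\,Z_{1}f)$, so it equals $4\langle Z_{1}(EZ_{1}f),g\rangle=0$ by the identity above, again regardless of $f$. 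Only the first term uses the hypothesis on $f$: by the eigenvalue relations (\ref{eigeq}) one has $\overline{\Box}_{b}f=2pf$ if $f\in H_{p,0}$ and $\overline{\Box}_{b}f=0$ if $f\in H_{0,p}$, so in either case $\overline{\Box}_{b}f$ is again CR-holomorphic, i.e. $(\overline{\Box}_{b}f)_{\bar 1}=0$. Now for any $\psi$ with $\psi_{\bar 1}=0$, the operator $D=\phi Z_{1}Z_{1}+\bar{\phi}Z_{\bar 1}Z_{\bar 1}+\phi_{1}Z_{1}+\bar{\phi}_{\bar 1}Z_{\bar 1}$ collapses to $D\psi=\phi Z_{1}Z_{1}\psi+\phi_{1}Z_{1}\psi=Z_{1}(\phi\,Z_{1}\psi)$, so $\langle D\overline{\Box}_{b}f,g\rangle=\langle Z_{1}(\phi\,Z_{1}\overline{\Box}_{b}f),g\rangle=0$ once more. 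Summing the three contributions gives $\langle \dot{P}_{0}^{t}|_{t=0}f,g\rangle\equiv 0$.

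I do not anticipate a genuine obstacle here: the only points needing care are the connection bookkeeping in the integration-by-parts step (invoking $\theta_{1}{}^{1}(Z_{\bar 1})=0$ for the standard $S^{3}$, which is recorded earlier in this section) and the elementary observation that $\overline{\Box}_{b}$ sends $H_{p,0}\oplus H_{0,p}$ into the CR-holomorphic functions. Combined with the formal self-adjointness and reality of $P_{0}^{t}$ and with $\overline{H_{0,k}}=H_{k,0}$, the proposition then covers every pairing $\langle \dot{P}_{0}^{t}|_{t=0}u,v\rangle$ with $u,v$ each lying in some $H_{k,0}$ or $H_{0,k}$ (after, if necessary, moving $\dot{P}_{0}^{t}|_{t=0}$ onto the CR-holomorphic factor or passing to conjugates), which is what is needed to deduce (\ref{fvofqu}), namely $\dot{I}_{t}(f)|_{t=0}\equiv 0$ on ${\bf H}$.
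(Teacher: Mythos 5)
Your proposal is correct and follows essentially the same route as the paper: substitute the first variation formula, kill the $\Box_{b}D$ term by self-adjointness and $\Box_{b}g=0$, and kill the remaining terms by integration by parts against the CR-holomorphic $g$ (your lemma that the range of $Z_{1}$ is orthogonal to functions with $g_{\bar 1}=0$ is exactly the paper's repeated integration by parts, and your observation that $\overline{\Box}_{b}f$ is a multiple of $f$, hence again CR-holomorphic, matches the paper's use of the eigenvalue relations). The only difference is organizational: you isolate the single orthogonality lemma up front, which makes the three cancellations uniform.
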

\begin{proof}
We only display the proof for $f\in H_{p,0}$. The proof for $f\in H_{0,p}$ is similar. We repeatedly integrate by parts and use $g_{\bar 1}=0$ to finish the proof. From the first variation (\ref{fva}), we have
\begin{equation}
\begin{split}
4<\dot{P}_{0}^{t}|_{t=0}f, g>&=-2<D\overline{\Box}_{b}f, g>-2<Df, \Box_{b}g>\\
&+4<Ef_{11},g>+4<E_{1}f_{1}, g>.
\end{split}
\end{equation}
Integration by parts in the fourth term, and using $g_{\bar 1}=0$ in the integration by parts and in the second term yields
\begin{equation}
\begin{split}
4<\dot{P}_{0}^{t}|_{t=0}f, f>&=-2<D\overline{\Box}_{b}f, g>,\\
&=-4p(q+1)<Df, g>=-4p(q+1)<\phi f_{11}+\phi_{1}f_{1},g>\\
&=0,
\end{split}
\end{equation}
where the last equality is due to the integration by parts and using $g_{\bar 1}=0$.
\end{proof}

\begin{prop}\label{apfvf1}
Let $\phi\in C^{\infty}(S^3)$. Let $f\in {\bf H}=C^{\infty}(S^3)\cap
\oplus_{p\geq 1} H_{p,0}\oplus H_{0,p}$. Then
$$<\dot{P}_{0}^{t}|_{t=0}f,f>\equiv 0.$$
\end{prop}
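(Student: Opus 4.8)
\emph{Plan.} The plan is to reduce the statement to Proposition~\ref{apfvf} by decomposing $f$ into bigraded pieces and pairing them off. Write $f=\sum_{k\geq 1}f^{k}+\sum_{k\geq 1}g^{k}$ with $f^{k}\in H_{k,0}$ and $g^{k}\in H_{0,k}$, the series converging in $C^{\infty}(S^{3})$; since $\dot P_{0}:=\dot P_{0}^{t}|_{t=0}$ is a differential operator, the corresponding expansion of $\dot P_{0}f$ converges in $C^{\infty}$, hence in $L^{2}$. Thus, writing $f_{N}$ for the $N$-th partial sum, $\langle\dot P_{0}f,f\rangle=\lim_{N}\langle\dot P_{0}f_{N},f_{N}\rangle$, and by sesquilinearity each $\langle\dot P_{0}f_{N},f_{N}\rangle$ is a finite sum of terms of the form $\langle\dot P_{0}u,v\rangle$ with $u,v$ each one of the pieces $f^{k},g^{k}$. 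So it suffices to prove $\langle\dot P_{0}u,v\rangle=0$ whenever $u$ lies in $H_{k,0}$ or $H_{0,k}$ and $v$ lies in $H_{l,0}$ or $H_{0,l}$, for some $k,l\geq 1$.

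I would invoke two elementary facts. First, from $H_{p,0}\subset P_{p,0}=\operatorname{span}\{z_{1}^{a}z_{2}^{b}:a+b=p\}$, $H_{0,p}\subset P_{0,p}$, together with $Z_{1}=\bar z_{2}\partial_{z_{1}}-\bar z_{1}\partial_{z_{2}}$ and $Z_{\bar 1}=\overline{Z_{1}}$: every element of $H_{p,0}$ is CR holomorphic (so $(\cdot)_{\bar 1}=Z_{\bar 1}(\cdot)=0$ on it), every element of $H_{0,p}$ is CR anti-holomorphic, and $\overline{H_{p,0}}=H_{0,p}$. Second, each Paneitz operator $P_{0}^{t}$ attached to the fixed contact form $\theta$ is real, hence so is $\dot P_{0}$; since the volume form $dV=\theta\wedge d\theta$ is real this yields $\overline{\langle\dot P_{0}u,v\rangle}=\langle\dot P_{0}\bar u,\bar v\rangle$ for all smooth $u,v$.

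Now two cases. If $v\in H_{l,0}$ then $v_{\bar 1}=0$, and since $u$ lies in $H_{k,0}$ or $H_{0,k}$, Proposition~\ref{apfvf} applied with ``$f$''$=u$ and ``$g$''$=v$ gives $\langle\dot P_{0}u,v\rangle=0$. If instead $v\in H_{0,l}$ then $\bar v\in H_{l,0}$ is CR holomorphic while $\bar u$ again lies in $H_{k,0}$ or $H_{0,k}$; by the reality of $\dot P_{0}$, $\langle\dot P_{0}u,v\rangle=\overline{\langle\dot P_{0}\bar u,\bar v\rangle}$, and the right-hand side vanishes by Proposition~\ref{apfvf} since now the second argument $\bar v\in H_{l,0}$ satisfies $(\bar v)_{\bar 1}=0$. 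This exhausts all cases, so $\langle\dot P_{0}f,f\rangle=0$.

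The only subtlety --- the ``main obstacle'', modest as it is --- is that Proposition~\ref{apfvf} only controls pairings $\langle\dot P_{0}u,v\rangle$ in which $v$ is CR holomorphic, whereas $f\in{\bf H}$ also carries CR anti-holomorphic pieces $g^{l}$; the conjugation step in the second case is exactly what returns those to the reach of Proposition~\ref{apfvf}. If one prefers to avoid conjugation, the genuinely new terms $\langle\dot P_{0}g^{k},g^{l}\rangle$ can be done by hand: since $Z_{1}g^{k}=0$ and $\overline\Box_{b}g^{k}=0$, the first variation formula \eqref{fva} collapses to $4\dot P_{0}g^{k}=-2\Box_{b}Dg^{k}$ with $Dg^{k}=Z_{\bar 1}(\bar\phi\,Z_{\bar 1}g^{k})$, so that $4\langle\dot P_{0}g^{k},g^{l}\rangle=-2\langle Dg^{k},\Box_{b}g^{l}\rangle=-4l\langle Dg^{k},g^{l}\rangle=4l\langle\bar\phi\,Z_{\bar 1}g^{k},Z_{1}g^{l}\rangle=0$, the final steps using \eqref{eigeq}, the torsion-free integration by parts on the standard $S^{3}$, and $Z_{1}g^{l}=0$.
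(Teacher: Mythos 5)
Your argument is correct and is exactly the paper's proof, spelled out in detail: the paper likewise deduces the result from Proposition~\ref{apfvf} together with the reality of $\dot{P}_{0}^{t}|_{t=0}$, using conjugation to reduce the pairings whose second slot is anti-CR-holomorphic to the case $g_{\bar 1}=0$ covered by that proposition. Your extra remarks (the $C^{\infty}$ convergence of the bigraded expansion and the direct computation of $\langle\dot P_{0}g^{k},g^{l}\rangle$) are sound but not needed beyond what the paper's one-line proof already invokes.
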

\begin{proof}
The Proposition follows from Proposition \ref{apfvf} and the fact that the operator $\dot{P}_{0}^{t}|_{t=0}$ is real.
\end{proof}

\begin{prop}\label{dsp}
Let $\phi\in C^{\infty}(S^3)$. Then
\begin{equation}
<D^{2}f,f>\ \geq 0,\ \ \textrm{for all}\ f\in KerP_{0}
\end{equation}
\end{prop}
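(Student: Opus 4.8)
The plan is to show that, on $S^{3}$ with its standard CR structure, the operator $D$ is \emph{formally self-adjoint} on $L^{2}(S^{3},dV)$, $dV=\theta\wedge d\theta$. Granting $D^{*}=D$, for every smooth $f$ one has
\[
\langle D^{2}f,f\rangle=\langle Df,D^{*}f\rangle=\langle Df,Df\rangle=\|Df\|^{2}\ge 0,
\]
so in particular the inequality holds for $f\in\ker P_{0}$. The hypothesis $f\in\ker P_{0}$ plays no role in this argument; it is simply the case that is needed, since $8\langle D^{2}f,f\rangle$ is the ``stable part'' of $4\ddot P_{0}^{t}|_{t=0}$ in \eqref{secvar} and there $f\in{\bf H}\subset\ker P_{0}$.

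The structural input I would set up first is the pair of adjoint identities $(Z_{1})^{*}=-Z_{\bar 1}$ and $(Z_{\bar 1})^{*}=-Z_{1}$ on $S^{3}$, together with the obvious $(a\,\cdot)^{*}=\bar a\,\cdot$ for multiplication by a function $a$. For the first identity, Stokes' theorem applied to $k\,\theta\wedge\theta^{\bar 1}$ (equivalently, the divergence formula $\int_{S^{3}}\delta_{b}\sigma\,dV=0$) gives $\int_{S^{3}}(Z_{1}k)\,dV=0$ for every smooth $k$; here one uses that in the standard coframe $\theta_{1}{}^{1}=-2i\theta$, so $\theta_{1}{}^{1}(Z_{1})=\theta_{1}{}^{1}(Z_{\bar 1})=0$ and $h_{1\bar 1}=1$. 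Conjugating gives the analogous statement for $Z_{\bar 1}$. Integrating by parts in $\langle Z_{1}\varphi,\psi\rangle=\int(Z_{1}\varphi)\bar\psi\,dV$ then yields $(Z_{1})^{*}=-Z_{\bar 1}$, and symmetrically for $Z_{\bar 1}$.

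Next I would compute $D^{*}$ term by term from $D=\phi Z_{1}Z_{1}+\bar\phi Z_{\bar 1}Z_{\bar 1}+\phi_{1}Z_{1}+\bar\phi_{\bar 1}Z_{\bar 1}$, using $(AB)^{*}=B^{*}A^{*}$ and commuting the coefficient functions past $Z_{1},Z_{\bar 1}$ (recall $Z_{\bar 1}\bar\phi=\overline{\phi_{1}}=\bar\phi_{\bar 1}$ and $Z_{\bar 1}Z_{\bar 1}\bar\phi=\overline{\phi_{11}}$, since on $S^{3}$ covariant and coordinate derivatives of scalars agree). One obtains $(\phi Z_{1}Z_{1})^{*}=\bar\phi Z_{\bar 1}Z_{\bar 1}+2\overline{\phi_{1}}\,Z_{\bar 1}+\overline{\phi_{11}}$, $(\bar\phi Z_{\bar 1}Z_{\bar 1})^{*}=\phi Z_{1}Z_{1}+2\phi_{1}Z_{1}+\phi_{11}$, $(\phi_{1}Z_{1})^{*}=-\overline{\phi_{1}}\,Z_{\bar 1}-\overline{\phi_{11}}$, and $(\bar\phi_{\bar 1}Z_{\bar 1})^{*}=-\phi_{1}Z_{1}-\phi_{11}$. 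Adding these, the second-order part reassembles to $\phi Z_{1}Z_{1}+\bar\phi Z_{\bar 1}Z_{\bar 1}$; the first-order terms collapse, $2\phi_{1}-\phi_{1}=\phi_{1}$ and $2\overline{\phi_{1}}-\overline{\phi_{1}}=\bar\phi_{\bar 1}$; and the zeroth-order terms cancel identically ($\overline{\phi_{11}}$ against $-\overline{\phi_{11}}$, and $\phi_{11}$ against $-\phi_{11}$). Hence $D^{*}=D$, which completes the proof.

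I do not anticipate a genuine obstacle: the only care required is the bookkeeping of the lower-order terms in the adjoint computation, together with the verification that the clean formula $(Z_{1})^{*}=-Z_{\bar 1}$ really holds for the chosen coframe on $S^{3}$ (this hinges on $\theta_{1}{}^{1}$ being proportional to $\theta$). Should one prefer to avoid the self-adjointness observation, the alternative is to use $\ker P_{0}=\overline{\bigoplus_{p\ge 0}H_{p,0}}\oplus\overline{\bigoplus_{q\ge 1}H_{0,q}}$ and write $f=u+w$ with $Z_{\bar 1}u=0$, $Z_{1}w=0$, so that $Df=Z_{1}(\phi Z_{1}u)+Z_{\bar 1}(\bar\phi Z_{\bar 1}w)$, and then integrate $\langle D^{2}f,f\rangle$ by parts directly; this route is longer, and is essentially the computation that the self-adjointness remark shortcuts.
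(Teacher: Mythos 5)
Your proof is correct, and it takes a genuinely different route from the paper. The paper decomposes $f\in\ker P_{0}$ as $f=u+v$ with $u$ CR holomorphic and $v$ anti-CR holomorphic, expands $\langle D^{2}f,f\rangle$ into the four cross terms $\langle D^{2}u,u\rangle$, $\langle D^{2}v,v\rangle$, $\langle D^{2}u,v\rangle$, $\langle D^{2}v,u\rangle$, and integrates each by parts using $u_{\bar 1}=0$, $v_{1}=0$ until the sum is recognized as $\int_{S^{3}}|(\phi u_{11}+\phi_{1}u_{1})+(\bar\phi v_{\bar 1\bar 1}+\bar\phi_{\bar 1}v_{\bar 1})|^{2}$ --- which is exactly $\|Df\|^{2}$ restricted to such $f$, so the two arguments land on the same quantity. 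Your observation that $D^{*}=D$ (which I checked: the adjoint identities $(Z_{1})^{*}=-Z_{\bar 1}$, $(Z_{\bar 1})^{*}=-Z_{1}$ do hold on the standard $S^{3}$ precisely because $\theta_{1}{}^{1}=-2i\theta$ kills the connection terms in $\delta_{b}$, and your term-by-term bookkeeping of the first- and zeroth-order parts is right) packages all of that integration by parts into one structural fact and yields the stronger conclusion $\langle D^{2}f,f\rangle=\|Df\|^{2}\geq 0$ for \emph{all} smooth $f$, with the hypothesis $f\in\ker P_{0}$ playing no role. What the paper's route buys in exchange is the explicit closed form of $Df$ on $\ker P_{0}$; what yours buys is brevity, generality, and a reusable piece of structure (formal self-adjointness of the deformation operator $D$). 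Both are valid; yours is arguably the cleaner argument.
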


\begin{proof}
Since $f\in KerP_{0}$, its Fourier representation has the form
\[f=\sum_{p,q=0}^{\infty}f_{pq},\ \ \textrm{with}\ p=0\ \textrm{or}\ q=0.\]
Thus we divide it into a CR holomorphic part and a anti-CR holomorphic part, that is, $f$ has an expression $f=u+v$, where $u$ and $v$ is a CR function and
anti-CR function, respectively. This means $u_{\bar 1}=0$ and $v_{1}$=0.
Now we compute
\begin{equation}\label{d2ff}
<D^{2}f,f>=<D^{2}u,u>+<D^{2}v,v>+<D^{2}u,v>+<D^{2}v,u>.
\end{equation}

Since $u_{\bar 1}=0$, we get $Du=\phi u_{11}+\phi_{1}u_{1}$. Thus,
\begin{equation}\label{1}
\begin{split}
<D^{2}u, u>&=<\phi(\phi u_{11}+\phi_{1}u_{1})_{11},u>+<\bar{\phi}(\phi u_{11}+\phi_{1}u_{1})_{\bar{1}\bar 1},u>\\
&+<\phi_{1}(\phi u_{11}+\phi_{1}u_{1})_{1},u>+<\bar{\phi}_{\bar 1}(\phi u_{11}+\phi_{1}u_{1})_{\bar 1},u>.
\end{split}
\end{equation}
Integrate by parts the last two terms to get
\begin{equation}\label{2}
\begin{split}
&<\phi_{1}(\phi u_{11}+\phi_{1}u_{1})_{1}, u>+<\bar{\phi}_{\bar 1}(\phi u_{11}+\phi_{1}u_{1})_{\bar 1}, u>\\
=-&<\phi(\phi u_{11}+\phi_{1}u_{1})_{11},u>-<\phi(\phi u_{11}+\phi_{1}u_{1})_{1}, u_{\bar 1}>\\
-&<\bar{\phi}(\phi u_{11}+\phi_{1}u_{1})_{\bar{1}\bar 1}, u>-<\bar{\phi}(\phi u_{11}+\phi_{1}u_{1})_{\bar 1}, u_{1}>.
\end{split}
\end{equation}
Taking together (\ref{1}) and (\ref{2}), and using $u_{\bar 1}=0$ and integrating by parts again, we see
\begin{equation}\label{d2uu}
\begin{split}
<D^{2}u, u>&=-\int_{S^3}\bar{\phi}(\phi u_{11}+\phi_{1}u_{1})_{\bar 1}\bar{u}_{\bar 1}\\
&=\int_{S^3}|\phi|^{2}|u_{11}|^{2}+|\phi_{1}|^{2}|u_{1}|^{2}+\int_{S^3}(\bar{\phi}\phi_{1}u_{1}\bar{u}_{\bar{1}\bar 1}+\bar{\phi}_{\bar 1}\phi u_{11}\bar{u}_{\bar 1})\\
&=\int_{S^3}|\phi u_{11}+\phi_{1}u_{1}|^{2}\ \geq 0.
\end{split}
\end{equation}
Similarly, using $v_{1}=0$, we get
\begin{equation}\label{d2uv}
<D^{2}u, v>=\int_{S^3}(\phi u_{11}+\phi_{1}u_{1})(\phi \bar{v}_{11}+\phi_{1}\bar{v}_{1}).
\end{equation}
Using the conjugate and the fact $D^{2}$ is real, we see
\begin{equation}\label{d2vv}
<D^{2}v,v>=\overline{<D^{2}\bar{v},\bar{v}>}=\int_{S^3}|\phi \bar{v}_{11}+\phi_{1}\bar{v}_{1}|^{2},
\end{equation}
and
\begin{equation}\label{d2vu}
<D^{2}v, u>=\int_{S^3}\overline{(\phi u_{11}+\phi_{1}u_{1})}\overline{(\phi \bar{v}_{11}+\phi_{1}\bar{v}_{1})}.
\end{equation}
Substituting (\ref{d2uu}), (\ref{d2uv}), (\ref{d2vv}) and (\ref{d2vu}) into (\ref{d2ff}), we get
\[<D^{2}f,f>=\int_{S^3}|(\phi u_{11}+\phi_{1}u_{1})+(\bar{\phi}v_{\bar{1}\bar1}+\bar{\phi}_{\bar1}{v}_{\bar 1})|^{2}\geq 0.\]
This finishes the proof.
\end{proof}

Using (\ref{secvar}) write

\begin{equation}\label{deofse}
4<\ddot{P}_{0}^{t}|_{t=0}f,f>=8<D^{2}f,f>+<Rf,f>.
\end{equation}

\begin{prop}\label{apsvf}
Let $\phi\in C^{\infty}(S^3)$. Let $f\in H_{p,0}$ or $f\in H_{0,p},$ and $g_{\bar 1}=0$. Then for all $p\geq 0,$ we have
\begin{equation*}
(a) <Rf,g>=0,\ \ \textrm{if}\ f\in H_{0,p},
\end{equation*}
and
\begin{equation*}
(b) <Rf,g>=8\int_{S^3}(p|\phi|^{2}-E\bar{\phi})f_{1}\bar{g}_{\bar 1}\ \ \textrm{if} \ f\in H_{p,0}.
\end{equation*}
\end{prop}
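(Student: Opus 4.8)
The plan is to extract the operator $R$ from (\ref{secvar}) and (\ref{deofse}): since $4\ddot{P}_{0}^{t}|_{t=0}=8D^{2}+R$ as operators,
\begin{equation*}
\begin{split}
R&=16|\phi|^{2}P_{0}+2|\phi|^{2}(\Box_{b}\Box_{b}+\overline{\Box}_{b}\overline{\Box}_{b})-8E\bar{\phi}\Delta_{b}+8\nabla_{b}(E\bar{\phi})\\
&+4(\Box_{b}|\phi|^{2})\Delta_{b}-8(\nabla_{b}|\phi|^{2})\Delta_{b}-4(\nabla_{b}|\phi|^{2})\overline{\Box}_{b}-4\Box_{b}(\nabla_{b}|\phi|^{2}),
\end{split}
\end{equation*}
and then to compute $<Rf,g>$ term by term. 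The inputs are elementary: (i) the eigenvalue list (\ref{eigeq}) together with $\Box_{b}u=-2u_{\bar{1}1}$, $\overline{\Box}_{b}u=-2u_{1\bar{1}}$ (see (\ref{va6})), so that $f\in H_{p,0}$ satisfies $f_{\bar{1}}=0$, $\Box_{b}f=0$, $\overline{\Box}_{b}f=2pf$, $f_{1\bar{1}}=-pf$, $\Delta_{b}f=pf$, $P_{0}f=0$, while $f\in H_{0,p}$ satisfies $f_{1}=0$, $\overline{\Box}_{b}f=0$, $\Box_{b}f=2pf$, $f_{\bar{1}1}=-pf$, $\Delta_{b}f=pf$, $P_{0}f=0$; (ii) the hypothesis $g_{\bar{1}}=0$, which also gives $\bar{g}_{1}=0$ and $\Box_{b}g=0$; (iii) on the standard $S^{3}$ the connection form kills $Z_{1}$ and $Z_{\bar{1}}$, so $\int_{S^{3}}(Z_{1}w)\,dV=\int_{S^{3}}(Z_{\bar{1}}w)\,dV=0$ for all $w$ (from the divergence formula; equivalently $Z_{1}^{*}=-Z_{\bar{1}}$ on $L^{2}(dV)$), and $\Box_{b},\overline{\Box}_{b},\Delta_{b},P_{0}$ are self-adjoint; (iv) the Leibniz rule $\overline{\Box}_{b}(uw)=(\overline{\Box}_{b}u)w+u\overline{\Box}_{b}w-2u_{1}w_{\bar{1}}-2u_{\bar{1}}w_{1}$ and its conjugate.

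First I would dispose of the terms that vanish outright: $16|\phi|^{2}P_{0}f=0$ since $P_{0}f=0$, and $<-4\Box_{b}(\nabla_{b}|\phi|^{2})f,g>=-4<(\nabla_{b}|\phi|^{2})f,\Box_{b}g>=0$ since $\Box_{b}g=0$. For case (a), $f\in H_{0,p}$, the terms containing $\overline{\Box}_{b}f$ also vanish, and $<4(\Box_{b}|\phi|^{2})\Delta_{b}f,g>=4p\int_{S^{3}}|\phi|^{2}\overline{\Box}_{b}(f\bar{g})\,dV=0$ because every summand of the Leibniz expansion of $\overline{\Box}_{b}(f\bar{g})$ carries one of $\overline{\Box}_{b}f$, $\overline{\Box}_{b}\bar{g}$, $f_{1}$, $\bar{g}_{1}$. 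Of the two pairs left, one integration by parts in $Z_{1}$ (using $f_{\bar{1}1}=-pf$ and $\bar{g}_{1}=0$) turns $8\nabla_{b}(E\bar{\phi})f=8(E\bar{\phi})_{1}f_{\bar{1}}$ into $+8p\int_{S^{3}}E\bar{\phi}f\bar{g}$ and $-8(\nabla_{b}|\phi|^{2})\Delta_{b}f=-8p(|\phi|^{2})_{1}f_{\bar{1}}$ into $-8p^{2}\int_{S^{3}}|\phi|^{2}f\bar{g}$, which exactly cancel the contributions $-8p\int_{S^{3}}E\bar{\phi}f\bar{g}$ of $-8E\bar{\phi}\Delta_{b}f$ and $+8p^{2}\int_{S^{3}}|\phi|^{2}f\bar{g}$ of $2|\phi|^{2}\Box_{b}\Box_{b}f$. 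Hence $<Rf,g>=0$, which is (a).

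For case (b), $f\in H_{p,0}$, the $E$-pair behaves analogously but now leaves a genuine first-order remainder: $-8E\bar{\phi}\Delta_{b}f$ gives $-8p\int_{S^{3}}E\bar{\phi}f\bar{g}$, while $8\nabla_{b}(E\bar{\phi})f=8(E\bar{\phi})_{\bar{1}}f_{1}$, integrated by parts in $Z_{\bar{1}}$ using $f_{1\bar{1}}=-pf$, gives $8p\int_{S^{3}}E\bar{\phi}f\bar{g}-8\int_{S^{3}}E\bar{\phi}f_{1}\bar{g}_{\bar{1}}$, for a total of $-8\int_{S^{3}}E\bar{\phi}f_{1}\bar{g}_{\bar{1}}$. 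For the remaining ($|\phi|^{2}$-)terms: $2|\phi|^{2}\overline{\Box}_{b}\overline{\Box}_{b}f$ contributes $8p^{2}\int_{S^{3}}|\phi|^{2}f\bar{g}$; $4(\Box_{b}|\phi|^{2})\Delta_{b}f$, via self-adjointness of $\Box_{b}$ and the Leibniz rule (which now leaves $\overline{\Box}_{b}(f\bar{g})=2pf\bar{g}-2f_{1}\bar{g}_{\bar{1}}$), contributes $8p^{2}\int_{S^{3}}|\phi|^{2}f\bar{g}-8p\int_{S^{3}}|\phi|^{2}f_{1}\bar{g}_{\bar{1}}$; and since $-8(\nabla_{b}|\phi|^{2})\Delta_{b}f$ and $-4(\nabla_{b}|\phi|^{2})\overline{\Box}_{b}f$ coincide (both equal $-8p(|\phi|^{2})_{\bar{1}}f_{1}$ because $f_{\bar{1}}=0$), integrating each by parts in $Z_{\bar{1}}$ yields $-8p^{2}\int_{S^{3}}|\phi|^{2}f\bar{g}+8p\int_{S^{3}}|\phi|^{2}f_{1}\bar{g}_{\bar{1}}$, twice. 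Adding up, the $\int_{S^{3}}|\phi|^{2}f\bar{g}$ contributions sum to $(8+8-16)p^{2}=0$, leaving $8p\int_{S^{3}}|\phi|^{2}f_{1}\bar{g}_{\bar{1}}$; combined with the $E$-part this gives $<Rf,g>=8\int_{S^{3}}(p|\phi|^{2}-E\bar{\phi})f_{1}\bar{g}_{\bar{1}}$, which is (b) (the case $p=0$ being immediate, $f$ being then constant, so both sides vanish).

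The computation needs no idea beyond (\ref{secvar}), (\ref{eigeq}) and integration by parts; the only delicate point is the bookkeeping — keeping track of all summands of $R$ with the correct signs through the repeated integrations by parts, and, in case (b), verifying that the several $\int_{S^{3}}|\phi|^{2}f\bar{g}$ terms cancel precisely so that the answer collapses to the stated first-order expression. This is clean only because on the standard $S^{3}$ the connection form annihilates $Z_{1}$ and $Z_{\bar{1}}$, so the Leibniz rules and integration-by-parts identities above hold without connection corrections.
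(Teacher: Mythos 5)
Your proposal is correct and follows essentially the same route as the paper: identify $R$ as $4\ddot{P}_{0}^{t}|_{t=0}-8D^{2}$, evaluate each summand on $f$ using the eigenvalue identities (\ref{eigeq}) together with $f_{\bar 1}=0$ (resp.\ $f_{1}=0$), $g_{\bar 1}=0$, and integrate by parts with $Z_{1}^{*}=-Z_{\bar 1}$; the bookkeeping and the resulting cancellations match the paper's term-by-term computation (your handling of the $(\Box_{b}|\phi|^{2})\Delta_{b}$ term via $\int(\Box_{b}|\phi|^{2})f\bar{g}=\int|\phi|^{2}\overline{\Box}_{b}(f\bar{g})$ is just the adjoint form of the paper's Leibniz-rule rewriting and yields the same contributions).
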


\begin{proof}

We first compute each term in the formula of the second variation of the Paneitz operator (see (\ref{sva})). For all $f\in H_{p,0}$,
and $g_{\bar 1}=0$.

By (\ref{eigeq}),

\begin{equation}\label{3}
<16|\phi|^{2}P_{0}f,g>=0.
\end{equation}

\begin{equation}\label{4}
<2|\phi|^{2}(\Box_{b}\Box_{b}+\overline{\Box}_{b}\overline{\Box}_{b})f,g>=8p^{2}\int_{S^3}|\phi|^{2}f\bar{g}.
\end{equation}

\begin{equation}\label{5}
<-8E\bar{\phi}\Delta_{b}f,g>=-8p\int_{S^3}E\bar{\phi}f\bar{g}.
\end{equation}
Integrating by parts gives
\begin{equation}\label{6}
\begin{split}
<8\nabla_{b}(E\bar{\phi})f,g>&=8<(E\bar{\phi})_{\bar 1}f_{1}+(E\bar{\phi})_{1}f_{\bar 1},g>=8<(E\bar{\phi})_{\bar 1}f_{1},g>\\
&=-8<E\bar{\phi}f_{1\bar 1},g>-8<E\bar{\phi}f_{1},g_{1}>\\
&=8p\int_{S^3}E\bar{\phi}f\bar{g}-8\int_{S^3}E\bar{\phi}f_{1}\bar{g}_{\bar 1}.
\end{split}
\end{equation}

\begin{equation}\label{7}
\begin{split}
<4(\Box_{b}|\phi|^{2})\Delta_{b}f,g>&=4p<(\Box_{b}|\phi|^{2})f,g>\\
&=4p<\Box_{b}(|\phi|^{2}f)-|\phi|^{2}\Box_{b}f+2(|\phi|^{2}_{\bar 1}f_{1}+|\phi|^{2}_{1}f_{\bar 1}),g>\\
&=8p<(\nabla_{b}|\phi|^{2})f,g>\\
&=<8(\nabla_{b}|\phi|^{2})\Delta_{b}f,g>.
\end{split}
\end{equation}

\begin{equation}\label{8}
\begin{split}
<-4(\nabla_{b}|\phi|^{2})\overline{\Box}_{b}f,g>&=-8p<(\nabla_{b}|\phi|^{2})f,g>=-8p<|\phi|^{2}_{\bar 1}f_{1},g>\\
&=8p<|\phi|^{2}f_{1\bar 1},g>+8p<|\phi|^{2}f_{1},g_{1}>\\
&=8p\int_{S^3}|\phi|^{2}f_{1}\bar{g}_{\bar 1}-8p^{2}\int_{S^3}|\phi|^{2}f\bar{g}.
\end{split}
\end{equation}

\begin{equation}\label{9}
<-4\Box_{b}(\nabla_{b}|\phi|^{2})f,g>=<-4(\nabla_{b}|\phi|^{2})f,\Box_{b}g>=0\\
\end{equation}
We collect similar terms from (\ref{3}) to (\ref{9}) and observe that both the coefficients of terms
\[\int_{S^3}|\phi|^{2}f\bar{g}\ \ \textrm{and}\ \ \int_{S^3}E\bar{\phi}f\bar{g}\]
are zero. The coefficient of the term $\int_{S^3}|\phi|^{2}f_{1}\bar{g}_{\bar 1}$ is $8p$
and The coefficient of the term $\int_{S^3}E\bar{\phi}f_{1}\bar{g}$ is $-8$. This completes the proof of $(b)$.

Similarly, for $f\in H_{0,p}$, integrating by parts and using $f_{1}=0$ and $g_{\bar 1}=0$, we get
\begin{equation}
\begin{split}
<16|\phi|^{2}P_{0}f,g>&=0;\\
<2|\phi|^{2}(\Box_{b}\Box_{b}+\overline{\Box}_{b}\overline{\Box}_{b})f,g>&=8p^{2}\int_{S^3}|\phi|^{2}f\bar{g};\\
<-8E\bar{\phi}\Delta_{b}f,g>&=-8p\int_{S^3}E\bar{\phi}f\bar{g};\\
<8\nabla_{b}(E\bar{\phi})f,g>&=8p\int_{S^3}E\bar{\phi}f\bar{g};\\
<4(\Box_{b}|\phi|^{2})\Delta_{b}f,g>&=<8(\nabla_{b}|\phi|^{2})\Delta_{b}f,g>-8p^{2}\int_{S^3}|\phi|^{2}f\bar{g};\\
<-4(\nabla_{b}|\phi|^{2})\overline{\Box}_{b}f,g>&=0;\\
<-4\Box_{b}(\nabla_{b}|\phi|^{2})f,g>&=<-4(\nabla_{b}|\phi|^{2})f,\Box_{b}g>=0,
\end{split}
\end{equation}
Taking all together these terms, we get $<Rf,g>=0$. This completes the proof of $(a)$.
\end{proof}

\begin{proof}[{\bf The proof of Proposition \ref{main5}:}]
From (\ref{deofse}),
\[<\ddot{P}_{0}^{t}f,f>=2<D^{2}f,f>+\frac{1}{4}<Rf,f>.\]
Using Proposition \ref{dsp},
\[<\ddot{P}_{0}^{t}f,f>\geq \frac{1}{4}<Rf,f>.\]
Now,
\[f=\sum_{k\geq 1}f^{k}+\sum_{k\geq 1}g^{k},\ \ \ f^{k}\in H_{k,0},\ \ g^{k}\in H_{0,k}.\]
Using the relations in Proposition \ref{apsvf}, one computes
\[\frac{1}{4}<Rf,f>=2\sum_{k,l}\int_{S^3}(k|\phi|^{2}-E\bar{\phi})f^{k}_{1}\overline{f^{l}_{1}}+2\sum_{k,l}\int_{S^3}(k|\phi|^{2}-E\bar{\phi})g^{k}_{\bar 1}\overline{g^{l}_{\bar 1}}.\]
This ends the proof.
\end{proof}

 We are going to recall the Hopf fibration of $S^3\subset C^{2}$.
We consider the space $BC^{2}$ by blowing up the orign from $C^{2}$. Let $(z_{1},z_{2})$ be the linear coordinates on $C^2$ and $(\zeta, w)$ denote
blow up coordinates on $BC^2$. These coordinates are related by
\[\zeta=z_{1},\ \ \ w=\frac{z_{2}}{z_{1}}.\]
 $BC^2$ is the tautological line bundle over $CP^1$. We see that $w$ is an affine coordinate on $CP^1=S^2$, which is the blow up of the origin, and
$\zeta$ is the fiber coordinate. Now $S^3$ is defined by $r(z_{1},z_{2})=|z_{1}|^{2}+|z_{2}|^{2}=1$, so we have
\begin{equation}
\begin{split}
1=&r=|z_{1}|^{2}+|z_{2}|^{2}\\
&=|\zeta|^{2}(1+|w|^{2})\\
&=|\zeta|^{2}e^{H(w)},
\end{split}
\end{equation}
where $H(w)=\ln{(1+|w|^{2})}$, defines a circle bundle over $CP^{1}=S^2$. This is a fibration of $S^3$. Let $\theta$ be the standard contact form on $S^3$.
Then we have

\begin{equation}\label{jac}
\theta\wedge d\theta=i\frac{\partial^{2}H(w)}{\partial w\partial \bar{w} }d\psi\wedge dw\wedge d\bar{w},
\end{equation}
where $\zeta=|\zeta|e^{i\psi}$ and $\psi$ is the fiber coordinate. Also, we see that $T$ is the generator of the circular action
 $(z_{1},z_{2})\rightarrow (e^{i\psi}z_{1}, e^{i\psi}z_{2})$ with period $2\pi$, that is,
\[T=\frac{\partial }{\partial \psi}.\]

\begin{prop}\label{exse}
For $f\in H_{p,0},\ f_{1}=e^{i(p-2)\psi}H(w,\bar{w})$ and $g\in H_{0,p}\ g_{\bar 1}=e^{-i(p-2)\psi}G(w,\bar{w})$, so in particular
both $|f_{1}|$ and $|g_{\bar 1}|$ does not depend on the fiber coordinate $\psi$.\\
 For $\phi\in C^{\infty}(S^3)$, we have the following Fourier expansion
of $\phi$ with respect to $\psi$
\[\phi=\sum_{p,q=0}^{\infty}\phi_{pq}=\sum_{m\in Z}e^{im\psi}\phi_{m}(w,\bar{w}),\]
where $m=p-q$ and $\phi_{m}$ is a function only defined on $S^2$.
\end{prop}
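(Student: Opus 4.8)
The strategy is to read off everything from the $U(1)$-action generated by $T=\partial/\partial\psi$ in the Hopf fibration $S^3\to CP^1$. Since $z_1=\zeta=|\zeta|e^{i\psi}$ and $z_2=\zeta w=|\zeta|e^{i\psi}w$, with both $w$ and $|\zeta|=e^{-H(w)/2}$ independent of $\psi$, we have $Tz_j=iz_j$ and $T\bar z_j=-i\bar z_j$; hence a monomial $z_1^az_2^b\bar z_1^c\bar z_2^d$ of bidegree $(p,q)$ satisfies $T(z_1^az_2^b\bar z_1^c\bar z_2^d)=i(p-q)z_1^az_2^b\bar z_1^c\bar z_2^d$. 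Consequently $Tu=i(p-q)u$ for every $u\in P_{p,q}$, in particular for $u\in H_{p,q}$. I will also use the elementary remark that a smooth function $h$ on $S^3$ with $Th=imh$, $m\in\Z$, can be written uniquely as $h=e^{im\psi}\tilde h$ with $\tilde h$ annihilated by $T$, i.e.\ (in the affine chart $z_1\neq0$) a function of $(w,\bar w)$ alone; and then $T|h|^2=(Th)\bar h+h\overline{Th}=0$, so $|h|$ does not depend on $\psi$.

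For the first assertion, recall from the Remark following the (BE) condition that $P_{p,0}=H_{p,0}$ and, by conjugation, $P_{0,p}=H_{0,p}$; so $H_{p,0}$ is spanned by the monomials $z_1^az_2^b$ with $a+b=p$ and $H_{0,p}$ by the monomials $\bar z_1^c\bar z_2^d$ with $c+d=p$. A direct computation with $Z_1=\bar z_2\,\partial_{z_1}-\bar z_1\,\partial_{z_2}$ shows $Z_1$ carries each such holomorphic monomial into $P_{p-1,1}$, and $Z_{\bar 1}=\overline{Z_1}$ carries each anti-holomorphic monomial into $P_{1,p-1}$; equivalently, one has the commutation relations $[T,Z_1]=-2iZ_1$ and $[T,Z_{\bar 1}]=2iZ_{\bar 1}$ on the standard $S^3$ (these follow from $[T,Z_1]=\nabla_TZ_1-\nabla_{Z_1}T-\tau(T,Z_1)=\theta_1{}^1(T)Z_1$ together with $\theta_1{}^1(T)=-2i$ and the vanishing of the torsion). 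Therefore, for $f\in H_{p,0}$, $Tf_1=TZ_1f=Z_1(Tf)+[T,Z_1]f=i(p-2)f_1$, and for $g\in H_{0,p}$, $Tg_{\bar 1}=TZ_{\bar 1}g=Z_{\bar 1}(Tg)+[T,Z_{\bar 1}]g=-i(p-2)g_{\bar 1}$. By the remark of the first paragraph, $f_1=e^{i(p-2)\psi}H(w,\bar w)$ and $g_{\bar 1}=e^{-i(p-2)\psi}G(w,\bar w)$ for functions $H,G$ on the base, and in particular $|f_1|$ and $|g_{\bar 1}|$ are $\psi$-independent.

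For the second assertion, I expand $\phi\in C^\infty(S^3)$ in bigraded spherical harmonics, $\phi=\sum_{p,q\geq 0}\phi_{pq}$ with $\phi_{pq}\in H_{p,q}$, the series converging in $C^\infty(S^3)$; this is the standard spherical-harmonic (Peter--Weyl) decomposition on $S^3$. Grouping the summands by their common $T$-weight $m=p-q$ and setting $\phi^{(m)}=\sum_{p-q=m}\phi_{pq}$, each $\phi^{(m)}$ satisfies $T\phi^{(m)}=im\phi^{(m)}$, hence $\phi^{(m)}=e^{im\psi}\phi_m(w,\bar w)$ with $\phi_m$ independent of $\psi$, i.e.\ a function on $S^2$. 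Summing over $m$ gives $\phi=\sum_{m\in\Z}e^{im\psi}\phi_m(w,\bar w)$, which is the asserted expansion.

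The only point requiring care is the determination of the exact exponents, i.e.\ the relation $[T,Z_1]=-2iZ_1$ (equivalently, that $Z_1$ lowers the $T$-weight by $2$); everything else is bookkeeping of $U(1)$-weights together with the standard fact that the spherical-harmonic expansion of a smooth function converges in $C^\infty$, which legitimizes the regrouping. One should also bear in mind that ``$\phi_m$ is a function on $S^2$'' is literally accurate only in the affine chart $z_1\neq 0$ --- globally $\phi_m$ is a section of a line bundle over $CP^1$ --- but this suffices for the $\psi$-Fourier analysis carried out in the sequel.
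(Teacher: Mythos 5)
Your argument is correct, but it proceeds by a genuinely different mechanism than the paper's. The paper proves both assertions by brute-force substitution: it writes $f=\sum_{a+b=p}c_{ab}z_{1}^{a}z_{2}^{b}$, applies $Z_{1}=\bar{z}_{2}\partial_{z_{1}}-\bar{z}_{1}\partial_{z_{2}}$ term by term, and then passes to the blow-up coordinates $\zeta=z_{1}$, $w=z_{2}/z_{1}$ to read off the factor $\zeta^{p-1}\bar{\zeta}=|\zeta|^{p}e^{i(p-2)\psi}$ explicitly (and similarly substitutes coordinates into each monomial of $\phi_{pq}$ to exhibit $e^{i(p-q)\psi}$). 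You instead organize everything by $U(1)$-weight under $T=\partial/\partial\psi$: the identity $Tu=i(p-q)u$ on $P_{p,q}$, the commutator $[T,Z_{1}]=-2iZ_{1}$ (which you correctly justify both from the monomial computation and from $\theta_{1}{}^{1}(T)=-2i$ with vanishing torsion), and the elementary fact that $Th=imh$ forces $h=e^{im\psi}\tilde{h}$ with $\tilde{h}$ basic. Your route is more structural and makes transparent \emph{why} the exponent is $p-2$ (the operator $Z_{1}$ lowers the $T$-weight by $2$), and it generalizes immediately to higher derivatives; the paper's computation is more pedestrian but produces the explicit base functions $H(w,\bar{w})$, $\phi_{m}(w,\bar{w})$ in closed form. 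You are also right to flag the two points the paper glosses over: the $C^{\infty}$ convergence of the bigraded spherical-harmonic expansion, which legitimizes regrouping by $m=p-q$, and the fact that $\phi_{m}$ is only a function in the affine chart (globally a section over $CP^{1}$) --- neither affects the use made of the proposition in Proposition \ref{keyf}.
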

\begin{proof}
If $f\in H_{p,0}$, say, $f=\sum_{a+b=p}c_{ab}z_{1}^{a}z_{2}^{b}$ then
\begin{equation}
\begin{split}
f_{1}&=Z_{1}f=(\bar{z}_{2}\frac{\partial }{\partial z_{1}}-\bar{z}_{1}\frac{\partial }{\partial z_{2}})f\\
&=\sum_{a+b=p}c_{ab}(az_{1}^{a-1}z_{2}^{b}\bar{z}_{2}-bz_{1}^{a}z_{2}^{b-1}\bar{z}_{1})\\
&=\sum_{a+b=p}c_{ab}(a\zeta^{a+b-1}\bar{\zeta}w^{b}\bar{w}-b\zeta^{a+b-1}\bar{\zeta}w^{b-1})\\
&=\zeta^{a+b-1}\bar{\zeta}\left(\sum_{a+b=p}c_{ab}(aw^{b}\bar{w}-bw^{b-1})\right).
\end{split}
\end{equation}
Since on $S^3,\ 1=|\zeta|^{2}e^{H(w)}$, we have
\begin{equation}
\begin{split}
|f_{1}|=&|\zeta|^{a+b}\left|\sum_{a+b=p}c_{ab}(aw^{b}\bar{w}-bw^{b-1})\right|\\
&=e^{-\frac{a+b}{2}H(w)}\left|\sum_{a+b=p}c_{ab}(aw^{b}\bar{w}-bw^{b-1})\right|,
\end{split}
\end{equation}
which does not depend on $\psi$.
On the other hand, for $\phi\in C^{\infty}(S^3)$, it has the Fourier representation $\phi=\sum_{p,q=0}^{\infty}\phi_{pq}$. We would like to express it by the coordinates
$(\zeta, w)$. We denote $\phi_{pq}=\sum_{a+b=p,c+d=q}c_{abcd}z_{1}^{a}z_{2}^{b}\bar{z}_{1}^{c}\bar{z}_{2}^{d}$. Then
\begin{equation}
\begin{split}
\phi_{pq}&=\sum_{a+b=p,c+d=q}c_{abcd}\zeta^{a+b}w^{b}\bar{\zeta}^{c+d}\bar{w}^{d}\\
&=\sum_{a+b=p,c+d=q}c_{abcd}e^{i(p-q)\psi}|\zeta|^{p+q}w^{b}\bar{w}^{d}\\
&=\sum_{a+b=p,c+d=q}c_{abcd}e^{i(p-q)\psi}e^{\frac{-(p+q)H(w)}{2}}w^{b}\bar{w}^{d}\\
&=e^{im\psi}\phi_{m}(w,\bar{w}),
\end{split}
\end{equation}
where $m=p-q$ and
$\phi_{m}=\left(\sum_{a+b=p,c+d=q}c_{abcd}e^{\frac{-(p+q)H(w)}{2}}w^{b}\bar{w}^{d}\right)$.
\end{proof}

\begin{prop}\label{keyf}
(a) Let $\phi\in C^{\infty}(S^3)$, then for $\phi$ satisfying (BE)
\[\int_{S^3}(k|\phi|^{2}-E\bar{\phi})|f^{k}_{1}|^{2}\geq \int_{S^3}|\phi|^{2}|f^{k}_{1}|^{2},\ \ k\geq 1;\]
\[\int_{S^3}(k|\phi|^{2}-E\bar{\phi})|g^{k}_{\bar 1}|^{2}\geq \int_{S^3}|\phi|^{2}|g^{k}_{\bar 1}|^{2},\ \ k\geq 1.\]
(b) For any $\phi\in P_{p_{1},q_{1}}$,
\begin{equation*}
\int_{S^3}(k|\phi|^{2}-E\bar{\phi})f^{k}_{1}\overline{f^{l}_{1}}=\left\{ \begin{array}{ll}
0&,k\neq l\\
\int_{S^3}(k+p_{1}-q_{1}-4)|\phi|^{2}|f^{k}_{1}|^{2}&,k=l.
\end{array}
\right.\end{equation*}
\begin{equation*}
\int_{S^3}(k|\phi|^{2}-E\bar{\phi})g^{k}_{\bar 1}\overline{g^{l}_{\bar 1}}=\left\{ \begin{array}{ll}
0&,k\neq l\\
\int_{S^3}(k+p_{1}-q_{1}-4)|\phi|^{2}|g^{k}_{\bar 1}|^{2}&,k=l.
\end{array}
\right.\end{equation*}
\end{prop}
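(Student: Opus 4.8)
The plan is to transfer every integral over $S^{3}$ to the base $CP^{1}$ of the Hopf fibration and turn it into a Fourier series in the fibre coordinate $\psi$. By Proposition~\ref{exse} one may write, for $f^{k}\in H_{k,0}$ and $g^{k}\in H_{0,k}$,
\[
f^{k}_{1}=e^{i(k-2)\psi}a_{k}(w,\bar w),\qquad g^{k}_{\bar 1}=e^{-i(k-2)\psi}b_{k}(w,\bar w),
\]
with $a_{k},b_{k}$ functions on $CP^{1}$, so that $f^{k}_{1}\overline{f^{l}_{1}}=e^{i(k-l)\psi}a_{k}\overline{a_{l}}$ and $g^{k}_{\bar 1}\overline{g^{l}_{\bar 1}}=e^{i(l-k)\psi}b_{k}\overline{b_{l}}$; in particular $|f^{k}_{1}|^{2}$ and $|g^{k}_{\bar 1}|^{2}$ do not depend on $\psi$. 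Next I would expand $\phi=\sum_{m\in\Z}e^{im\psi}\phi_{m}(w,\bar w)$ as in Proposition~\ref{exse}, and using $\phi_{0}=T\phi=\partial_{\psi}\phi$ record
\[
E=4\phi+i\phi_{0}=\sum_{m}(4-m)e^{im\psi}\phi_{m},\qquad
k|\phi|^{2}-E\bar\phi=\sum_{m,m'}(k-4+m)\,e^{i(m-m')\psi}\,\phi_{m}\overline{\phi_{m'}} .
\]
Finally I would use (\ref{jac}), which exhibits $\theta\wedge d\theta=i\frac{\partial^{2}H(w)}{\partial w\,\partial\bar w}\,d\psi\wedge dw\wedge d\bar w$ with $i\frac{\partial^{2}H(w)}{\partial w\,\partial\bar w}\,dw\wedge d\bar w$ a positive measure $d\mu$ on $CP^{1}$, so that $\int_{S^{3}}=\int_{CP^{1}}\big(\int_{-\pi}^{\pi}\,\cdot\,d\psi\big)\,d\mu$ and the fibre integral kills every summand of nonzero total $\psi$-frequency.

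For part (a) I would take $k=l$. Condition (BE) forces $\phi_{m}\equiv 0$ for $m\le 3$, since each $\phi_{pq}\in H_{p,q}$ contributes only to the mode $m=p-q$. Because $|f^{k}_{1}|^{2}$ carries no $\psi$, integrating the expansion of $k|\phi|^{2}-E\bar\phi$ against it over the fibre keeps only the diagonal $m=m'$:
\[
\int_{S^{3}}(k|\phi|^{2}-E\bar\phi)\,|f^{k}_{1}|^{2}
=2\pi\int_{CP^{1}}\Big(\sum_{m\ge 4}(k-4+m)|\phi_{m}|^{2}\Big)|f^{k}_{1}|^{2}\,d\mu .
\]
For $m\ge 4$ and $k\ge 1$ we have $k-4+m\ge k\ge 1$, so the bracket dominates $\sum_{m\ge 4}|\phi_{m}|^{2}$ pointwise; integrating against the nonnegative $|f^{k}_{1}|^{2}\,d\mu$ and comparing with the same computation applied to $\int_{S^{3}}|\phi|^{2}|f^{k}_{1}|^{2}$ gives the first inequality, and replacing $a_{k}$ by $b_{k}$ (i.e. $f^{k}_{1}$ by $g^{k}_{\bar 1}$) gives the second.

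For part (b) I would invoke the fact recalled after the definition of (BE) that $P_{p_{1},q_{1}}$ is a sum of spaces $H_{p,q}$ all with $p-q=p_{1}-q_{1}$; hence every harmonic component of $\phi\in P_{p_{1},q_{1}}$ is a pure fibre mode, and $\phi=e^{i(p_{1}-q_{1})\psi}\Phi(w,\bar w)$. Then $|\phi|^{2}=|\Phi|^{2}$ and $E\bar\phi=(4-(p_{1}-q_{1}))|\Phi|^{2}$ carry no $\psi$, so $k|\phi|^{2}-E\bar\phi=(k+p_{1}-q_{1}-4)|\phi|^{2}$. Substituting this together with $f^{k}_{1}\overline{f^{l}_{1}}=e^{i(k-l)\psi}a_{k}\overline{a_{l}}$, the fibre integral $\int_{-\pi}^{\pi}e^{i(k-l)\psi}d\psi$ equals $2\pi$ when $k=l$ and $0$ otherwise, which yields the stated dichotomy; the identity for $g$ follows identically from $g^{k}_{\bar 1}\overline{g^{l}_{\bar 1}}=e^{i(l-k)\psi}b_{k}\overline{b_{l}}$.

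I expect the genuine obstacle to be precisely the off-diagonal case $k\ne l$ for a general $\phi$, even one satisfying (BE): the fibre integration collapses a cross term only if $k|\phi|^{2}-E\bar\phi$ has controlled $\psi$-frequencies, whereas in general the products $\phi_{m}\overline{\phi_{m'}}$ realize every frequency $m-m'$ and the $k\ne l$ contributions need not cancel; this is why part (b) is restricted to $\phi\in P_{p_{1},q_{1}}$, for which $\phi$ occupies a single fibre mode. The remaining ingredients — identifying (BE) with the vanishing of the low fibre modes, and bookkeeping the Hopf volume form and its positivity — are routine, being already prepared in Proposition~\ref{exse} and formula (\ref{jac}).
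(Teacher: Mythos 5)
Your proposal is correct and follows essentially the same route as the paper: pass to the Hopf fibration, Fourier-expand $\phi$ in the fibre variable $\psi$, use that $|f^{k}_{1}|$ and $|g^{k}_{\bar 1}|$ are $\psi$-independent so only diagonal modes survive (giving the coefficient $k+m-4\geq k\geq 1$ under (BE) for part (a)), and observe that $\phi\in P_{p_{1},q_{1}}$ is a single fibre mode so that $k|\phi|^{2}-E\bar\phi=(k+p_{1}-q_{1}-4)|\phi|^{2}$ and the fibre integral of $e^{i(k-l)\psi}$ produces the dichotomy in part (b). Your closing remark about why $k\neq l$ fails for general $\phi$ matches the authors' own stated limitation.
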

\begin{proof}
We only display the proof for the first statement of part (a). From
Proposition \ref{exse}, $|f^{k}_{1}|$ is independent of the fiber
variable $\psi$. Changing variables using (\ref{jac}), we have using
the Fourier expansion of $\phi$ in Proposition \ref{exse} that
\[\phi=\sum\phi_{pq}=\sum_{m\in Z}e^{im\psi}\phi_{m}(w,\bar{w}),\ \ m=p-q.\]
Now
\[E=4\phi+i\phi_{0}.\]
Thus
\[E\bar{\phi}=(\sum_{m\in Z}e^{im\psi}(4-m)\phi_{m}(w,\bar{w}))\bar{\phi}.\]
So by Plancherel's theorem,
\begin{equation}\label{planch}
\begin{split}
\int_{S^3}(k|\phi|^{2}-E\bar{\phi})|f^{k}_{1}|^{2}&=\int_{S^2}\left(\int_{S^1}(k|\phi|^{2}-E\bar{\phi})\right)|f^{k}_{1}|^{2}\\
&=\int_{S^2}\left(\sum_{m}(k+m-4)|\phi_{m}|^{2}\right)|f^{k}_{1}|^{2}.
\end{split}
\end{equation}
The (BE) condition implies $m-4=p-q-4\geq 0,$ and since $k\geq 1$, the term above is bounded below by
\[\int_{S^2}\left(\sum_{m}|\phi_{m}|^{2}\right)|f^{k}_{1}|^{2}.\]
Using Plancherel's theorem again we obtain our result.

We only consider the case for $f^{k}_{1}$ in part (b), the case for $g^{k}_{\bar 1}$ is similar. Observe that if $\phi\in P_{p_{1},q_{1}}$, then
$|\phi|$ is independent of the fiber variable $\psi$, and since,
\begin{equation*}
\begin{split}
E\bar{\phi}&=(4\phi+i\phi_{0})=4|\phi|^{2}+i\phi_{0}\bar{\phi}\\
&=4|\phi|^{2}+(q_{1}-p_{1})|\phi|^{2}=(4+q_{1}-p_{1})|\phi|^{2}.
\end{split}
\end{equation*}
$E\bar{\phi}$ is also independent of the fiber variable $\psi$, and only depends on $w,\bar{w}$.Thus if $k\neq l$, the integrand may be written as
\[e^{\pm i(k-l)\psi}G_{\pm }(w,\bar{w}),\]
from which it immediately follows that if  $k\neq l$, the integral vanishes. When $k=l$, from the computation (\ref{planch}) in part (a), ,
\begin{equation*}
\begin{split}
\int_{S^3}(k|\phi|^{2}-E\bar{\phi})|f^{k}_{1}|^{2}&=(k+p_{1}-q_{1}-4)\sum_{m=p_{1}-q_{1}}\int_{S^2}|\phi_{m}|^{2}|f^{k}_{1}|^{2}\\
&=(k+p_{1}-q_{1}-4)\int_{S^3}|\phi|^{2}|f^{k}_{1}|^{2}.
\end{split}
\end{equation*}
We have our conclusion.
\end{proof}

\begin{proof}[{\bf The proof of Proposition \ref{main6}:}]
We put together Proposition \ref{main5} and the computation of the integrals in the right side of Proposition \ref{main5} which are done in Proposition \ref{keyf}.
The Proposition follows.
\end{proof}

We emphasize that in Proposition \ref{main6} we do not hypothesize that
$\phi$ satisfies (BE). The following corollaries are therefore
immediate consequences of Proposition \ref{main6}.

\begin{cor}
Let $\phi\in P_{p_{1},q_{1}}$. Then $<\ddot{P}_{0}^{t}|_{t=0}f,f>$ is positive for all $f\in H_{p,0}$ or $f\in H_{0,p},\ p\geq 1$ if $\phi$ satisfies condition (BE),i.e.,
$p_{1}\geq 4+q_{1}$.
\end{cor}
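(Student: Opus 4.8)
The plan is short, since the corollary is a direct specialization of Proposition~\ref{main6}. First I would observe that if $f\in H_{p,0}$ with $p\geq 1$, then in the decomposition $f=\sum_{k\geq 1}f^{k}+\sum_{k\geq 1}g^{k}$ used in Propositions~\ref{main5} and~\ref{main6} every summand vanishes except $f^{p}=f$; symmetrically, if $f\in H_{0,p}$ only $g^{p}=f$ survives. Feeding this into the lower bound of Proposition~\ref{main6} collapses the sum over $k$ to a single term and yields
\begin{equation*}
\langle\ddot{P}_{0}^{t}|_{t=0}f,f\rangle\ \geq\ 2\,(p+p_{1}-q_{1}-4)\int_{S^{3}}|\phi|^{2}|f_{1}|^{2}\qquad(f\in H_{p,0}),
\end{equation*}
and the same inequality with $|f_{1}|^{2}$ replaced by $|f_{\bar 1}|^{2}$ when $f\in H_{0,p}$, using $\ddot I_{t}(f)|_{t=0}=\langle\ddot P_{0}^{t}|_{t=0}f,f\rangle$.

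Next I would invoke the hypothesis that $\phi$ satisfies (BE), i.e.\ $p_{1}\geq q_{1}+4$: this makes the coefficient $p+p_{1}-q_{1}-4\geq p\geq 1>0$. It then remains only to check that $\int_{S^{3}}|\phi|^{2}|f_{1}|^{2}$ (resp.\ $\int_{S^{3}}|\phi|^{2}|f_{\bar 1}|^{2}$) is strictly positive for nonzero $f$. For $f\in H_{p,0}$ with $p\geq 1$ one has $f_{\bar 1}=0$ because $f$ is CR holomorphic; if moreover $f_{1}\equiv 0$, then $f_{1\bar 1}=f_{\bar 1 1}=0$, whence $\Delta_{b}f=-(f_{1\bar 1}+f_{\bar 1 1})=0$, contradicting $\Delta_{b}f=p\,f$ from (\ref{eigeq}) since $p\geq 1$ and $f\neq 0$. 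Hence $f_{1}\not\equiv 0$; being the restriction to $S^{3}$ of a polynomial in $z,\bar z$ it is real-analytic, so it is nonvanishing on a dense open subset of $S^{3}$. If the integral vanished, $\phi$ would vanish on that dense set, hence $\phi\equiv 0$ on $S^{3}$ by continuity; excluding this trivial case (where the deformation is constant and $\ddot P_{0}^{t}|_{t=0}=0$), the integral is strictly positive. The case $f\in H_{0,p}$ is identical with $Z_{1}$ and $Z_{\bar 1}$ interchanged: here $f_{1}=0$ and $\Delta_{b}f=p f$ force $f_{\bar 1}\not\equiv 0$.

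There is essentially no obstacle here — the whole content has already been extracted in Proposition~\ref{main6}. The only point requiring a moment of care is the strict positivity of $\int_{S^{3}}|\phi|^{2}|f_{1}|^{2}$, which rests on the unique-continuation fact that a nonzero real-analytic function on the connected manifold $S^{3}$ cannot vanish on a nonempty open set, together with the standing assumption that $\phi$ is a genuinely nonzero deformation. Combining the displayed inequality, the strict bound $p+p_{1}-q_{1}-4\geq 1$, and this nonvanishing gives $\langle\ddot{P}_{0}^{t}|_{t=0}f,f\rangle>0$ for every nonzero $f\in H_{p,0}\cup H_{0,p}$ with $p\geq 1$.
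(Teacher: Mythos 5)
Your proof is correct and follows the same route as the paper, which simply declares the corollary an immediate consequence of Proposition \ref{main6}: specialize the decomposition to a single $f^{p}=f$ (or $g^{p}=f$), and use (BE) to make the coefficient $p+p_{1}-q_{1}-4\geq p\geq 1$. Your additional verification that $\int_{S^{3}}|\phi|^{2}|f_{1}|^{2}>0$ (via $\Delta_{b}f=pf$ forcing $f_{1}\not\equiv 0$, real-analyticity, and the implicit assumption $\phi\not\equiv 0$) is a detail the paper leaves unstated but is needed for strict, rather than non-strict, positivity.
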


\begin{cor}
Let $\phi\in P_{p_{1},q_{1}}$ and $W$ denote the subspace of $\oplus H_{p,0}\oplus H_{0,p}$ on which $\ddot{P}_{0}^{t}|_{t=0}<0$. Then $W\subset\oplus H_{p,0}\oplus H_{0,p},$
for $p<q_{1}+4-p_{1}$.
\end{cor}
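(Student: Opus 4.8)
The plan is to read the corollary off directly from the lower bound established in Proposition~\ref{main6}. Recall that for $\phi\in P_{p_{1},q_{1}}$ and $f\in{\bf H}$, written as $f=\sum_{k\geq 1}f^{k}+\sum_{k\geq 1}g^{k}$ with $f^{k}\in H_{k,0}$, $g^{k}\in H_{0,k}$, that proposition gives
\begin{equation*}
\langle\ddot{P}_{0}^{t}|_{t=0}f,f\rangle\ \geq\ 2\sum_{k}\int_{S^3}(k+p_{1}-q_{1}-4)\,|\phi|^{2}\bigl(|f^{k}_{1}|^{2}+|g^{k}_{\bar 1}|^{2}\bigr),
\end{equation*}
and -- this is the feature that makes the corollary work -- the right-hand side is diagonal in the degree $k$, with no interaction between distinct degrees, because Proposition~\ref{keyf}(b) kills the off-diagonal integrals $\int(k|\phi|^{2}-E\bar\phi)f^{k}_{1}\overline{f^{l}_{1}}$ for $\phi\in P_{p_{1},q_{1}}$.

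First I would treat a pure spherical harmonic. If $f\in H_{p,0}$ with $p\geq q_{1}+4-p_{1}$, then only the $k=p$ term survives in the sum above, so
\begin{equation*}
\langle\ddot{P}_{0}^{t}|_{t=0}f,f\rangle\ \geq\ 2(p+p_{1}-q_{1}-4)\int_{S^3}|\phi|^{2}|f_{1}|^{2}\ \geq\ 0,
\end{equation*}
since $p+p_{1}-q_{1}-4\geq 0$; the case $f\in H_{0,p}$ is identical, with $|g_{\bar 1}|^{2}$ replacing $|f_{1}|^{2}$. Hence no nonzero $f$ lying in $H_{p,0}$ or $H_{0,p}$ with $p\geq q_{1}+4-p_{1}$ can belong to $W$.

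To obtain the statement for the subspace $W\subset\oplus H_{p,0}\oplus H_{0,p}$, I would run the same argument for general combinations: if every spherical-harmonic component of $f$ has degree $\geq q_{1}+4-p_{1}$, then every coefficient $k+p_{1}-q_{1}-4$ occurring on the right-hand side of the displayed bound is nonnegative, whence $\langle\ddot{P}_{0}^{t}|_{t=0}f,f\rangle\geq 0$. Therefore $W$ intersects $\oplus_{p\geq q_{1}+4-p_{1}}(H_{p,0}\oplus H_{0,p})$ only in $0$, which is precisely the assertion.

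Since Proposition~\ref{main6} already packages all of the analytic work -- the vanishing of the first variation, the Morse-type splitting into the stable part $D^{2}$, and the Hopf-fibration evaluation of the remaining integrals -- there is no genuine computational obstacle here. The one point that warrants care is the meaning of $W$: the second variation is only bounded below by, not equal to, the diagonal expression, the defect being the term $2\langle D^{2}f,f\rangle$, which is nonnegative (Proposition~\ref{dsp}) but not diagonal in the degree. So the cleanest formulation, and the one the argument actually delivers, is that $W$ contains no nonzero vector supported in degrees $\geq q_{1}+4-p_{1}$, rather than a statement about an honest orthogonal complement.
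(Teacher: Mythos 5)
Your derivation is exactly the one the paper intends: it states this corollary, together with the preceding one, as an ``immediate consequence'' of Proposition~\ref{main6} and gives no further argument, and your reading of the lower bound degree by degree (nonnegativity of $k+p_{1}-q_{1}-4$ for $k\geq q_{1}+4-p_{1}$, plus the vanishing of the off-diagonal terms from Proposition~\ref{keyf}(b)) is precisely that immediate consequence. Your closing caveat about the correct interpretation of $W$ --- that the inequality only shows $W$ contains no nonzero vector supported in degrees $\geq q_{1}+4-p_{1}$ --- is a fair and accurate gloss on the paper's somewhat loose phrasing, not a deviation from its argument.
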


\begin{rem}
The reader may verify by making an explicit calculation for $\phi_{t}=t$, using the above formula, that is in Rossi's example, we have
\[<\ddot{P}_{0}^{t}|_{t=0}f,f>\ <0,\ \ \textrm{for}\ \ f=z_{1,}\ z_{2},\ \bar{z}_{1},\ \bar{z}_{2}.\]
\end{rem}

%\bibliography{main}
\bibliographystyle{plain}

\end{document}